\theoremstyle{plain}
\numberwithin{equation}{section}
\newtheorem{theorem}{Theorem}[section]
\newtheorem{proposition}[theorem]{Proposition}
\newtheorem{lemma}[theorem]{Lemma}
\newtheorem{remark}[theorem]{Remark}
\DeclarePairedDelimiterX\intff[2]{[}{]}{#1,#2}
\DeclarePairedDelimiterX\intfo[2]{[}{)}{#1,#2}
\DeclarePairedDelimiterX\intof[2]{(}{]}{#1,#2}
\DeclarePairedDelimiterX\intoo[2]{(}{)}{#1,#2}
\DeclarePairedDelimiter{\pars}{(}{)}
\DeclarePairedDelimiter{\bracks}{[}{]}
\DeclarePairedDelimiter{\absolute}{|}{|}
\DeclarePairedDelimiterX{\setof}[2]{\lbrace}{\rbrace}{#1\,{:}\,#2}
\DeclarePairedDelimiterX{\bracksof}[2]{[}{]}{#1\,\delimsize\vert\,#2}
\DeclarePairedDelimiterX{\parsof}[2]{(}{)}{#1\,\delimsize\vert\,#2}
\DeclarePairedDelimiterXPP\lnorm[2]{}\lVert\rVert{_{#1}}{#2}
\def\n{\mathbb N}
\def\z{\mathbb Z}
\def\r{\mathbb R}
\def\e{\mathbb E}
\def\p{\mathbb P}
\def\P{\mathbf P}
\def\E{\mathbf E}
\def\RR{\mathfrak R}
\def\BScap{\mbox{\rm BScap}}
\def\Bcap{\mbox{\rm Bcap}}
\def\ball{{\rm B}}
\def\T{{\mathcal T}}
\def\X{{\mathcal X}}
\def\Y{{\mathcal Y}}
\def\dd{{\rm d}}
\def\brwrange{{\mathfrak R}}
\renewcommand{\Cap}{\mathrm{Cap}}
\newcommand{\rd}{\mathrm{d}}
\newcommand{\anc}{\mathrm{anc}}
\newcommand{\bt}{\mathbf{t}}
\newcommand{\cU}{\mathcal{U}}
\newcommand{\QQ}{\Upsilon}
\newcommand{\cnbd}[2]{{{#1}^{#2}}}
\def\brwrange{{\mathscr R}}
\def\brwm{M_\theta}
\newcommand{\ttree}[1]{\mathcal T_{#1}}
\newcommand{\tbrw}[2]{V_{#1}}
\newcommand{\httree}[1]{\widehat{\mathcal T}_{#1}}
\newcommand{\htbrw}[2]{\widehat{V}_{#1}}
\newcommand{\ottree}[1]{\protect\overleftarrow{\mathcal T_{#1}}}
\newcommand{\rwp}{P^{(\xi)}}
\newcommand{\rwe}{E^{(\xi)}}
\def\wcvn{\, {\stackrel[n\to\infty]{\text (law)}{\longrightarrow}}\, }
\def\diam{{\rm diam}}
\def\N{\mathcal N}
\title{Branching capacity of  a random walk in $\z^5$}
\author{Tianyi Bai}
\address{Tianyi Bai, Chinese Academy of Sciences,   China.}
\email{tianyi.bai73@amss.ac.cn}
\author{Jean-Fran\c{c}ois Delmas}
\address{Jean-Fran\c{c}ois Delmas, CERMICS,
Ecole des Ponts,  France}
\email{delmas@cermics.enpc.fr}
\author{Yueyun Hu}
\address{Yueyun Hu, 
LAGA, Universit\'e Paris XIII,  93430 Villetaneuse,
France}
\email{yueyun@math.univ-paris13.fr}
\begin{document}

\begin{abstract}  We are  interested in  the branching  capacity of  the
  range     of     a     random     walk     in     $\z^d$.     Schapira
  \cite{schapira2023branching} has recently obtained precise asymptotics
  in the  case $d\ge 6$ and  has demonstrated a transition  at dimension
  $d=6$.  We  study the  case  $d=5$  and  prove that  the  renormalized
  branching capacity converges in law  to the Brownian snake capacity of
  the range of a  Brownian motion. The main step in  the proof relies on
  studying the intersection probability between  the range of a critical
  Branching  random  walk  and  that  of a  random  walk,  which  is  of
  independent interest.
\end{abstract}

\subjclass[2010]{60F05, 60J45, 60J80}

\keywords{Branching capacity, range, random walk, critical branching random walk.}

\maketitle

\section{Introduction}

  Let $(\xi_n)_{n\ge 0}$ be a centered, aperiodic and irreducible  random walk in $\z^d$ with finite second moment, whose law is denoted by $\rwp$. Denote by $\xi[0,n]:= \{\xi_0, ..., \xi_n\}$ the range of $\xi$ up to time $n$. 
Studying the range $\xi[0, n]$ is a classical problem in probability theory. 

Concerning the asymptotics of  the size of the range $\#\xi[0,n]$, it is well-known  (Dvoretzky and Erd\H{o}s \cite{DE51})   that there is a transition at dimension $d=2$: $\#\xi[0,n]$ is of order $n$ when $d\ge 3$ and of order $\frac{n}{\log n}$ when $d=2$;  we refer to Jain and Orey \cite{JO73},  Jain and Pruitt \cite{JP71}, and Le Gall \cite{LG86a, LG86b} for deep studies on the size of the range.

The  capacity of  the range  depends on  its geometry  and has  recently
attracted significant  interest. The discrete Newtonian  capacity can be
defined as  follows.  Let $d\ge  3$ and $K  \subset \z^d$ be  a nonempty
finite  set.  Let  ${\mathcal S}$  be the  range of  a simple  symmetric
random  walk  on $\z^d$  starting  from  $x$  whose  law is  denoted  by
${P}_x$. The discrete Newtonian capacity of $K$, ${\rm Cap}(K)$, can be
defined   for    $d\ge   3$   (up   to    a   multiplicative   constant)
as  \begin{equation}  {\rm  Cap}(K):= \lim_{x  \in  \z^d,\,  x\to\infty}
  |x|^{d-2} {P}_x({\mathcal S}\cap K\neq \emptyset), \label{def-cap}
 \end{equation}
 
 \noindent  where $|x|$  denotes the  usual  Euclidean norm  of $x$  and
 $x\to\infty$ means  that $|x|\to \infty$.  Following the works  of Jain
 and Orey \cite{JO68}, Asselah, Schapira and Sousi \cite{ASS18, ASS19},
 and Chang \cite{chang17}, it is  known that ${\rm Cap}(\xi[0,n])$ is of
 order $n$ when $d\ge 5$, of order $\frac{n}{\log n}$ when $d=4$, and of
 order  $\sqrt{n}$ when  $d=3$. This  implies a  transition at  dimension
 $d=4$.  See   Asselah,  Schapira,   and  Sousi   \cite{AS20},  Schapira
 \cite{schapira20}  and the  references  therein for  the central  limit
 theorems, and Dembo and Okada \cite{DO22+} for the laws of the iterated
 logarithm. Additionally, Asselah and Schapira \cite{AS20+} investigated
 the link  between the capacity and  the folding phenomenon of  a random
 walk, and Hutchcroft and Sousi \cite{HS23} explored the capacity of the
 loop-erased random walk.

 Recently,   Zhu  \cite{zhu2016critical}   introduced  the   concept  of
 branching capacity. The basic idea is  to replace the range of a simple
 random walk  ${\mathcal S}$  in \eqref{def-cap} by  that of  a critical
 branching  random walk.  Specifically,  let $\ttree  c$  be a  critical
 Galton-Watson   tree   with   offspring   distribution   $\mu$,   where
 $\mu=(\mu(i))_{i\ge 0}$ is a probability distribution on $\n$ such that
 $\sum_{i=0}^\infty  i   \mu(i)=1$.  To  avoid  triviality,   we  assume
 $\mu(1)< 1$.  In other words, $\ttree  c$ is a finite  random tree that
 starts  with one  particle  $\varnothing$, called  the  root, and  each
 particle independently produces a  random number of offspring according
 to $\mu$. The critical branching random walk on $\mathbb{Z}^d$, denoted
 by $V_c$, is  a random walk indexed by the  tree $\ttree c$ constructed
 as   follows.   Let  $\theta$   be   a   probability  distribution   on
 $\mathbb{Z}^d$,   representing   the   common   distribution   of   the
 displacements of $V_c$.  For each edge $e$ of $\ttree  c$, we assign an
 independent random  variable $X_e$  with distribution $\theta$.  We set
 $V_c(\varnothing):=x\in            \z^d$,            and            for
 $u\in \ttree  c \backslash \{\varnothing\}$, $V_c(u)=x+  \sum_{e} X_e$,
 where the sum is taken over all  edges $e$ belonging to the simple path
 in $\ttree c$ connecting $u$ to $\varnothing$. The range of $\tbrw c x$
 is denoted by \begin{equation} \brwrange_c:=\{V_c(u), u\in \ttree c\}\,
   \subset \, \z^d. \label{def-Rc}\end{equation}

 \noindent Denote  by $\mathbf  P_x$ the  law of $\tbrw  c x$  and write
 $\P=\P_0$.  Almost surely  the random tree $\ttree c$ is  finite, so is
 the range $\brwrange_c$.

 For $x\in \r^d$, let  $\ball(x,r):= \{y \in \r^d: |y-x| <r\}$ be the open ball centered at $x$ with radius $r> 0$.
We assume that $d\ge 5$, and for some $q>4$,  \begin{align}
& \text{$\mu$ has mean $1$ and variance $\sigma^2\in (0, \infty)$},
\label{hyp-tree} 
\\
\label{hyp-brw} 
& \text{$\theta$ is symmetric, irreducible with covariance matrix $\brwm$
                   and}\\
&   \nonumber         \text{there exists a finite constant $c$ such that for all $r>0$:}  \quad \theta\big(\ball(0, r)^c\big) \le c\, r^{-q}.
\end{align} 
The last condition in~\eqref{hyp-brw}  is in particular satisfied when $\theta$ has  a  finite $q$-th moment.

Denote by $P_x$ the law of a $\theta$-walk $(S_n)_{n\ge 0}$ on $\z^d$ started from $x$, meaning that the random walk $S$ has the step distribution $\theta$.  The Green function of $(S_n)$ is given by $g(x,y):=g(x-y)$ for any $x, y \in \z^d$, and 
\begin{equation}\label{c_g}
g(x):=\sum_{n=0}^\infty P_0(S_n=x)\sim 
c_{g}\, |x|_\theta^{2-d}\quad\text{as}\quad
 x\to \infty, \end{equation}
with 
\[
|x|_\theta:=  (x^T\brwm^{-1} x)^{1/2}  \quad \mbox{and} \quad c_{g}:= 
\frac{\Gamma(\frac {d-2} 2)}{2\pi^{d/2} \sqrt{\det\brwm}},
\] where the equivalence in \eqref{c_g} is given by  Uchiyama \cite[Theorem 2]{MR1625467}.
Let $K\subset \z^d$ be a nonempty finite set. By Zhu \cite{zhu2016critical}, when \eqref{hyp-brw} holds with $q=d$,  the following limit exists and is called the branching capacity of $K$:
\begin{equation}
  \label{def-bcap}
  \Bcap(K):= \lim_{x\in \z^d, \, x \to\infty} \frac{\P_x(\brwrange_c \cap K\neq \emptyset)}{g(x)}.
\end{equation}
Note that $\Bcap(\cdot)$ viewed as a set function is non-decreasing, invariant under translations, and strictly positive. We extend $\Bcap$ into a Choquet capacity on $\r^d$ by letting $\Bcap(A):= \Bcap(A\cap \z^d)$ for any $A\subset \r^d$.

The  branching  capacity,   studied  in  a  series  of   papers  by  Zhu
\cite{zhu2016critical,  zhu-2,  zhu2021critical},   has  also  been  the
subject  of  some  more  recent   works.   Asselah,  Shapira  and  Sousi
\cite{asselah2023local} have  shown that $\Bcap(K)$ can  be compared, up
to two positive  constants, with the discrete Riesz  capacity with index
$d-4$,  and have  revealed a  deep relationship  between the  local time
spent in a ball by the  branching random walk and the branching capacity
of this ball. Moreover, in another work, Asselah, Okada, Shapira and
Sousi  \cite{AOSS} have
demonstrated the comparability  of $\Bcap(K)$ with the  average limit of
the  size of  the Minkowski  sum of  $K$ and  two independent  copies of
$(S_n)$,  as  well as  with  the  hitting  probability  of $K$  by  this
Minkowski sum.  See also \cite{bdh-23+} for further references.

In \cite{bdh-23+}  we also proved  the vague convergence of  the scaling
limit of  $\Bcap$ towards  the capacity related  to the  Brownian snake,
denoted  by $\BScap$.  More  precisely, consider  the excursion  measure
$\mathcal  N_x$   given  by  the   distribution  of  a   Brownian  snake
$(W_t)_{t\ge 0}$ started  from $x\in \r^d$ whose lifetime process  is an It\^{o}
excursion.  Denote by $\mathfrak R$ the range of the Brownian snake, see
Le Gall  \cite[Chapter IV]{LeGall1999} for the  precise definitions.  It
was shown  in \cite{bdh-23+} that when  $d\ge 5$, for any  bounded Borel
set $A \subset \r^d$, the following limit exists and is finite:
\begin{equation}     \BScap(A):=\lim_{x\to\infty} |x|^{d-2}  \N_x(\RR\cap A\neq \emptyset).  \label{def-bscap}
 \end{equation}

\noindent We call $\BScap(A)$ the Brownian snake capacity of $A$.

The vague convergence in \cite[Theorem 1.4]{bdh-23+} says that for $d\ge 5$ and for any compact set $K\subset \r^d$ such that \begin{equation}  \BScap(\mathring K)= \BScap(K), \label{K-regular}  \end{equation} where $\mathring K$ denotes the interior of $K$, we have, under  \eqref{hyp-tree} and  \eqref{hyp-brw} with $q=d$, \begin{equation} \lim_{n\to\infty} \frac{\Bcap(\sqrt{n} K)}{n^{(d-4)/2}} = c_\theta \, \BScap(M_\theta^{-1/2} K),   \label{convergence-Kn}  \end{equation} 
 \noindent with 
 \begin{equation}  c_\theta:= \frac{2}{\sigma^2 c_g}= \frac {4\pi^{d/2}\, \sqrt{\det\brwm}}
{\sigma^2\, {\Gamma(\frac{d-2}{2})}}\cdot  \label{def-ctheta} \end{equation} 

We choose a renormalization of $n^{1/2}$ in \eqref{convergence-Kn} to ensure consistency with the choice of $K_n$ below.
The condition \eqref{K-regular} is in particular satisfied when $K$ is the closure of a bounded open set with Lipschitz boundary, see \cite[Proposition 1.3]{bdh-23+}.
It is a natural problem to investigate the branching capacity of (random) compact set  $K$ which does not satisfy \eqref{K-regular}. In this paper we consider the case $\sqrt{n} K_n:=  \xi[0,n]$ of the range of the random walk $(\xi_n)_{n\ge 0}$, which was recently studied by Schapira \cite{schapira2023branching}:  when both $(S_n)$ and $(\xi_n)$ are  simple symmetric random walks, the following asymptotics  hold: 
\begin{equation} \begin{aligned} 
&\lim_{n\to\infty} \frac1{n} \Bcap(\xi[0,n])  \text{ exists almost surely and is positive when $d\ge 7$};
\\
&\lim_{n\to\infty} \frac{\log n}{n} \Bcap(\xi[0,n]) = \frac{2 \pi^3}{27 \sigma^2} \text{ in probability when $d=6$};
\\
&\rwe[\Bcap(\xi[0,n])]  \text{ is of order $n^{1/2}$ when $d=5$.}
\end{aligned} 
 \end{equation}

\medskip

We aim to give a sharp result in dimension $d=5$.  Assume that  
\begin{equation}\label{hyp-xi}
\begin{aligned}
&(\xi_n)_{n\ge0}\, \text{ is aperiodic  irreducible and}\quad \rwe_0[{|\xi_1|}^3]<\infty,\\
&\rwe_0[\xi_1]=0 \quad\text{and}\quad
 \xi_1 \text{ has covariance matrix } M_\xi,
\end{aligned}
\end{equation}

\noindent where $\rwp_x$ means that the random walk
$(\xi_n)$  starts  from  $x\in  \z^d$. Let  $(\beta_t)_{t\ge  0}$  be  a
standard Brownian motion  in $\r^d$. For a real $d\times  d$ matrix $M$,
we define  $M \beta[0,1]:=\setof{M  \beta_t}{0\le t\le 1}$.  Notice that
$K_n= n^{-1/2} \xi[0,n]$  converges in law, for  the Hausdorff distance,
to $M_\xi^{1/2} \beta[0,1]$.

\begin{theorem} \label{t:L1} 
Let $d=5$. Assume \eqref{hyp-tree},  \eqref{hyp-brw} with $q=5$,  and \eqref{hyp-xi}. We have 
\[ 
\frac{\Bcap(\xi[0, n])}{\sqrt{n}} \, \wcvn\, \frac{8\pi^2 \sqrt{\det\brwm}}{\sigma^2}\,  \BScap(M_\theta^{-1/2}\,M_\xi^{1/2}\beta[0,1]).
\] Moreover, $\BScap(M_\theta^{-1/2}\, M_\xi^{1/2}\beta[0,1])$ is almost surely positive. 
\end{theorem}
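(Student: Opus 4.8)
The proof will combine three ingredients: (i) the intersection-probability estimate between a critical branching random walk and a random walk that the abstract advertises as the technical heart of the paper; (ii) the scaling-limit picture, in which $K_n=n^{-1/2}\xi[0,n]$ converges in Hausdorff distance to $M_\xi^{1/2}\beta[0,1]$ and, simultaneously, the branching random walk started far away rescales to a Brownian snake; and (iii) a first-moment/second-moment comparison to upgrade convergence in probability (of a suitably normalized hitting functional) into convergence in law of $\Bcap(\xi[0,n])/\sqrt n$, together with an approximation argument handling the fact that $M_\xi^{1/2}\beta[0,1]$ is a.s.\ not regular in the sense of~\eqref{K-regular}.

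\textbf{Step 1: reduce to a hitting functional.}
By definition~\eqref{def-bcap}, $\Bcap(\xi[0,n])=\lim_{x\to\infty} g(x)^{-1}\P_x(\brwrange_c\cap \xi[0,n]\neq\emptyset)$, so conditionally on the walk $(\xi_j)_{0\le j\le n}$ we must control the probability that an independent branching random walk starting from a far-away point $x$ hits the range. Writing $x=\sqrt n\,y$ and using the Green-function asymptotics~\eqref{c_g}, the natural object is
\[
\Psi_n(y):=n^{(d-2)/2}\,g(\sqrt n\,y)^{-1}\,c_g\,|y|_\theta^{d-2}\,
\frac{\P_{\sqrt n\,y}(\brwrange_c\cap\xi[0,n]\neq\emptyset)}{?}\,,
\]
but more usefully one shows, using the intersection estimate of the paper, that for $|y|$ bounded away from $0$ and $\infty$,
\[
n^{-1/2}\,|y|_\theta^{d-2}\,\frac{n^{(d-2)/2}}{c_g}\,\P_{\sqrt n\,y}\!\bigl(\brwrange_c\cap\xi[0,n]\neq\emptyset\bigr)
\]
converges, after integrating out the starting point as in the definition of capacity, to the Brownian-snake hitting probability $\N_{M_\theta^{1/2}y}\bigl(\RR\cap M_\xi^{1/2}\beta[0,1]\neq\emptyset\bigr)$, jointly with the convergence $K_n\to M_\xi^{1/2}\beta[0,1]$. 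This is where the scaling of critical branching random walk to the Brownian snake (Le Gall's invariance principle) is invoked, and where the moment hypothesis~\eqref{hyp-brw} with $q=5$ (equivalently $q=d$) is needed so that the displacement tails do not disturb the snake limit or the Green-function comparison. Taking $x\to\infty$ inside, the constant $c_\theta$ of~\eqref{def-ctheta} reappears and $c_\theta/2=8\pi^2\sqrt{\det M_\theta}/\sigma^2$ for $d=5$ matches the stated prefactor.

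\textbf{Step 2: pass from convergence in probability to convergence in law, and handle irregularity.}
The functional $K\mapsto\BScap(M_\theta^{-1/2}K)$ is a.s.\ continuous for the Hausdorff topology on the limiting law (this is the content of~\cite[Theorem 1.4]{bdh-23+} together with the fact that Brownian motion paths have, a.s., Hausdorff-regular range in the sense of~\eqref{K-regular}, which holds because $\beta[0,1]$ is a.s.\ the closure of a set whose boundary is $\BScap$-null — this needs a short polar-set argument since $\beta[0,1]$ itself has empty interior in $d=5$; rather one compares $\Bcap(\xi[0,n])$ with $\Bcap$ of a small Minkowski enlargement $\xi[0,n]^{(\varepsilon\sqrt n)}$, which \emph{is} regular, and lets $\varepsilon\to0$ after $n\to\infty$, using monotonicity of $\Bcap$ and continuity of $A\mapsto\BScap(A^{(\varepsilon)})$ as $\varepsilon\downarrow 0$). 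Combined with the uniform integrability that one extracts from a second-moment bound on $g(x)^{-1}\P_x(\brwrange_c\cap\xi[0,n]\neq\emptyset)$ (again a consequence of the intersection estimate, giving $\sup_n n^{-1}\,\rwe[\Bcap(\xi[0,n])^2]<\infty$), the convergence in probability of $\Bcap(\xi[0,n]^{(\varepsilon\sqrt n)})/\sqrt n$ together with a $\delta$-$\varepsilon$ sandwich yields convergence in law of $\Bcap(\xi[0,n])/\sqrt n$ to the asserted limit.

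\textbf{Step 3: positivity of the limit.}
It remains to show $\BScap(M_\theta^{-1/2}M_\xi^{1/2}\beta[0,1])>0$ a.s. Since $M_\theta^{-1/2}M_\xi^{1/2}$ is invertible and $\BScap$ is, by~\eqref{def-bscap} and the remarks after~\eqref{def-bcap}, strictly positive on any nonempty bounded set with positive ``capacity'', it suffices that the Brownian snake range $\RR$ started near $\infty$ hits $M_\theta^{-1/2}M_\xi^{1/2}\beta[0,1]$ with positive $\N_x$-measure for all large $|x|$. Equivalently, using the comparison from~\cite{asselah2023local} (extended to the snake side via~\eqref{def-bscap}--\eqref{convergence-Kn}) between $\BScap$ and the Riesz capacity of index $d-4=1$, one needs the $1$-dimensional Riesz capacity of $\beta[0,1]$ in $\r^5$ to be positive a.s., i.e.\ $\int_0^1\!\!\int_0^1 |\beta_s-\beta_t|^{-1}\,\dd s\,\dd t<\infty$ a.s. — which is classical (the range of $d$-dimensional Brownian motion has positive $\gamma$-Riesz capacity iff $\gamma<2$, and here $\gamma=1<2$), so an energy computation $\rwe\!\int_0^1\!\!\int_0^1 |\beta_s-\beta_t|^{-1}\,\dd s\,\dd t<\infty$ together with a $0$-$1$ law closes this step.

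\textbf{Main obstacle.}
The crux is Step~1: proving that the rescaled intersection probability between the critical branching random walk's range and the random walk's range converges to the corresponding Brownian-snake/Brownian-motion intersection probability, \emph{uniformly enough in the starting point $x$} that one may interchange the $x\to\infty$ limit (hidden in the definition of $\Bcap$) with the $n\to\infty$ scaling limit. In $d=5$ the two ranges are on the borderline of intersecting — a branching random walk range has ``dimension'' $4$ and a walk range ``dimension'' $2$, summing to $6>5$ — so the intersection is non-trivial but the relevant probabilities decay like $n^{-?}$ with delicate constants, and controlling the contribution of branching random walk excursions that travel an anomalously large distance (where~\eqref{hyp-brw} with $q=5$ is exactly sharp) is where the real work lies; this is precisely the intersection estimate the abstract flags as ``of independent interest.''
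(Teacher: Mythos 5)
Your high-level plan is recognizable — enlarge $\xi[0,n]$ to an $\varepsilon\sqrt n$-neighborhood to make the set regular, use the intersection estimate to show the enlargement changes the capacity negligibly, and compare $\BScap$ with Riesz capacity for positivity — and this is indeed the architecture of the paper's proof. But there are several concrete gaps.

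\textbf{Step 1 is not a proof.} You leave a key formula literally unfinished (the ``$?$'') and gesture at ``integrating out the starting point'' and ``taking $x\to\infty$ inside,'' which is not what makes the argument work. The paper does not re-derive a snake hitting probability from scratch. It uses two already-proved ingredients: the coupling/Skorokhod sandwich \eqref{couplingBM0} together with the vague convergence~\eqref{Bcap->BScap} to get $\p$-a.s.
\(
\limsup_\varepsilon\limsup_n \bigl| n^{-1/2}\Bcap(\xi[0,n]^{\varepsilon\sqrt n}) - c_\theta\BScap(M_\theta^{-1/2}\Theta)\bigr| = 0,
\)
and, crucially, the quantitative approximation~\eqref{p:compareBcap}, which lets one write both $\Bcap(\xi[0,n])$ and $\Bcap(\xi[0,n]^{\varepsilon\sqrt n})$ as hitting probabilities from a \emph{finite} point $x_n=\lfloor\sqrt n\,x\rfloor$ with controlled multiplicative error on the event $E_n=\{\xi[0,n]\subset B(0,\eta|x_n|)\}$. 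It is the \emph{difference} of these two hitting probabilities — i.e.\ the event ``the BRW hits the neighborhood but not the range itself'' — that Proposition~\ref{p:intersection} makes small. Your proposal never names this mechanism, so the role of the intersection estimate is not actually explained.

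\textbf{The second-moment/UI claim is spurious.} You propose extracting uniform integrability from a bound $\sup_n n^{-1}\rwe[\Bcap(\xi[0,n])^2]<\infty$. No such bound is needed, and you do not prove it. The theorem asserts convergence in law, not convergence of moments. After the coupling the neighborhood quantity converges \emph{almost surely}, and the gap to $\Bcap(\xi[0,n])$ is then controlled in probability by a single application of Markov's inequality to the nonnegative error $e_n(\varepsilon)$, which is where $I(\varepsilon,n)$ enters. Introducing a $\delta$-$\varepsilon$ sandwich plus UI is an unnecessary detour and, as written, an unverified one.

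\textbf{Constant off by a factor $2$.} You assert that $c_\theta/2 = 8\pi^2\sqrt{\det M_\theta}/\sigma^2$. In fact for $d=5$, using $\Gamma(3/2)=\sqrt\pi/2$, \eqref{def-ctheta} gives $c_\theta = 8\pi^2\sqrt{\det M_\theta}/\sigma^2$ exactly; there is no extra $1/2$. The prefactor in Theorem~\ref{t:L1} is $c_\theta$ itself.

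\textbf{Step 3 is essentially fine but imprecise about the citation chain.} The comparison between $\BScap$ and $\Cap_{d-4}$ is~\eqref{eq:BScap_Cap_d-4}, from \cite{bdh-23+}, not from \cite{asselah2023local} (which is about the discrete $\Bcap$). Your energy argument $\rwe\int_0^1\!\int_0^1|\beta_s-\beta_t|^{-1}\,\dd s\,\dd t<\infty$ does give a.s.\ positivity of $\Cap_1(\beta[0,1])$ directly (finite energy of the occupation measure already forces positive capacity, no $0$-$1$ law needed), and one should also note that $\Cap_\gamma(M\beta[0,1])>0$ iff $\Cap_\gamma(\beta[0,1])>0$ because $|Mx-My|/|x-y|$ is bounded above and below, a point the paper states and you omit.
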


\begin{remark}   
\begin{enumerate}
\item In Lemma \ref{lem:nonpoloar}, we will show that a.s.\  $\BScap(M_\theta^{-1/2}\, M_\xi^{1/2}\beta[0,1])>0$ when $d=5$, whereas it vanishes when $d\ge 6$.
\item  In the assumption \eqref{hyp-xi}, aperiodicity can be easily removed, and the third moment condition is required  in Lemma \ref{lem:reversed_intersection}.
\item In the case when both $(S_n)$ and $(\xi_n)$ are simple symmetric random walks, the limiting random variable in Theorem \ref{t:L1} becomes \[ 
\frac{\Bcap(\xi[0, n])}{\sqrt{n}} \, \wcvn\,  \frac{8\pi^2}{\sigma^2 d^{d/2}}\,  \BScap(\beta[0,1]).
\] 
\end{enumerate}
\end{remark}

\medskip
The key ingredient in the proof of Theorem \ref{t:L1} is the following estimate on the intersection probability between $(\xi_n)$ and the branching random walk $V_c$, which may be of independent interest. 

For       any      $A\subset       \r^d$       and      $r>0$,       let
$A^r:=\{x\in \r^d:  \dd(x, A)  \le r\}$  be the  closed $r$-neighborhood,
where $\dd(x, A):= \min_{y\in A} |x-y|$. For any $x\in \r^d$, we denote by
$\lfloor x\rfloor  $ the point  in $\mathbb{Z}^d$ whose  coordinates are
the      integer      parts      of     that      of      $x$,      thus
$|\lfloor x\rfloor - x| \le
\sqrt{d}$.  
We have the following estimate on the intersection probabilities between
$\brwrange_c$ and $\xi[0,n]$.

\begin{proposition}\label{p:intersection} Let $d=5$.  Assume \eqref{hyp-tree},  \eqref{hyp-brw} with $q>4$,  and \eqref{hyp-xi}. For any fixed $x\in\r^5\backslash\{0\}$ and any   $\eta\in(0,1)$,  we have 
\begin{equation}
\limsup_{\varepsilon\rightarrow 0+}\,\limsup_{n\rightarrow\infty}\,
n\, I(\varepsilon, n) = 0 , \label{eq:I->0}
\end{equation} where $x_n:=\lfloor{\sqrt n}\, x \rfloor \in \z^5$ and \begin{equation} I(\varepsilon, n):=\mathbf P_{x_n}\otimes \rwp_0 \pars*{ \brwrange_c \cap \cnbd{(\xi[0,n])}{{\varepsilon \sqrt n}} \not=\emptyset, \, \brwrange_c \cap \xi[0,n] =\emptyset,\,
\xi[0,n]\subset\ball\pars*{0,\eta{|x_n|}}}
.  \label{def-Iepsilon} \end{equation}
\end{proposition}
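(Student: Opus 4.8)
The plan is to estimate $I(\varepsilon,n)$ by conditioning on the branching random walk $V_c$ and its range $\brwrange_c$, and then controlling the conditional probability, over the walk $\xi$, of the event that $\xi[0,n]$ comes within distance $\varepsilon\sqrt n$ of $\brwrange_c$ without actually hitting it, while staying inside $\ball(0,\eta|x_n|)$. Since $V_c$ starts from $x_n$ with $|x_n|\sim\sqrt n\,|x|$ and $\xi[0,n]$ is confined to a ball of radius $\eta|x_n|<|x_n|$, the relevant part of $\brwrange_c$ is the piece that travels a macroscopic distance of order $\sqrt n$ from its starting point; by the tail bound in \eqref{hyp-brw} and standard branching-random-walk estimates, the probability that $\brwrange_c$ reaches within $2\eta|x_n|$ of the origin at all is of order $g(x_n)\asymp n^{-3/2}$ (here $d-2=3$). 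So the factor $n$ in \eqref{eq:I->0} is essentially absorbed by requiring $\brwrange_c$ to make this long excursion, and what must be shown is that, \emph{given} that $\brwrange_c$ does reach the ball of radius $\eta|x_n|$, the conditional probability that $\xi[0,n]$ enters the $\varepsilon\sqrt n$-neighborhood of $\brwrange_c$ but misses $\brwrange_c$ itself tends to $0$ as first $n\to\infty$ and then $\varepsilon\to0$.

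The key point is a quantitative \emph{non-avoidance} statement for the $\theta$-walk relative to a random walk range: in $d=5$, if a random walk path $\xi[0,n]$ (of diffusive diameter $\asymp\sqrt n$) passes through the $\varepsilon\sqrt n$-neighborhood of a set $B$, it is very likely to actually hit $B$ unless $B$ is extremely thin. The range $\brwrange_c$ of a critical branching random walk is, at the relevant scale, a thickened version of the ISE/Brownian snake range, which in $d=5$ has Hausdorff dimension $4$ and in particular is \emph{not} polar for Brownian motion (this is exactly the content alluded to in Lemma~\ref{lem:nonpoloar}). The strategy is therefore: first decompose $\brwrange_c$ according to the sub-populations (particles) whose spatial position lies in the annulus where $\xi$ lives; then, for each such portion, use a second-moment / hitting-estimate argument for the $\theta$-walk to bound $\rwp_0(\xi[0,n]\cap B^{\varepsilon\sqrt n}\neq\emptyset,\ \xi[0,n]\cap B=\emptyset)$ by something like $C(\varepsilon)\,\rwp_0(\xi[0,n]\cap B^{\varepsilon\sqrt n}\neq\emptyset)$ with $C(\varepsilon)\to0$; and finally integrate over the law of $\brwrange_c$, using the $n^{-3/2}$ reaching probability to compensate the prefactor $n$. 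After rescaling by $\sqrt n$, the branching random walk converges to the Brownian snake and the walk to Brownian motion (with the matrices $M_\theta$, $M_\xi$ entering as in Theorem~\ref{t:L1}), and the estimate passes to the limit.

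Concretely I would carry out the following steps. \textbf{Step 1:} reduce to the event $\{\brwrange_c\cap \ball(0,2\eta|x_n|)\neq\emptyset\}$ and show, via a spine/many-to-one computation together with the tail assumption in \eqref{hyp-brw} and the Green-function asymptotics \eqref{c_g}, that $\mathbf P_{x_n}(\brwrange_c\cap\ball(0,2\eta|x_n|)\neq\emptyset)\le C n^{-3/2}$, with matching control on the conditional structure of the excursion reaching that far. \textbf{Step 2:} conditionally on $\brwrange_c$, bound the $\xi$-probability in \eqref{def-Iepsilon}: here one writes $\xi[0,n]\cap B^{\varepsilon\sqrt n}\neq\emptyset$ in terms of the first entrance into that neighborhood and then uses a hitting estimate for the $\theta$-walk started near $B$ — the third moment hypothesis in \eqref{hyp-xi} guarantees the Green function / hitting asymptotics for $\xi$ and lets one compare the ``near-miss'' probability to the ``hit'' probability with a small constant. \textbf{Step 3:} recombine and rescale, using the joint convergence of $(n^{-1/2}V_c, n^{-1/2}\xi[0,n])$ to $(W,\ M_\xi^{1/2}\beta[0,1])$ and dominated convergence, to send $n\to\infty$ and then $\varepsilon\to0$. \textbf{Main obstacle:} Step 2 — establishing the uniform (in the realization of $\brwrange_c$, and in $n$) near-miss-implies-hit bound with a constant $C(\varepsilon)\downarrow 0$. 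The difficulty is that $\brwrange_c$ is a genuinely random, rough, and possibly locally thin set, so one cannot simply invoke a clean hitting estimate for a fixed smooth target; the argument must exploit the typical size/density of $\brwrange_c$ at scale $\varepsilon\sqrt n$ (again, the non-polarity of the snake range in $d=5$), presumably via a second-moment estimate on the number of points of $\xi[0,n]$ landing in $B^{\varepsilon\sqrt n}$ near a given point of $B$, controlled below by a positive power of $\varepsilon$. Handling the interaction between the two ranges' fluctuations simultaneously, rather than freezing one, is the technical heart of the proof.
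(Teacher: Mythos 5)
Your high-level plan correctly identifies the crux of the matter — a quantitative ``near-miss implies hit'' estimate for $\xi$ against the branching range — and you are right that the real difficulty is handling both random ranges simultaneously rather than freezing one. But the proposal has a genuine gap in Step 2, and Step 3 is not a viable route.

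The gap: you propose to show, conditionally on $\brwrange_c$, a bound of the form
\begin{equation*}
\rwp_0\bigl(\xi[0,n]\cap B^{\varepsilon\sqrt n}\neq\emptyset,\ \xi[0,n]\cap B=\emptyset\bigr)
\le C(\varepsilon)\,\rwp_0\bigl(\xi[0,n]\cap B^{\varepsilon\sqrt n}\neq\emptyset\bigr),
\qquad C(\varepsilon)\to 0,
\end{equation*}
for $B=\brwrange_c$. No such pointwise bound holds: if $B$ is a single point (which happens with probability $\mu(0)>0$) the right side has no small factor while the left side is essentially the full near-miss probability — this is exactly the mechanism of Remark~\ref{r:B-eta}. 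So the estimate is inherently a statement about \emph{typical} realizations of the branching range, not a deterministic one, and your proposal never explains how to prove it. In the paper this is Lemma~\ref{lem:reversed_intersection} (imported from \cite{bai2022convergence}), which says that with high probability over $V_+$, the avoidance probability $\rwp_x(\xi[0,\infty)\cap V_+[0,n^2]=\emptyset)$ is small \emph{uniformly over nearby starting points $x$}. Getting from the original probability to a form where that lemma applies requires the machinery of Section~\ref{sec:intersection_proba}: reducing to $\{\#\T_c\in[\delta n^2, n^2/\delta]\}$ via the forward/backward depth-first decomposition \eqref{eq:j0}, the absolute continuity \eqref{eq:absolutecontinuity2} between the conditioned tree and $\widehat V_-$, the split $\widehat{\T}_-=\X\cup\Y$ with the trick \eqref{eq:t0_trick} transferring to $V_+$, the strong Markov property of $\xi$ at the first near-miss time, and cutting $V_+$ into $\varepsilon^{-1/\zeta}$ time windows (controlled by the increment estimates of Lemma~\ref{l:max_diam2-V+}). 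None of these steps appears in your outline, and without them there is no path from ``condition on $\brwrange_c$'' to a usable bound.

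Step 3 is also off: the paper does not pass to the Brownian-snake/Brownian-motion scaling limit to prove Proposition~\ref{p:intersection}; it stays entirely at the discrete level. The quantitative rate ($n\,I(\varepsilon,n)\to 0$) is not a consequence of a soft weak-convergence argument — indeed the whole point of the proposition is to close the gap between the closed set $\xi[0,n]$ and its neighborhood, which is precisely what scaling limits alone cannot see. (Also a small confusion: in your Step 2 the hitting estimates should be for the $\xi$-walk, not ``the $\theta$-walk.'') So while the intuition is sound and the target lemma is correctly identified, the proposal is missing the concrete decomposition and the probabilistic (rather than pointwise) non-avoidance input that make the argument work.
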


 \begin{remark}  
 
 (i) The condition $\xi[0,n]\subset\ball\pars*{0,\eta{|x_n|}}$ in $I(\varepsilon, n)$ guarantees that $\brwrange_c$ must have some growth to reach the $(\varepsilon\sqrt{n})$-neighborhood of $\xi[0,n]$. Without this condition, Proposition \ref{p:intersection} is no longer true,  see   Remark \ref{r:B-eta} for further details.
 
 (ii) We can deduce from \eqref{p:compareBcap} that at least for small $\eta$, 
 \begin{eqnarray*}   n^{-3/2}  \Bcap\big(\ball(0,  \varepsilon n^{1/2})\big) 
 &\lesssim  & 
 \mathbf P_{x_n}\otimes \rwp_0 \pars*{   \brwrange_c \cap \cnbd{(\xi[0,n])}{{\varepsilon \sqrt n}} \not=\emptyset, \, \xi[0,n]\subset\ball\pars*{0,\eta{|x_n|}}} 
  \\
  &\lesssim&   n^{-3/2}  \Bcap\big(\ball(0, (\eta |x|+\varepsilon) \sqrt{n})\big)
   .
 \end{eqnarray*}

\noindent The left and right terms of the above inequalities are, according to \eqref{convergence-Kn},   of order $\frac1{n}$.  This  explains the factor $n$ in \eqref{eq:I->0}.

 (iii)  
  When $d\ge 6$, \eqref{eq:I->0} also holds, under the conitions \eqref{hyp-tree},  \eqref{hyp-brw} with $q=d$,  and \eqref{hyp-xi}. In this case, the proof is elementary, see Lemma \ref{l:r1}.
\end{remark}

Let us say a few words on the proofs of Theorem  \ref{t:L1} and  Proposition  \ref{p:intersection}. Using Donsker's invariance principle and the forthcoming \eqref{p:compareBcap} and \eqref{Bcap->BScap},  we can  see that the scaling limit of ${\Bcap(\cnbd{\xi[0, n]}{\varepsilon n^{1/2}})}$  can be compared with   $\BScap(M_\theta^{-1/2}\, M_\xi^{1/2}\beta[0,1]).$  Therefore the proof of Theorem \ref{t:L1}  reduces to show that
$$
\Bcap(\cnbd{\xi[0, n]}{\varepsilon n^{1/2}})\approx
\Bcap({\xi[0, n]}),
$$
in the sense, see \eqref{def-bcap},  that for large $x$ 
 \[
\mathbf P_{x_n}\otimes \rwp_0\pars*{ \brwrange_c \cap \cnbd{(\xi[0,n])}{{\varepsilon n^{1/2}}} \not=\emptyset}
 \approx 
\mathbf P_{x_n}\otimes \rwp_0\pars*{ \brwrange_c \cap  \xi[0,n]  \not=\emptyset},
 \]
which is the content of Proposition \ref{p:intersection} (note that the condition $\xi[0,n]\subset\ball\pars*{0,\eta{|x_n|}}$ holds with high probability as $x$ is large).  To prove Proposition \ref{p:intersection}, we will switch the roles of $\xi$ and $\brwrange_c$ in the probability term of Proposition \ref{p:intersection} and study the probability of the form $\mathbf P_0 \otimes \rwp_{x_n}\big( \xi[0,n] \cap \cnbd{\brwrange_c}{{\varepsilon n^{1/2}}} \neq \emptyset,  {\xi[0,n]} \cap \brwrange_c = \emptyset \big)$. The latter probability can be estimated by a comparison to $\P_0 \otimes \rwp_{x_n}\big( \xi[0, \infty) \cap \brwrange_{n^2}=\emptyset\big),$ where $\brwrange_{n^2}$ denotes the range $\brwrange_c$ conditioned on the total population $\#\T_c=n^2$. This probability has already been studied in \cite{bai2022convergence}. To achieve such a comparison, we need to introduce some auxiliary trees and their associated branching random walks in Section \ref{sec:intersection_proba}, which will be the most technical part. We refer to Section  \ref{sec:3.2} for more detailed explanations on the proof of Proposition \ref{p:intersection}.

\medskip
The paper is organized as follows.
In Section \ref{sec:proof-BS-cap}, we recall some known results on the branching capacity and the Brownian snake capacity. In Section \ref{sec:convergence4}, we prove Theorem \ref{t:L1} by assuming Proposition \ref{p:intersection}. In Section \ref{sec:intersection_proba}, we introduce auxiliary branching random walks $V_+$, $V_-$, and $\widehat{V}_-$ in Section \ref{sec:tadj}, then recall several known results on $V_+$ and $\widehat{V}_-$ in Section \ref{sub:known}. After establishing some estimates on the increments of $V_+$ in Section \ref{sub:increment}, we present the key step in the proof of Proposition \ref{p:intersection} in Section \ref{sec:3.2}, involving a study of intersection probabilities between $V_\pm$ and the random walk $\xi$. Finally, we provide the proof of Proposition \ref{p:intersection} in Section \ref{sec:3.4}.

For notational convenience, 
we use the notations $C, C', C''$, eventually with some subscripts, to denote some positive constants whose values may vary from one paragraph to another.

\section{Brownian snake  capacity and Proof of  Theorem \ref{t:L1} by assuming Proposition \ref{p:intersection}}

\subsection{Branching capacity and Brownian snake  capacity.} 
\label{sec:proof-BS-cap}
We collect some facts on the Branching and the Brownian snake  capacities in this subsection. 
At first the following result (see \cite[Theorem 1.1]{bdh-23+})  provides a rate of convergence in \eqref{def-bcap} that will also be useful in our proof of Theorem \ref{t:L1}.  Let $d\ge 5$. Assume \eqref{hyp-tree},  \eqref{hyp-brw} with $q=d$. For any $\lambda>0$, there exist  some positive constants $\alpha$ and $C$ such that  for any $r\ge 1$,   $K \subset \ball(0,r) \cap \z^d$, $x\in \z^d$ with   $|x|\ge (1+\lambda)r$, we have
\begin{equation}  \Big|\Bcap(K)- \frac{ \P_x(\brwrange_c\cap K\neq \emptyset)}{g(x)} \Big| \le C \pars*{\frac{r}{|x|}}^\alpha\, \Bcap(K),
 \label{p:compareBcap} \end{equation}

\noindent where $\diam(K):=  \sup_{x, y\in K} |x-y|$  is the diameter of $K$. Moreover,  there are constants $c_1,c_2>0$, such that for any Borel set $A\subset \r^d$, 
\begin{equation}\label{eq:BScap_Cap_d-4}
c_1\Cap_{d-4}(A)\le \BScap(A)\le c_2\Cap_{d-4}(A), 
\end{equation}
 
 \noindent where for any $\gamma \in (0,d)$, 
\begin{equation}\label{def:capd=4}
    \Cap_\gamma(A):= \pars*{\inf_{\nu}\iint |x-y|^{-\gamma} \, \nu(\rd  x) \nu(\rd y)}^{-1}, 
\end{equation}
with  the infimum  taken  over all  the  probability measures  with
support in $A$.

 \begin{lemma}  Let $d\ge 5$. Assume \eqref{hyp-tree},  \eqref{hyp-brw} with $q=d$. For any compact  set $K\subset \r^d$, we have  \begin{equation} \limsup_{\varepsilon\to0} \limsup_{n\to\infty} \big| \frac{\Bcap(n\cnbd K \varepsilon)}{n^{d-4}}- c_\theta\BScap(M_\theta^{-1/2}K)\big|=0,  \label{Bcap->BScap}\end{equation} with  $ c_\theta $ given in \eqref{def-ctheta}.
\end{lemma}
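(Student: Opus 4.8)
\emph{Proof plan.} The statement \eqref{Bcap->BScap} is essentially \eqref{convergence-Kn} applied to the closed $\varepsilon$-neighbourhoods $\cnbd{K}{\varepsilon}$ in place of $K$, followed by letting $\varepsilon\downarrow 0$. Since $\Bcap(n\cnbd{K}{\varepsilon})=\Bcap(\sqrt{n^2}\,\cnbd{K}{\varepsilon})$ and $n^{d-4}=(n^2)^{(d-4)/2}$, running \eqref{convergence-Kn} along the parameter $n^2$ gives, \emph{for every $\varepsilon>0$ such that $\cnbd{K}{\varepsilon}$ satisfies \eqref{K-regular}},
\[
\lim_{n\to\infty}\frac{\Bcap(n\cnbd{K}{\varepsilon})}{n^{d-4}}=c_\theta\,\BScap\bigl(M_\theta^{-1/2}\cnbd{K}{\varepsilon}\bigr).
\]
For such $\varepsilon$ the inner limit in \eqref{Bcap->BScap} exists and equals $c_\theta\bigl|\BScap(M_\theta^{-1/2}\cnbd{K}{\varepsilon})-\BScap(M_\theta^{-1/2}K)\bigr|$, so the proof reduces to two points: (a) $\cnbd{K}{\varepsilon}$ satisfies \eqref{K-regular} for all $\varepsilon>0$ outside an at most countable set $D$; and (b) $\BScap(M_\theta^{-1/2}\cnbd{K}{\varepsilon})\to\BScap(M_\theta^{-1/2}K)$ as $\varepsilon\downarrow 0$.

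For (a), write $U_\varepsilon:=\{x\in\r^d:\dd(x,K)<\varepsilon\}$, an open set with $U_\varepsilon\subset\mathring{\cnbd{K}{\varepsilon}}\subset\cnbd{K}{\varepsilon}$, and observe that $\cnbd{K}{\varepsilon'}\subset U_\varepsilon$ whenever $\varepsilon'<\varepsilon$. Since $\BScap$ is non-decreasing,
\[
\BScap(\cnbd{K}{\varepsilon'})\le\BScap\bigl(\mathring{\cnbd{K}{\varepsilon}}\bigr)\le\BScap(\cnbd{K}{\varepsilon})\qquad(0<\varepsilon'<\varepsilon).
\]
The map $\varepsilon\mapsto\BScap(\cnbd{K}{\varepsilon})$ is non-decreasing, hence continuous outside an at most countable set $D$; letting $\varepsilon'\uparrow\varepsilon$ in the previous display for $\varepsilon\notin D$ squeezes $\BScap(\mathring{\cnbd{K}{\varepsilon}})=\BScap(\cnbd{K}{\varepsilon})$, which is exactly \eqref{K-regular} for the compact set $\cnbd{K}{\varepsilon}$.

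For (b), as $\varepsilon\downarrow 0$ the compact sets $\cnbd{K}{\varepsilon}$ decrease to $\bigcap_{\varepsilon>0}\cnbd{K}{\varepsilon}=K$ (here we use that $K$ is closed), hence their images $M_\theta^{-1/2}\cnbd{K}{\varepsilon}$ under the fixed linear isomorphism $M_\theta^{-1/2}$ decrease to $M_\theta^{-1/2}K$. Since $\BScap$ is a Choquet capacity (see \cite{bdh-23+})---equivalently, since a.s.\ $\RR$ is compact, so that $\{\RR\cap L_j\neq\emptyset\}\downarrow\{\RR\cap L\neq\emptyset\}$ whenever compacts $L_j\downarrow L$, and the convergence in \eqref{def-bscap} is uniform over compacts contained in a fixed ball---it is continuous from above along decreasing compact sets; hence $\BScap(M_\theta^{-1/2}\cnbd{K}{\varepsilon})\downarrow\BScap(M_\theta^{-1/2}K)$.

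Finally, to pass from co-countably many $\varepsilon$ to all $\varepsilon$, fix $\varepsilon>0$ and pick $\varepsilon^-<\varepsilon<\varepsilon^+$ with $\varepsilon^\pm\notin D$ close to $\varepsilon$. Monotonicity of $\Bcap$ gives $\Bcap(n\cnbd{K}{\varepsilon^-})\le\Bcap(n\cnbd{K}{\varepsilon})\le\Bcap(n\cnbd{K}{\varepsilon^+})$, whence, using the case already settled for $\varepsilon^\pm$,
\[
c_\theta\BScap\bigl(M_\theta^{-1/2}\cnbd{K}{\varepsilon^-}\bigr)\le\liminf_{n\to\infty}\frac{\Bcap(n\cnbd{K}{\varepsilon})}{n^{d-4}}\le\limsup_{n\to\infty}\frac{\Bcap(n\cnbd{K}{\varepsilon})}{n^{d-4}}\le c_\theta\BScap\bigl(M_\theta^{-1/2}\cnbd{K}{\varepsilon^+}\bigr).
\]
Letting $\varepsilon\to0$ (so $\varepsilon^\pm\to0$ as well) and invoking (b), both the $\liminf$ and the $\limsup$ converge to $c_\theta\BScap(M_\theta^{-1/2}K)$, which gives \eqref{Bcap->BScap}. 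The only non-routine inputs are the mild regularity fact (a)---that the closed $\varepsilon$-neighbourhoods of a compact set satisfy \eqref{K-regular} for generic $\varepsilon$---and the continuity from above of $\BScap$ in (b); I expect step (a) to be the point needing most care, everything else being the scaling substitution $n\mapsto n^2$ and monotonicity of $\Bcap$.
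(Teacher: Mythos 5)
Your proof is correct, and it takes a genuinely different route from the paper's on the key regularity step. The paper verifies a strong geometric regularity condition, namely \cite[(1.7)]{bdh-23+}, for the neighbourhood $K^\varepsilon$ \emph{for every} $\varepsilon>0$: it observes that any boundary point $y\in\partial K^\varepsilon$ has a ball of radius $2^{-n-1}$ inside $K^\varepsilon\cap\ball(y,2^{-n})$ (a corkscrew-type lower bound on $\Cap_{d-2}$), which yields $\BScap(K^\varepsilon)=\BScap(K^{\varepsilon-})$ directly for all $\varepsilon$, and then applies \cite[Theorem~1.4]{bdh-23+} and continuity from above. You instead obtain the weaker condition \eqref{K-regular} only for all but countably many $\varepsilon$, by a soft monotonicity argument on $\varepsilon\mapsto\BScap(K^\varepsilon)$, and recover all $\varepsilon$ by the sandwich $\Bcap(nK^{\varepsilon^-})\le\Bcap(nK^\varepsilon)\le\Bcap(nK^{\varepsilon^+})$; the final limit $\varepsilon\to0$ then rests on the same Choquet continuity-from-above used in the paper. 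Your approach is more robust in the sense that it uses nothing about the geometry of neighbourhoods, only monotonicity; the paper's is more direct and establishes the pointwise-in-$\varepsilon$ limit without the sandwich.

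One small caution. The paper is careful to note that the regularity of the neighbourhood must also be checked after the linear change of variables, i.e.\ for $M_\theta^{-1/2}K^\varepsilon$ (``Notice that these results also hold when $K$ is replaced by $M_\theta^{-1/2}K$ \dots''), since \cite[Theorem~1.4]{bdh-23+} is applied on the $M_\theta^{-1/2}$-scaled set. Your step (a) only addresses $K^\varepsilon$. This is not a real obstruction: the identical monotonicity argument applies to $\varepsilon\mapsto\BScap(M_\theta^{-1/2}K^\varepsilon)$ (note $(M_\theta^{-1/2}K)^{c\varepsilon}\subset M_\theta^{-1/2}(K^\varepsilon)\subset(M_\theta^{-1/2}K)^{C\varepsilon}$ for constants $0<c\le C$ depending on $M_\theta$, so these neighbourhoods interlace), producing a second co-countable set, and you take the intersection. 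But you should say so explicitly rather than leaving it implicit.
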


\begin{proof}  First  $\BScap(\cdot)$ is a Choquet capacity relative to the set of compact sets on $\r^d$. In particular,    for every compact set $K\subset\mathbb R^d$, we have
 \begin{equation}  \lim_{\varepsilon\rightarrow 0+}\BScap(\cnbd K\varepsilon)
= \BScap(K) .  \label{eq:Kepsilon} \end{equation}

\noindent We check that the condition \cite[(1.7)]{bdh-23+} is satisfied for $K^\varepsilon$:   for any $y \in \partial K^\varepsilon$ and $n$ large enough, $K^\varepsilon\cap \ball(y, 2^{-n})$ contains a ball of radius $2^{-n-1}$ so  $\mbox{Cap}_{d-2}(K^\varepsilon\cap \ball(y, 2^{-n})) \ge \mbox{Cap}_{d-2}(\ball(0, 2^{-n-1}))= 2^{-(d-2)(n+1)} \mbox{Cap}_{d-2}(\ball(0, 1))$. Consequently for any $\varepsilon>0$, we have $\BScap(K^\varepsilon)=\BScap(K^{\varepsilon-})$, where $K^{\varepsilon-}=\{x \in \r^d: \dd(x, K)< \varepsilon\}$ the open $\varepsilon$-neighborhood of $K$.  Notice that these results also hold when $K$ is replaced by $\brwm^{-1/2} K$ and $K^\varepsilon$ by $\brwm^{-1/2} \, K^\varepsilon$.  This and \cite[Theorem 1.4]{bdh-23+} yield that 
 \begin{equation}\label{eq:Kepsilon2}
\lim_{n\rightarrow\infty}\frac{\Bcap(n\cnbd K \varepsilon)}{n^{d-4}}
= c_\theta\BScap(\brwm^{-1/2}\, \cnbd K\varepsilon).
\end{equation}

\noindent   Finally,  \eqref{Bcap->BScap} follows from \eqref{eq:Kepsilon2} and  \eqref{eq:Kepsilon}. 
 \end{proof}

\medskip
We end this subsection by the following result on the positivity of the Brownian snake capacity of the range of a Brownian motion. Recall that for a real  $d\times d$ matrix $M$, $M  \beta[0,1]=\setof{M  \beta_t}{0\le t\le 1}$.

\begin{lemma}
\label{lem:nonpoloar}  
Let $(\beta_t)_{t\ge 0}$ be a Brownian motion in $\r^d$ and $M$ be a
symmetric positive definite matrix $d\times d$. Then almost surely, 
 $$
\BScap(M   \beta[0,1]) \, 
\begin{cases}
>0, \qquad \mbox{ if $d=5$}, \\
=0, \qquad \mbox{ if $d\ge 6$}.
\end{cases}
$$
\end{lemma}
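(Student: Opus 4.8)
The plan is to prove both statements via the comparison \eqref{eq:BScap_Cap_d-4} with the Riesz capacity $\Cap_{d-4}$, which reduces the problem to a statement about the Riesz energy of a suitable measure carried by $M\beta[0,1]$. Since $M$ is symmetric positive definite, $M\beta[0,1]$ is bi-Lipschitz equivalent to $\beta[0,1]$, so $\Cap_{d-4}(M\beta[0,1])$ and $\Cap_{d-4}(\beta[0,1])$ are comparable up to constants depending only on $M$; hence it suffices to treat $M = I$. By \eqref{eq:BScap_Cap_d-4} it then suffices to show that a.s.\ $\Cap_{d-4}(\beta[0,1]) > 0$ when $d = 5$ (i.e.\ $\Cap_1(\beta[0,1]) > 0$) and $\Cap_{d-4}(\beta[0,1]) = 0$ when $d \ge 6$.

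For the case $d = 5$: I would exhibit a probability measure on $\beta[0,1]$ with finite $1$-energy. The natural candidate is the occupation measure $\mu(\cdot) := \int_0^1 \ind_{\{\beta_t \in \cdot\}}\, \dd t$, normalized to a probability measure (it is already one). Its $1$-energy is $\iint |x-y|^{-1}\,\mu(\dd x)\mu(\dd y) = \int_0^1\!\int_0^1 |\beta_s - \beta_t|^{-1}\,\dd s\,\dd t$. Taking expectations and using that $\beta_s - \beta_t$ is Gaussian with variance $|s-t|$ in $\r^5$, one gets $\e[\,|\beta_s-\beta_t|^{-1}\,] = c\,|s-t|^{-1/2}$, whose double integral over $[0,1]^2$ is finite. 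Hence the expected $1$-energy is finite, so a.s.\ the energy is finite, so $\Cap_1(\beta[0,1]) > 0$ almost surely. (One can alternatively invoke the classical fact that a set of Hausdorff dimension $2 > d-4 = 1$ in $\r^5$ has positive $\Cap_{d-4}$; the occupation-measure computation is the cleanest self-contained route.)

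For the case $d \ge 6$: here $d - 4 \ge 2$, and $\beta[0,1]$ has Hausdorff dimension $2 \le d-4$. The standard energy criterion (Frostman) gives that a set of Hausdorff dimension strictly less than $\gamma$ has vanishing $\Cap_\gamma$, which settles $d \ge 7$ immediately. The borderline case $d = 6$, $\gamma = 2$, is more delicate since $\dim \beta[0,1] = 2$ exactly. Here I would argue directly: any probability measure $\nu$ on $\beta[0,1]$ has infinite $2$-energy a.s. One clean way is to use that for Brownian motion in $\r^d$ with $d\ge 4$, two independent paths a.s.\ do not intersect, which by a standard second-moment / capacity argument is equivalent to $\Cap_{d-4}(\beta[0,1]) = 0$ (the hitting probability of a set $A$ by an independent Brownian path is controlled above and below by $\Cap_{d-2}$ of $A$ relative to the ambient dimension, and for the \emph{range} of one Brownian motion seen by another this translates into $\Cap_{d-4}$); since independent Brownian ranges in $\r^d$, $d \ge 6$, a.s.\ miss each other (indeed $2 + 2 < d$ fails to hold only at $d\le 4$, but the relevant intersection-dimension formula $\max(4 - d, \text{something})$ forces emptiness for $d \ge 6$... ) — to avoid circularity I would instead just invoke the Brownian-snake/Riesz comparison in reverse together with the known fact, recalled in the introduction via \cite{bdh-23+}, that $\BScap$ of a single Brownian range is a $(d-4)$-capacity and the explicit computation that $\e\big[\iint |\beta_s - \beta_t|^{-(d-4)}\,\dd s\,\dd t\big] = c\int\!\!\int |s-t|^{-(d-4)/2}\,\dd s\,\dd t = \infty$ for $d \ge 6$, combined with a uniform-integrability / self-similarity blow-up argument near the diagonal to upgrade "expected energy infinite" to "energy a.s.\ infinite for every $\nu$."

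The main obstacle is precisely this last upgrade at the critical dimension $d = 6$: showing that \emph{no} probability measure supported on $\beta[0,1]$ has finite $2$-energy, rather than just the canonical occupation measure. The cleanest fix is to localize: restrict attention to $\beta$ on a small time interval around a typical point, use Brownian scaling to see that the $2$-energy of any sub-probability measure on $\beta[t, t+h]$, renormalized, is distributed like the $2$-energy on $\beta[0,1]$ times a divergent factor as $h \to 0$, and conclude by a $0$--$1$ law (the tail of Brownian motion) that the energy is a.s.\ $+\infty$. Alternatively, and perhaps most economically, one can directly cite the known result that the range of planar-time Brownian motion in $\r^d$ is a.s.\ polar for $(d-4)$-capacity when $d \ge 6$ — this is exactly the content behind the transition at $d = 6$ already referenced in the introduction — so the lemma's $d\ge 6$ half may simply be attributed, with the $d = 5$ half given the short occupation-measure proof above.
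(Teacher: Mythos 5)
Your reduction — first replace $\BScap$ by $\Cap_{d-4}$ via \eqref{eq:BScap_Cap_d-4}, then remove the matrix $M$ by bi-Lipschitz comparison in the energy integral \eqref{def:capd=4} — is exactly the paper's. Your treatment of $d=5$ (occupation measure $\mu(\cdot)=\int_0^1 \ind_{\{\beta_t\in\cdot\}}\,\dd t$, $\e[\,|\beta_s-\beta_t|^{-1}]=c|s-t|^{-1/2}$ integrable on $[0,1]^2$, hence finite $1$-energy a.s.) is a correct, self-contained alternative to the paper's citation of Pemantle--Peres--Shapiro \cite{PPS}, and the Frostman dimension argument settles $d\ge 7$ correctly.

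The genuine gap is at $d=6$, and you have correctly identified it yourself: showing the occupation measure has infinite expected $2$-energy does not show that \emph{every} probability measure on $\beta[0,1]$ has infinite $2$-energy. Your proposed repairs do not close it. The detour through non-intersection of independent Brownian paths conflates two different capacities: hitting of a set $A$ by an independent Brownian motion is governed by $\Cap_{d-2}(A)$, not $\Cap_{d-4}(A)$; indeed non-intersection holds already for $d\ge 4$ while $\Cap_{d-4}(\beta[0,1])>0$ for $d=5$, so that equivalence is simply false. The "localize, rescale, and apply a $0$--$1$ law" sketch is a plausible strategy but is not carried out and is not obviously elementary: the energy of an arbitrary sub-probability measure on $\beta[t,t+h]$ is not simply a rescaled copy of the energy on $\beta[0,1]$, since the measure may concentrate near the boundary of the time window. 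Your fallback — to cite the sharp result — is the right move and is exactly what the paper does: \cite[Theorem 1.1 and formula (9)]{PPS} gives directly that a.s.\ $\Cap_\gamma(\beta[0,1])>0$ if and only if $\gamma<\min(2,d)$, which handles the critical $d=6$ case ($\gamma=2$) along with all the others in one stroke. So the lemma should be attributed to \cite{PPS} rather than reproved; your independent derivation only covers $d=5$ and $d\ge 7$.
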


\begin{proof} By \cite[Theorem 1.1 and formula (9)]{PPS}, a.s., $\Cap_\gamma(\beta[0, 1])>0 $ if and only if $\gamma<\min(2, d)$. In particular, $\Cap_{d-4}(\beta[0, 1])$ is positive when $d=5$ and zero when $d\ge 6$. By \eqref{def:capd=4}, the same result holds when $\beta[0, 1]$ is replaced by $M\beta[0, 1]$,  because $\frac{|Mx-My|}{|x-y|}$ is uniformly bounded in $(0, \infty)$ for all $x\neq y$.  We conclude by using \eqref{eq:BScap_Cap_d-4}. \end{proof}

\subsection{Proof of  Theorem \ref{t:L1} by assuming Proposition \ref{p:intersection}} \label{sec:convergence4}

By  Donsker's   invariance  principle  and   Skorokhod's  representation
theorem, on a  common probability space $(\Omega, {\mathscr  F}, \p)$ we
may find a version of the random  walk $(\xi_n)$ starting from $0$ and a
standard Brownian  motion $(\beta_t)_{t\ge 0}$  in $\r^d$ such  that for
$\Theta:=\{M_\xi^{1/2}\beta_t: 0\le t \le  1\}$, almost surely for every
$\varepsilon>0$,  there  exists some  $n_0\ge  1$  such that  for  every
$n\ge n_0$,
\begin{equation}
n^{1/2}\cdot\Theta^{\varepsilon/2}\subset \cnbd{\xi[0,n]}{\varepsilon n^{1/2}}\subset n^{1/2}\cdot \cnbd\Theta{2\varepsilon}.\label{couplingBM0}
\end{equation}

\noindent Applying \eqref{Bcap->BScap} to $n^{1/2}\cdot \cnbd\Theta{\varepsilon/2}$ and $n^{1/2}\cdot \cnbd\Theta{2\varepsilon}$, we deduce that for  $d= 5$, $\p$-almost surely,    
\begin{align}\label{eq:a.s.}
\limsup_{\varepsilon\rightarrow 0+}\limsup_{n\rightarrow\infty}
\absolute*{
\frac{\Bcap(\cnbd{\xi[0,n]}{\varepsilon n^{1/2}})}{n^{{1/2}}}-c_\theta\BScap(M_\theta^{-1/2}\, \Theta)}=0.
\end{align}

Now we are ready to give the proof of Theorem \ref{t:L1}.

\begin{proof}[Proof of Theorem \ref{t:L1} by assuming Proposition \ref{p:intersection}]
Let $d=5$. By \eqref{eq:a.s.}, it suffices to show that  
\begin{align}\label{eq:e_cap_diff}
\limsup_{\delta\to0+} \limsup_{\varepsilon\rightarrow 0+}\limsup_{n\rightarrow\infty}\p\pars*{\Bcap(\cnbd{\xi[0,n]}{\varepsilon n^{1/2}})-\Bcap(\xi[0,n])>\delta n^{1/2}}=0.
\end{align}

Let $x\in \r^d\backslash\{0\}$ and  let $ x_n=\lfloor n^{1/2}x\rfloor$.
Let $\eta>0$ be small whose value will be determined later.
For $n$ large enough, on the event 
\[ E_n=E_n(x, \eta):= \Big\{\xi[0,n]\subset\ball(0,\eta|x_n|)  \Big\}, \]

\noindent by applying twice   \eqref{p:compareBcap} to obtain the following first and third inequalities, we have 
\begin{align*}
 \Bcap(\xi[0,n])   
&\ge  \frac{ \P_{x_n}\pars*{\brwrange_c \cap \xi[0,n]\neq \emptyset}}{ (1+C \eta^\alpha) g(x_n) }  
\\
&=  \frac{ \P_{x_n}\pars*{\brwrange_c \cap \cnbd{\xi[0,n]}{\varepsilon n^{1/2}}\neq \emptyset}}{(1+C \eta^\alpha)  g(x_n)  }  
 -  \frac{e_n(\varepsilon)}{1+C \eta^\alpha }
\\
&\ge  \frac{1-C   (\eta+2\varepsilon|x|^{-1})^\alpha}{1+C \eta^\alpha} \Bcap\pars*{\cnbd{\xi[0,n]}{\varepsilon n^{1/2}}}
 -   e_n(\varepsilon) ,
\end{align*}
 
\noindent where 
\begin{equation*}
e_n(\varepsilon):=\frac{1}{g(x_n) } \P_{x_n}\pars*{\brwrange_c \cap \xi[0,n]= \emptyset, \brwrange_c \cap \cnbd{\xi[0,n]}{\varepsilon n^{1/2}}\neq \emptyset}.
\end{equation*}

\noindent  For any  $\delta>0$, we  may find  and then  fix sufficiently
small $\eta=\eta(\delta)>0$ and $\varepsilon_0:=\varepsilon_0(\delta, |x|)>0$ such that
for    all   $\varepsilon\le    \varepsilon_0$,   we    have
$  \frac{1-C  (\eta+2\varepsilon|x|^{-1})^\alpha}{1+C  \eta^\alpha}  \ge
1-\delta^2$. It follows that
\begin{equation}\label{eq:error_e}
\begin{aligned}
&\p\pars*{\Bcap(\cnbd{\xi[0,n]}{\varepsilon n^{1/2}})-\Bcap(\xi[0,n])>\delta n^{1/2}}\\
\le&\p\pars*{
\delta^2\, \Bcap\pars*{\cnbd{\xi[0,n]}{\varepsilon n^{1/2}}}+e_n(\varepsilon)
>\delta n^{1/2},\, E_n}+\p(E_n^c)\\
\le&\p\pars*{
  n^{-1/2}\Bcap\pars*{\cnbd{\xi[0,n]}{\varepsilon n^{1/2}}}> \frac1{2\delta}}+
\p\pars*{e_n(\varepsilon)>\frac\delta2 n^{1/2},\, E_n}+\p(E_n^c)\\
=&:\eqref{eq:error_e}_1+\eqref{eq:error_e}_2+\eqref{eq:error_e}_3.
\end{aligned}
\end{equation}

\noindent By \eqref{eq:a.s.},
\begin{align*}
\limsup_{\varepsilon\rightarrow 0+}\limsup_{n\rightarrow\infty}\eqref{eq:error_e}_1
\le\p\pars*{\BScap(\Theta)>\frac{1}{4 c_\theta \delta}}
\end{align*}
is arbitrarily small as we take $\delta\rightarrow 0+$.

By Markov's inequality, $\eqref{eq:error_e}_2\le a_n    I(\varepsilon, n)$ where $I(\varepsilon, n)$ was defined in \eqref{def-Iepsilon} and  $a_n:= 2 \delta^{-1}    n^{-1/2} (g(x_n))^{-1} $ is of order of $n$ by  \eqref{c_g},  as $n\to\infty$ (recalling that $d=5$).  
By Proposition \ref{p:intersection},
\begin{align}\label{eq:bda_en2}
\limsup_{\varepsilon\rightarrow 0+}\limsup_{n\rightarrow\infty}\eqref{eq:error_e}_2\le C' \limsup_{\varepsilon\rightarrow 0+}\limsup_{n\rightarrow\infty} n \, I(\varepsilon, n)=0.
\end{align}

\noindent Finally for $\eqref{eq:error_e}_3$, we deduce from the standard random walk fluctuations that  
($\eta$ is fixed) $$
\limsup_{x\to \infty}\limsup_{n\rightarrow\infty} \p(E_n^c)=0.
$$
Combining the terms above and letting first $n\rightarrow\infty $, then
$\varepsilon\rightarrow 0$, $|x|\rightarrow \infty $ and lastly
$\delta\rightarrow 0$, we obtain \eqref{eq:e_cap_diff} and complete the proof of Theorem \ref{t:L1}.
\end{proof}

Recall  $I(\varepsilon, n)$,  defined  in \eqref{def-Iepsilon},  depends
also on $x$ and $\eta$. We  give an elementary proof of ~\eqref{eq:I->0}
when $d\geq 6$.

\begin{lemma}\label{l:r1}  Let $d\ge 6$.  Assume \eqref{hyp-tree},  \eqref{hyp-brw} with $q=d$. For any fixed $x\in\r^d\backslash\{0\}$ and any   $\eta\in(0,1)$,  we have $$ \limsup_{\varepsilon\to0} \limsup_{n\to\infty} n \, I(\varepsilon, n)=0.$$
  \end{lemma}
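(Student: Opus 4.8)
The plan is to bound $I(\varepsilon,n)$ by the probability that the branching random walk range $\brwrange_c$, started from the far point $x_n$, reaches the $\varepsilon\sqrt n$-neighborhood of $\ball(0,\eta|x_n|)$, and then to show this probability is $o(1/n)$ for $d\ge 6$. Concretely, on the event defining $I(\varepsilon,n)$ we have $\xi[0,n]\subset\ball(0,\eta|x_n|)$, hence $\cnbd{(\xi[0,n])}{\varepsilon\sqrt n}\subset\ball(0,(\eta|x_n|+\varepsilon\sqrt n))$, so that
\[
I(\varepsilon,n)\le \mathbf P_{x_n}\pars*{\brwrange_c\cap \ball\pars*{0,(\eta|x|+\varepsilon)\sqrt n}\neq\emptyset}.
\]
Since $|x_n|\sim|x|\sqrt n\to\infty$ and (for $\eta,\varepsilon$ small, using $\eta<1$) the ball has radius comparable to but strictly smaller than $|x_n|$, we may apply \eqref{p:compareBcap}: there are $\alpha,C>0$ with
\[
\mathbf P_{x_n}\pars*{\brwrange_c\cap \ball\pars*{0,(\eta|x|+\varepsilon)\sqrt n}\neq\emptyset}\le (1+C\eta^\alpha)\, g(x_n)\, \Bcap\pars*{\ball\pars*{0,(\eta|x|+\varepsilon)\sqrt n}}.
\]

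Now I invoke the scaling relation \eqref{convergence-Kn} (or equivalently \eqref{Bcap->BScap} with $K$ a ball, for which \eqref{K-regular} holds): $\Bcap(\ball(0,R\sqrt n))\sim c_\theta\,\BScap(M_\theta^{-1/2}\ball(0,R))\, n^{(d-4)/2}$ with $R=\eta|x|+\varepsilon$, a finite positive constant times $n^{(d-4)/2}$. Combined with $g(x_n)\sim c_g|x_n|_\theta^{2-d}\asymp n^{(2-d)/2}$ from \eqref{c_g}, we get
\[
I(\varepsilon,n)\le C'(\eta,\varepsilon,x)\, n^{(2-d)/2}\cdot n^{(d-4)/2}=C'(\eta,\varepsilon,x)\, n^{-1},
\]
so $n\,I(\varepsilon,n)\le C'(\eta,\varepsilon,x)$, a constant depending on $\eta,\varepsilon,x$ but not on $n$. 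This alone is not enough — I need the double limit to vanish — so the final point is to track the $\varepsilon$-dependence: the constant $C'(\eta,\varepsilon,x)$ is, up to fixed factors, $(1+C\eta^\alpha)\,c_g c_\theta\,\BScap(M_\theta^{-1/2}\ball(0,\eta|x|+\varepsilon))$ times $|x|^{2-d}$-type factors, and as $\varepsilon\to0$ this converges to the corresponding quantity with $\varepsilon=0$, which is finite and bounded uniformly. Hence $\limsup_{\varepsilon\to0}\limsup_{n\to\infty} n\,I(\varepsilon,n)\le C''(\eta,x)<\infty$.

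At this stage the bound is still only $O(1)$, not $o(1)$, so one more idea is needed — and this is the real point where $d\ge 6$ enters, as opposed to $d=5$. The resolution is that the event in $I(\varepsilon,n)$ is far more restrictive than just hitting the ball: it requires $\brwrange_c$ to hit the thin $\varepsilon\sqrt n$-shell around $\xi[0,n]$ while missing $\xi[0,n]$ itself. A cleaner route, and the one I would actually use, is to keep $\xi[0,n]$ fixed and bound $I(\varepsilon,n)$ directly by $\mathbf P_{x_n}(\brwrange_c\cap\cnbd{(\xi[0,n])}{\varepsilon\sqrt n}\neq\emptyset)$ minus $\mathbf P_{x_n}(\brwrange_c\cap\xi[0,n]\neq\emptyset)$, apply \eqref{p:compareBcap} to both with the same $g(x_n)$ factor, and use that for $d\ge 6$, $\Bcap(\cnbd{\xi[0,n]}{\varepsilon\sqrt n})-\Bcap(\xi[0,n])$, after multiplication by $n\cdot g(x_n)\asymp n^{(4-d)/2}\le n^{-1}$, tends to $0$ because $n^{-(d-4)/2}\Bcap(\cnbd{\xi[0,n]}{\varepsilon\sqrt n})$ and $n^{-(d-4)/2}\Bcap(\xi[0,n])$ both converge (after the $\varepsilon\to0$ limit) to the same limit $c_\theta\BScap(M_\theta^{-1/2}\,M_\xi^{1/2}\beta[0,1])=0$ — vanishing precisely because $d\ge 6$ (Lemma \ref{lem:nonpoloar}). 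Taking expectations over $\xi$ and using uniform integrability (the normalized capacities are dominated by capacities of balls, which are bounded in $L^1$) closes the argument. The main obstacle is this last uniform-integrability / interchange-of-limits bookkeeping; the geometric and capacity-scaling inputs are all already available from \eqref{p:compareBcap}, \eqref{Bcap->BScap} and Lemma \ref{lem:nonpoloar}.
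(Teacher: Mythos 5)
Your proposal's second route is essentially the paper's argument, and you correctly identify the main ingredients: drop to a capacity bound via \eqref{p:compareBcap}, use the Donsker coupling \eqref{couplingBM0} and \eqref{Bcap->BScap} to pass to $\BScap$, handle the interchange of limit and expectation by a uniform bound from ball capacities, and finally kill the limit with Lemma \ref{lem:nonpoloar} since $\BScap(M_\theta^{-1/2}M_\xi^{1/2}\beta[0,1])=0$ precisely for $d\ge 6$. The paper's actual proof is slightly cleaner than yours in one place: instead of writing $I(\varepsilon,n)$ as a difference and separately controlling $n^{(4-d)/2}\Bcap(\xi[0,n])$ — a convergence that \eqref{Bcap->BScap} does not give you, since $\xi[0,n]$ is a random set that fails the regularity condition \eqref{K-regular}, and which you would have to import from Schapira's results — the paper simply drops the event $\{\brwrange_c\cap\xi[0,n]=\emptyset\}$ from the upper bound, leaving only $\Xi_n := n^{(4-d)/2}\Bcap(\cnbd{\xi[0,n]}{\varepsilon\sqrt n})\,1_{E_n}$, then shows $\Xi_n$ is uniformly bounded (so Fatou applied to $C''-\Xi_n$ gives the $\limsup$ in expectation) and that $\e(\Xi_n)\to c_\theta\,\e(\BScap(M_\theta^{-1/2}\Theta^{2\varepsilon}))\to 0$. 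Your additional term is nonnegative and so could have been discarded for free; tracking it buys nothing and creates the one step your proposal leaves unestablished. Your first paragraph (bounding by the capacity of the ball of radius $(\eta|x|+\varepsilon)\sqrt n$) is, as you note, a dead end since it only gives $O(1)$.
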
   

\begin{proof} Let  $\Xi_n:= n^{(4-d)/2} \Bcap
  (\cnbd{(\xi[0,n])}{{\varepsilon \sqrt n}} ) \, 1_{\{
    \xi[0,n]\subset\ball(0, \eta |x_n|)\}}$.  By \eqref{p:compareBcap},
  there exists some positive constant $C=C(\eta, |x|)$ such that for all $n$,  $$
 n\,  I(\varepsilon, n) \le  C \,   \e (\Xi_n). $$ 

\noindent By \cite{zhu2016critical}, there exists some positive constant $C'$ such that for all $n$, $\Bcap(\ball(0, \eta |x_n|)) \le C'  |x_n|^{d-4}$.  We have $\Xi_n \le C'' $ for some positive constant $C''$ independent of $n$.  

Now we use  \eqref{couplingBM0} and \eqref{eq:Kepsilon2} to see that  $\p$-a.s.  
$$\limsup_{n\to\infty} \Xi_n \le 
\limsup_{n\to\infty}
n^{(4-d)/2} \Bcap( n^{1/2} \Theta^{2\varepsilon})
=c_\theta \BScap(M_\theta^{-1/2} \Theta^{2\varepsilon}).
$$

\noindent Applying Fatou's lemma to $C''-\Xi_n$, we get that $$\limsup_{n\to\infty} \e (\Xi_n) \le c_\theta \e (\BScap(M_\theta^{-1/2} \Theta^{2\varepsilon})).$$

\noindent Finally we remark that $\lim_{\varepsilon\to0} \e (\BScap(M_\theta^{-1/2} \Theta^{2\varepsilon}))= \e (\BScap(M_\theta^{-1/2} \Theta))=0$ for $d\ge 6$, by Lemma \ref{lem:nonpoloar}. This ends the proof. 
\end{proof}

\section{Intersection probabilities: Proof of Proposition \ref{p:intersection}}\label{sec:intersection_proba}

In this section,  we shall consider planar tree  using the lexicographic
order in Ulam-Harris setting.   We set $\cU^*=\cup_{n\in \n^*} (\n^*)^n$
and $\cU=\cU^* \cup \{\varnothing\}$,  where $\varnothing$ is called the
root.   For   $u=u_1  \cdots   u_n\in  \cU^*$,   the  set   of  ancestors
$\anc(u)\subset       \cU$      of       $u$      is       given      by
$\anc(u)=\{u_1  \cdots  u_k\,  \colon\,  k\in \{1,  \ldots,  n\}\}  \cup
\{\varnothing\}$.
We see a rooted planar tree  $\bt$ as a
subset    of    $\cU$    such    that:    $\varnothing\in    \bt$;    if
$u\in   \bt\cap \cU^*$   then 
$\anc(u) \subset  \bt$;  if $u\in  \bt$,  then there  exists
$k\in \n$  such that $ui\in \bt$  for all $i\in \{1,  \ldots, k\}$ (with
the convention that  $\varnothing i=i$). The tree $\bt$  is endowed with
the  usual   lexicographic  order~$\prec$ with  the   convention  that
 $\varnothing\prec u$ for
all $u\in \cU^*$.

We set $\cU'=\cU\cup \cU^*_-$, with   $\cU_-^*=\{-u\, \colon u\in
\cU^*\}$, where for the word $u=u_1\cdots u_n\in \cU^*$, we define the word  
$   -u:=(-u_1) \cdots (-u_n)$.
We extend the  lexicographic order $\prec$ on $\cU$ to $\cU'$ as
follows. For $u\in \cU^*_-$, we have $u\prec v$ if: either $v\in \cU$; or
$v\in\cU^*_-$ and $-u\in \anc(-v)$; or $v\in\cU^*_-$ and $-u\not\in
\anc(-v)$  and $-v\prec -u$ (in $\cU^*$). For example, we have:
\[
  (-5)\,\prec\, (-1) \, \prec\, (-1)(-1)(-1)\, \prec\, \varnothing
  \,\prec\, 1
  \, \prec\,   111 \,\prec\, 5.
\]

\subsection{On the discrete tree models}\label{sec:tadj} We will introduce the adjoint tree $\ttree{\text{adj}}$ and invariant tree $\ttree{\infty}$, as well as the associated branching random walks.

 The random planer  tree  $\ttree{\text{adj}}$ is  derived from the
 Galton-Watson tree $\ttree{c}$, with the only modification being made at the root. In the adjoint tree, the root has an offspring distribution $$\widetilde{\mu}(k)=\sum_{j=k+1}^\infty \mu(j), \qquad k\ge 0,$$ instead of $\mu$, while all other vertices retain the original offspring distribution $\mu$. 

 We then construct  the invariant tree $\ttree\infty$  as follows. Start
 with    an    infinite    spine   from    the    root    $\varnothing$:
 \[
   \X=\{\varnothing_0:=\varnothing,   \varnothing_1,   ...,   \varnothing_n,
   ...\}.
 \]
 To $\varnothing$,  we graft on the right a  planar random tree
 $\T^\text{d}_0$   distributed  as   the  critical   Galton-Watson  tree
 $\ttree{c}$     (identifying     the     root    of     $\T_c$     with
 $\varnothing=\varnothing_0$).   Then,  for  any $n\ge  1$,  the  vertex
 $\varnothing_n$ has $i$ children on the  left and $j$ on the right with
 probability   $\mu(i+j+1)$,  and   we  graft   independent  copies   of
 $\ttree{c}$ to  each child  (identifying the root  of the  grafted tree
 with the child).   As a remark, the two planar  subtrees grafted to the
 left,   $\T_n^\text{g}$,  and   to  the   right,  $\T_n^\text{d}$,   of
 $\varnothing_n$ (with  their root identified with  $\varnothing_n$) are
 dependent   random   trees  and,   for   $n\geq   1$,  distributed   as
 $\ttree{\text{adj}}$.  For  $n=0$, the  tree $\T_0^\text{g}$  is simply
 reduced to its root.

 An    element    $u\neq    \varnothing$     of    the    planar    tree
 $\T_n^\text{d}\subset  \cU$  is  coded  by  the  word  $  n  \,  u$  in
 $\T_\infty $,  and an  element $u\neq \varnothing$  of the  planar tree
 $\T_n^\text{g}\subset  \cU$  is  coded  by  the  word  $  (-n)(-u)$  in
 $\T_\infty $,  and $\varnothing_n$  is coded by  $(-n)$ for  $n\geq 1$.
 One can  see the  tree $\T_\infty $  as a subset  of $\cU'$.   Then, we
 order the vertices  of $\T_\infty $ by the  lexicographic order $\prec$
 on $\cU'$:
\[
 \dots\prec \ttree \infty{}(-2)\prec\ttree \infty{}(-1)\prec\ttree
 \infty{}(0)\prec\ttree \infty{}(1)\prec \cdots,
\]
with $\T_\infty  (0)=\varnothing_0=\varnothing$. Moreover, we  denote by
$\ttree+:=\T_\infty     (\n)=      \T_\infty     \cap      \cU$     (and
$\ttree-:=\T_\infty      (\z_-)=\T_\infty     \cap      \cU_-$     where
$\cU_-=\cU_-^*    \cup\{\varnothing\} $) the subgraph  of $\ttree\infty$
with  non-negative (and  non-positive)  labels.  Notice
 $\T_-$ is a tree and 
that the  spine
$\X$ is a subset
of  $\T_-$, but   $\ttree  +$  is
disconnected and thus no longer a tree, see Fig.~\ref{fig:tree_models}. 

The    sequence
  $\T_\infty(0),  \T_\infty(-1),  \T_\infty(-2),  \dots$ is  not  a  depth-first 
sequence for $\ttree-$. We shall  reorganize $\ttree -$ in depth-first
order by considering the planar rooted tree
$\httree-\subset \cU$ built as the union of the spine
$\X$ and the trees $(\T^\text{g}_n)_{n\geq 0}$,
where 
$\varnothing_{n+1}$ is  identified as an extra oldest child of the root of
$\T^\text{g}_n$ (which is still identified with $\varnothing_n$). 
We then  order  the vertices of $\httree-$ using the lexicographic
order $\prec$ on $\cU$:
\[
 \varnothing=\httree -(0)\prec \varnothing_1=\httree -(1)\prec\httree
 -(2)\prec \cdots. 
\]
See an illustration in Fig.~\ref{fig:tree_models}. 

\medskip

For   each  $\alpha\in\{c,\text{adj},\infty,   +,  -\}$,   we  construct
$\tbrw \alpha {}$, the branching  random walk indexed by $\ttree \alpha$
with displacement distribution  $\theta$, in the same way as  we did for
$V_c$ and $\T_c$.   For notational brevity, for any $i\in  \z$, we write
$V_\infty(i):=V_\infty(\T_\infty(i))$ the spatial  position of the $i$th
vertex of $\T_\infty$.
For $a,b\in\z$, we denote
\[
\tbrw \infty{}[a,b]=\{\tbrw \infty{}(i),a\le i\le b\},
\]
and      abbreviate      $\brwrange_\infty$      for      the      range
$\tbrw\infty{}(-\infty,\infty)$.  For  $\alpha\in\{c,\text{adj},+\}$, we
use     similar     notations    $\T_\alpha(i),     V_\alpha(i)$     for
$0\le i\le  \#\T_\alpha$, $i$  finite, with  $\T_\alpha(0)$ the  root of
$\T_\alpha$.   In particular,  $\brwrange_c=\tbrw  c{}[0,\#\T_c]$ is  in
agreement with \eqref{def-Rc}.   Finally, the law of  a branching random
walk started from $x$ is always denoted by $\mathbf P_x$.

Since  for all  $i\in \n$,  there exists  a unique  $j\in \n$  such that
$      \httree     -(i)=\T_\infty      (-j)$,     we      can     define
$\widehat V_-(i)=V_\infty (-j)$.  It is immediate that
\begin{equation*} \widehat \brwrange_-:= \widehat V_-[0, \infty)=V_\infty(-\infty, 0]=:\brwrange_-.\end{equation*}

\begin{figure}[t]
\includegraphics[height=6cm]{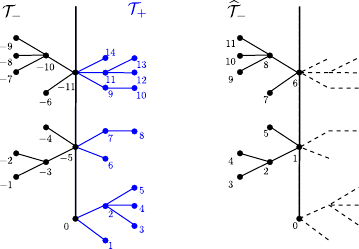}
\caption{\scriptsize
  On     the      left:     a      sample     of
  $\ttree-=\T_\infty (\z_  -)$ in black and  $\ttree+=\T_\infty (\n)$ in
  blue (except for  the root which is black but  belongs also to $\ttree+$)
  so that  $\T_\infty=\T_-\cup\T_+$; notice $\T_\infty(-1)$ is  coded by
  $(-1)(-1)(-1)$, $\T_\infty(-3)$  by $(-1)(-1)$ and  $\T_\infty(-5)$ by
  $(-1)$.  On the  right: the  tree  ${\httree-}$ which  is a  different
  ordering than $\ttree-$ of the same graph.}
\label{fig:tree_models}
\end{figure}

 The distribution of the random
tree $\T_\infty $ is invariant by rerooting at vertex $\T_\infty (n)$
for all $n\in \z$, see \cite[Section 2.2]{bai2020capacity}.  
 Since the distribution $\theta$ of the increments
of the branching random walk is symmetric, we directly deduce that  $\tbrw\infty {}$ is invariant by translation (\cite[Section 2.2]{bai2020capacity} and \cite{LeGall-Lin-range}):
\begin{align}\label{eq:translational_invariance}
\pars*{\tbrw\infty {}(n+i)-\tbrw\infty {}(n)}_{i\in\z}
\overset{\text{(law)}}{=}
\pars*{\tbrw\infty {}(i)-\tbrw\infty {}(0)}_{i\in\z},\quad\forall n\in\z.
\end{align}

Denote  by $(Y_n)_{n\in \n}$  the Lukasiewicz  walk associated to
${\ttree c}$,  that is,
 $(Y_n)$ is a
 centered random  walk  on $\z$  starting from $0$ 
 whose step distribution  is $\P(Y_1=i)=\mu(i+1)$,
$i\ge  -1$, and such that  $Y_{n+1}-Y_n+1$  is the
number of children of $\ttree c(n)$ for all $n<\#\T_c$.
 Similarly,  denote  by $(L_n)$  the  Lukasiewicz  walk  of
$\httree-$, then  its law is  as follows: $L_0=0,L_1=-1$; for  $n\ge 1$,  
conditioning on $\sigma\{L_i, i\le  n\}$, $L_{n+1}-L_n+1$ is distributed
as $\mu$  if $L_n\neq\min_{0\le i\le  n}L_i$, and as  $\widetilde\mu$ if
$L_n=\min_{0\le i\le
  n}L_i$. (To be precise the the Lukasiewicz  walk $(L_n)$ is associated to the forest in $\httree-$, where the edges of the infinite spine $(\varnothing_n)_{n\ge 0}$  are removed. In this setting, $\varnothing_{n+1}$ is not seen as child of $\varnothing_n$.)
By \cite[Section 5]{zhu2021critical}, ${\htbrw - {}}$ can be compared to
$\tbrw c {}$  in terms of $(Y_n)$  and $(L_n)$: for any $0\le  k< m$ and
nonnegative measurable function $F$,
\begin{equation} 
\label{eq:absolutecontinuity1} 
\E_x\Big( F(\tbrw c {}[0, k]) \, 
\big|\,   \#\ttree c= m\Big)=
 \E_x\Big( F(\htbrw - {}[0, k])   \,\Phi_{m,k}(L_k) \Big), \end{equation}
\noindent where 
\[\Phi_{m,k}(\ell):= \frac{m \P(Y_{m- k}= - (\ell +1))}{(m- k) \P(Y_m=-1)}\cdot\] 
 
By the local central limit theorem for the random walk $Y$, for any
$a\in (0,1)$, there exists some $C_a>0$ such that, for any $m\ge 1 $ and
$k\leq  am$,  we have 
$\Phi_{m,k}(\ell)\leq  C_a$ for all $\ell\in \z$. Consequently for
any   nonnegative measurable function $F$,  we have 
\begin{equation} 
\label{eq:absolutecontinuity2} 
\E_x\Big( F(\tbrw c {}[0, \lfloor a m\rfloor ]) \, 
\big|\,   \#\ttree c= m\Big) \le C_a \, 
\E_x\Big( F(\htbrw - {}[0, \lfloor a m\rfloor])  \Big).
\end{equation}

Finally, we  denote the spatial positions of the spine $\X$  
by \begin{equation} \label{def-X}
\begin{aligned}  
   \brwrange_\X &=\{V_\X(0), V_\X(1),  ...\} , 
   \end{aligned}
   \end{equation}
   where for  any $i\ge  0$, $V_\X(i):=  V_\infty(\varnothing_i) \in \z^d $ is  the 
   position of  $i$th point in the spine (recall that $\varnothing_0:=\varnothing$ is the root).  As a matter of  fact, the sequence
   $(V_\X(i))_{i\ge  0}$,  forms a  $\theta$-walk  on  $\z^d$. We denote by $\Y$ the set of points in $\widehat \T_-$ that are not in the spine, with the exception of the root,  see Fig.~\ref{fig:tree_XY}. 
   Let \begin{equation} \label{def-Y}
\begin{aligned}  \Y&:=\{\Y_0, \Y_1, \Y_2, ....\}= \httree-\backslash \{\varnothing_1, \varnothing_2, ...\},
\\   \brwrange_\Y&:= \{V_\Y(0), V_\Y(1), V_\Y(2), ....\}, 
\end{aligned} \end{equation} listed in depth-search order,  $\Y_0:=\varnothing$ and  $V_\Y(0):=V_\infty(\varnothing)$, and for any
$i\ge 0$, $V_\Y(i)=V_\infty(\Y_i)$ denotes the spatial position of the
$i$th vertex of $\Y$.  Then we have  $\httree-= \X\cup \Y$ and $\X\cap \Y= \{\varnothing\}$.

Let  $\brwrange_+:=V_+[0, \infty)$ denote the range of $V_+$. 
By construction, the random sets
$(\brwrange_\X,\brwrange_\Y)$ and
$(\brwrange_\X,\brwrange_+)$ conditionally on
$\{\T^\text{d}_0=\{\varnothing\}\}$
have the same distribution. 
  We deduce that 
\begin{align*}
\widehat \brwrange_-=\brwrange_\X\cup \brwrange_\Y\overset{\text{(law)}}{=}
\parsof*{\brwrange_\X\cup \brwrange_+}{ \T^\text{d}_0=\{\varnothing\}},
\end{align*}
and  that for any nonnegative measurable function $F$,
\begin{equation} \label{eq:t0_trick}
\E_x \Big( F(\brwrange_\Y, \Y)\Big)= \E_x \parsof*{ F(\brwrange_+,
  \ttree + )} {\T^\text{d}_0=\{\varnothing\}} \le \frac{1}{\mu(0)} \E_x \Big(
F(\brwrange_+, \ttree + )  \Big) . 
\end{equation}
This property will be used to compare rare events for $\htbrw - {}$ and $\tbrw + {}$. 
As for $V_\alpha[a, b]$ we use the notation: for any $0\le i \le j$,   \begin{equation}   V_\Y[i, j]:= \{V_\Y(i), V_\Y(i+1), ..., V_\Y(j)\}, \label{Vij}
 \end{equation}
 
\noindent and define for any $m\ge 1$,  \begin{equation}
  \brwrange_{\X, \widehat V_-}^{(m)}:= \brwrange_\X\cap(\htbrw-{}[0,m])
  \quad\text{and}\quad
  \brwrange_{\Y, \widehat V_-}^{(m)}:=\brwrange_\Y\cap(\htbrw-{}[0,m]) . \label{Vym}  \end{equation}

\begin{figure}
\includegraphics[height=6cm]{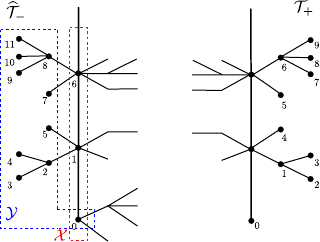}
\caption{\scriptsize An  illustration of $\httree-= \X\cup  \Y$, and its
  comparison  to $\ttree  +$ when  $
  \T_0^\text{d}=\{\varnothing\}$. We  have here that $\X(0)=0, \X(1)=1, \X(2)=6$ and 
  $\Y(0)=0, \Y(1)=\widehat   \T_-(2)$, $  \Y(2)=\widehat  \T_-(3)$,   $\Y(3)=\widehat
  \T_-(4)$, $\Y(4)= \widehat \T_-(5)$ and  $\Y(5)=\widehat \T_-(7)$. }
\label{fig:tree_XY}
\end{figure}

We end this section by a simple estimate on the number of spine points in $\httree -$.

\begin{lemma}\label{lem:easy_Z}
  Assume \eqref{hyp-tree}.  For any  $r \in (0, 1)$, there exist some positive constants $C=C_r$ and $c=c_r$ such that  for all   $n\ge 1$ 
\[
\P\pars*{\#(\httree -[0,n]\cap \X)> r \, n   } \le  C\, e^{-c \, n}.
\] 
\end{lemma}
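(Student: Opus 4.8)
The plan is to express $\#(\httree-[0,n]\cap\X)$ through the i.i.d.\ sizes of the trees grafted along the spine and then apply a Cramér-type lower‑deviation estimate, the crucial input being that the size of a critical Galton--Watson tree has infinite mean. First I would unwind the depth‑first construction of $\httree-$: by construction (cf.\ Figure~\ref{fig:tree_models}), in the depth‑first enumeration of $\httree-$ the grafted trees $\T_0^{\text g},\T_1^{\text g},\dots$ are explored one after the other, each $\T_n^{\text g}$ entirely before $\varnothing_{n+1}$ (the first vertex of $\T_{n+1}^{\text g}$), with $\#\T_0^{\text g}=1$ and, for $n\ge 1$, $\T_n^{\text g}$ distributed as $\ttree{\text{adj}}$, independently. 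Hence the spine vertex $\varnothing_k$ occupies position $\sum_{i=0}^{k-1}\#\T_i^{\text g}$ in this enumeration, so $\varnothing_k\in\httree-[0,n]$ iff $\sum_{i=0}^{k-1}\#\T_i^{\text g}\le n$, and in particular the spine vertices lying in $\httree-[0,n]$ form an initial segment of $\X$. Writing $X_i:=\#\T_i^{\text g}$ (i.i.d.\ for $i\ge 1$, with $X_i\ge 1$), the event $\{\#(\httree-[0,n]\cap\X)>rn\}$ therefore forces $\varnothing_{\lfloor rn\rfloor}\in\httree-[0,n]$, whence, for every $n\ge 1$,
\[
\P\bigl(\#(\httree-[0,n]\cap\X)>rn\bigr)\ \le\ \P\Bigl(\sum_{i=1}^{\lfloor rn\rfloor-1}X_i\le n\Bigr).
\]

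Next I would bound the right‑hand side by an exponential Markov inequality, using $\E[X_1]=+\infty$. Indeed, with probability $\mu(0)>0$ (positive by \eqref{hyp-tree} together with $\mu(1)<1$) the tree $\ttree{\text{adj}}$ contains a subtree distributed as $\ttree c$, and $\E[\#\ttree c]=+\infty$ since each generation of $\ttree c$ has expected size $1$. Thus, for $\psi(t):=\E[e^{-tX_1}]\in(0,1)$, monotone convergence gives $-\psi'(t)=\E[X_1e^{-tX_1}]\to\E[X_1]=+\infty$ as $t\downarrow 0$, so for any $\lambda>0$ one may fix $t_0=t_0(\lambda)>0$ small enough that $\psi(t_0)\le e^{-2\lambda t_0}$; then $\rho:=e^{\lambda t_0}\psi(t_0)\le e^{-\lambda t_0}<1$, and Markov's inequality applied to $e^{-t_0\sum_{i\le m}X_i}$ yields $\P(\sum_{i=1}^m X_i\le\lambda m)\le e^{\lambda t_0 m}\psi(t_0)^m=\rho^m$ for all $m\ge 1$. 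Taking $\lambda:=2/r>1$ and $m:=\lfloor rn\rfloor-1$, one checks that $n\le\lambda m$ and $m\ge rn/2$ whenever $n$ is large enough (in terms of $r$), so that
\[
\P\bigl(\#(\httree-[0,n]\cap\X)>rn\bigr)\ \le\ \rho^{\,\lfloor rn\rfloor-1}\ \le\ \rho^{\,rn/2}\ =\ e^{-cn},\qquad c:=-\tfrac r2\log\rho>0 ;
\]
the finitely many remaining values of $n$ are absorbed into a constant $C=C_r$, which gives the claim.

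The only genuinely delicate step is the bookkeeping in the first paragraph — pinning down the depth‑first positions of the spine vertices in $\httree-$ and the fact that those visited up to time $n$ form an initial segment of $\X$ — but this is read off directly from the construction of $\httree-$. Everything else is routine, the essential analytic ingredient being $\E[\#\ttree c]=+\infty$, which is exactly what makes the estimate valid for every $r\in(0,1)$ rather than only for $r$ close to $1$ (a finite mean for $\#\ttree{\text{adj}}$ would restrict the Cramér bound to $\lambda$ below that mean).
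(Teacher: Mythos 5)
Your proposal is correct and follows essentially the same route as the paper: both reduce the statement to showing that a sum of $\Theta(rn)$ i.i.d.\ copies of $\#\ttree{\text{adj}}$ (which has infinite mean because a critical Galton--Watson tree does) is at most $n$ with exponentially small probability, and both establish that bound via an exponential Markov/Chernoff inequality driven by the fact that the Laplace transform $\psi(t)=\E[e^{-tZ_1}]$ has $\psi'(0^+)=-\infty$. The paper encodes the same idea via $f(s)=e^{s/r}\E[e^{-sZ_1}]$ and $\lim_{s\to0+}f'(s)=-\infty$; your parameterization with $\lambda=2/r$ and $\rho=e^{\lambda t_0}\psi(t_0)$ is equivalent, and your preliminary bookkeeping on the depth-first positions of the spine vertices in $\httree-$ just makes explicit the reduction the paper states more tersely.
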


\begin{proof}
  By construction,  $\httree -$ consists  of i.i.d.\ adjoint trees  in a
  sequence, and we need to show that it is very unlikely for $\httree -$
  to  cover $\lceil r n\rceil$  such  subtrees.  Denote  by  $(Z_i)$ i.i.d.\  random
  variables     distributed    as     the     total    population     of
  $\#\ttree{\text{adj}}$.  Then it suffices to show that
\[
\P\pars*{Z_1+\dots+Z_{\lceil r n\rceil -1} \le n} \le C\, e^{-c\, n},
\]

\noindent where this probability is equal to $1$ if $\lceil r n\rceil =1$. Let for $s\ge 0$, 
\[
f(s):=e^{s/r}\E\pars*{e^{-s Z_1}}.
\]
\noindent Note that $\E(Z_1)=\infty$.  Then 
$f'(s)=\E[(\frac1{r}-Z_1) e^{(\frac1{r}-Z_1)s}]$  
is continuous on $s\in(0,\infty)$ with $\lim_{s\rightarrow 0+}f'(s)=-\infty$.
Combined with $f(0)=1$, we know that $f(s_0)<1$ for some $s_0>0$. Therefore,
\[
\P\pars*{Z_1+\dots+Z_{\lceil r n\rceil-1} \le n}\le e^{s_0 n}\pars*{\E\pars*{e^{-s_0 Z_1}}}^{\lceil r n\rceil -1}\le \E\pars*{e^{-s_0 Z_1}}^{-1} \, \pars*{f(s_0)}^{r n},
\]
and we conclude by taking $c=- r \log f(s_0)>0$ and $C$ large enough.
\end{proof}

\subsection{Some known results. } \label{sub:known} We collect here some preliminary estimates on the random walk and the branching random walk.

\begin{lemma} \cite[Proposition 2.1.2, Theorem 2.3.9]{Lawler-Limic} \label{lem:2.1.2}
Let $(\xi_n)_{n\ge 0}$ be a centered, aperiodic and irreducible random
walk in $\z^d$ with finite variance. There exists some $C>0$ such that
for every $s>0$ and $n\geq 1$,
\begin{eqnarray}    \label{max-rw}
\rwp_0\pars*{\max_{0\le i\le n}|\xi_i|\ge s\sqrt n}&\le  & C\, s^{-2},
\\   
\sup_{x\in \z^d}\rwp_0(\xi_n=x)&\le & C\, n^{-d/2}. \label{rw-local}
 \end{eqnarray}
\end{lemma}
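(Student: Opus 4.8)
The plan is to prove the two bounds separately, since each is classical. For the maximal inequality \eqref{max-rw} I would argue via Doob's $L^2$ inequality: each coordinate process $(\xi_i^{(k)})_{i\ge 0}$, $k=1,\dots,d$, is a mean-zero random walk with independent increments, hence a martingale, so $((\xi_i^{(k)})^2)_{i\ge 0}$ is a nonnegative submartingale and $\rwe_0\bigl[\max_{0\le i\le n}(\xi_i^{(k)})^2\bigr]\le 4\,\rwe_0\bigl[(\xi_n^{(k)})^2\bigr]=4n\,(M_\xi)_{kk}$. Since $\max_{0\le i\le n}|\xi_i|^2\le\sum_{k=1}^d\max_{0\le i\le n}(\xi_i^{(k)})^2$, summing over $k$ gives $\rwe_0\bigl[\max_{0\le i\le n}|\xi_i|^2\bigr]\le 4n\,\operatorname{tr}(M_\xi)$, and Markov's inequality applied to the event $\{\max_{0\le i\le n}|\xi_i|^2\ge s^2n\}$ yields \eqref{max-rw} with $C=4\operatorname{tr}(M_\xi)$. (Alternatively, one can combine the one-time Chebyshev bound $\rwp_0(|\xi_n|\ge \tfrac{s}{3}\sqrt n)\le 9\operatorname{tr}(M_\xi)s^{-2}$ with a reflection-type maximal inequality of Ottaviani--L\'evy; note that for small $s$ the claimed bound is vacuous once $C$ is large.)

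For the on-diagonal local limit bound \eqref{rw-local} I would use Fourier inversion on the torus. Set $\phi(t):=\rwe_0\bigl[e^{\mathrm{i}\,t\cdot\xi_1}\bigr]$ for $t\in\r^d$; then $\rwp_0(\xi_n=x)=(2\pi)^{-d}\int_{[-\pi,\pi]^d}e^{-\mathrm{i}\,t\cdot x}\phi(t)^n\,\rd t$, so $\sup_{x\in\z^d}\rwp_0(\xi_n=x)\le(2\pi)^{-d}\int_{[-\pi,\pi]^d}|\phi(t)|^n\,\rd t$. By irreducibility and aperiodicity, $|\phi(t)|<1$ for every $t\in[-\pi,\pi]^d\setminus\{0\}$, so by continuity and compactness there is, for each small $\delta>0$, a constant $\rho=\rho(\delta)>0$ with $|\phi(t)|\le e^{-\rho}$ on $\{\delta\le|t|\le\pi\sqrt d\}$; moreover the finite-variance expansion $\phi(t)=1-\tfrac12 t^{T}M_\xi t+o(|t|^2)$ gives $|\phi(t)|\le e^{-c|t|^2}$ on $\{|t|\le\delta\}$ for some $c>0$ once $\delta$ is small. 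Splitting the integral at $|t|=\delta$, the outer part is at most $(2\pi)^{-d}(2\pi)^{d}e^{-\rho n}$, which is exponentially small and hence $\le C'n^{-d/2}$, while the inner part is at most $(2\pi)^{-d}\int_{\r^d}e^{-cn|t|^2}\,\rd t=(2\pi)^{-d}(\pi/(cn))^{d/2}$; adding the two bounds gives \eqref{rw-local}.

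The only genuinely non-routine point is the strict inequality $|\phi(t)|<1$ for $t\neq 0$ on the torus: this is where irreducibility and aperiodicity enter, via the standard fact that $|\phi(t)|=1$ would force the support of $\xi_1$ to lie in a translate of a proper sublattice of $\z^d$, contradicting those hypotheses. Everything else is routine. Since both estimates are textbook results (Lawler--Limic, Proposition 2.1.2 and Theorem 2.3.9), the cleanest option within the paper is simply to cite them, the sketch above serving only to record why they hold in the stated generality.
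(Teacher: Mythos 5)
Your two arguments are both correct and are essentially the standard textbook derivations. The paper itself does not prove this lemma at all: it simply cites Lawler--Limic, noting that \eqref{max-rw} is Proposition 2.1.2\,(a) there with $k=1$, and \eqref{rw-local} is Theorem 2.3.9. Your Doob-$L^2$ argument for the maximal inequality (coordinatewise martingale, $\rwe_0[\max_i(\xi_i^{(k)})^2]\le 4n(M_\xi)_{kk}$, sum and apply Markov) is exactly the proof underlying the cited Proposition 2.1.2, and your Fourier argument for the on-diagonal bound (split the characteristic-function integral at $|t|=\delta$, use the quadratic expansion $\phi(t)=1-\tfrac12 t^{T}M_\xi t+o(|t|^2)$ near $0$ and uniform strict decay $|\phi|\le e^{-\rho}<1$ away from $0$ by aperiodicity and irreducibility) is the standard route to Theorem 2.3.9. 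One point worth flagging for completeness: the Gaussian bound $|\phi(t)|\le e^{-c|t|^2}$ near the origin requires $M_\xi$ to be positive definite; this does hold here since a centered irreducible walk on $\z^d$ cannot have its support in a proper affine subspace, but it is an implicit use of irreducibility in addition to the one you highlight for $|\phi(t)|<1$ on the punctured torus. As you yourself observe at the end, within the paper the citation suffices; your sketch is a correct and faithful account of why the cited results hold.
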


Indeed, 
 \cite[Proposition 2.1.2 (a)]{Lawler-Limic}, with $k=1$,  gives \eqref{max-rw}, and  \eqref{rw-local} follows from \cite[Theorem 2.3.9]{Lawler-Limic}.

Recall $\tbrw+{}$ is the branching random walk in $\z^d$ indexed by $\ttree +$ defined in Section \ref{sec:tadj}.

\begin{lemma} {\rm (\cite[Lemma 4.13]{bai2022convergence})}\label{lem:reversed_intersection}
 Let $d=5$.  Assume \eqref{hyp-tree},  \eqref{hyp-brw} with $q>4$,  and
 \eqref{hyp-xi}.
For any $M>0$, there exist  some $\upsilon, C>0$ such that for all $\varepsilon\in (0,1)$, 
\begin{equation}
\limsup_{n\to\infty}\P_0\pars*{\max_{|x|\le   \varepsilon n^{1/2}, x\in \z^5}  \rwp_x(\xi[0, \infty)\cap \tbrw+{}[0, n^2]=\emptyset) \ge \varepsilon^{\upsilon}} \le C \,  \varepsilon^{M}. \label{reversed_intersection}
\end{equation}
\end{lemma}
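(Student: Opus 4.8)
The plan is to prove the much stronger statement that, except on a $\P_0$–event of probability $\le C\varepsilon^M$, the range $A_n:=\tbrw+{}[0,n^2]$ is ``thick around the origin at a definite fraction of the dyadic scales below $n^{1/2}$'', and that this thickness forces $\rwp_x(\xi[0,\infty)\cap A_n=\emptyset)\le\varepsilon^{\upsilon}$ \emph{simultaneously} for all $x\in\ball(0,\varepsilon n^{1/2})$. Set $R_j:=2^j\varepsilon n^{1/2}$, let $J:=\#\{j\ge0:R_j\le n^{1/2}\}=\log_2(1/\varepsilon)+O(1)$, put $\mathcal A_j:=\ball(0,\tfrac74R_j)\setminus\ball(0,\tfrac54R_j)$, and call scale $j$ \emph{good} if $\Cap(A_n\cap\mathcal A_j)\ge c_1R_j^{\,d-2}$, where $\Cap$ is the Newtonian capacity for $\xi$. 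The first, deterministic, step is the estimate: if $j$ is good, then
\[
\pi_j:=\min_{y\in\partial\ball(0,R_j)}\rwp_y\bigl(\xi\text{ hits }A_n\cap\mathcal A_j\text{ before exiting }\ball(0,2R_j)\bigr)\ \ge\ c_0>0,
\]
with $c_0=c_0(c_1,d)$. This follows from the last–exit identity $\rwp_y(\xi\text{ hits }B\text{ before exiting }\ball(0,2R_j))=\sum_{w\in B}G_\xi^{\ball(0,2R_j)}(y,w)\,e_B^{\ball(0,2R_j)}(w)$, the bound $G_\xi^{\ball(0,2R_j)}(y,w)\gtrsim R_j^{\,2-d}$ valid uniformly for $y\in\partial\ball(0,R_j)$ and $w\in\mathcal A_j$, and $\sum_w e_B^{\ball(0,2R_j)}(w)=\Cap^{\ball(0,2R_j)}(B)\ge\Cap(B)$, applied to $B=A_n\cap\mathcal A_j$.

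Next I would turn good scales into a hitting bound. Fix $x$ with $|x|\le\varepsilon n^{1/2}$ and let $\tau_j$ be the exit time of $\xi$ from $\ball(0,R_j)$; by transience in $d=5$ these are finite and eventually strictly increasing, and after a routine truncation of the rare large jumps of $\xi$ — using $\rwe_0[|\xi_1|^3]<\infty$ from \eqref{hyp-xi} together with \eqref{max-rw}–\eqref{rw-local}, the only place the third moment enters — we may treat $\xi_{\tau_j}$ as lying on $\partial\ball(0,R_j)$, so that $[\tau_j,\tau_{j+1})$ is exactly the sojourn in $\ball(0,2R_j)$ after time $\tau_j$. Since $\{\xi[0,\infty)\cap A_n=\emptyset\}\subset\bigcap_{j<J}\{\xi\text{ avoids }A_n\cap\mathcal A_j\text{ on }[\tau_j,\tau_{j+1})\}$ and the annuli $\mathcal A_j$ are disjoint, iterating the strong Markov property at $\tau_0<\tau_1<\cdots$ gives
\[
\rwp_x\bigl(\xi[0,\infty)\cap A_n=\emptyset\bigr)\ \le\ \prod_{j<J}(1-\pi_j)\ \le\ (1-c_0)^{\#\{j<J:\,j\text{ good}\}},
\]
uniformly in $x\in\ball(0,\varepsilon n^{1/2})$ — so no discretisation in $x$ is needed. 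Hence, on $\mathcal G_n:=\{\#\{j<J:j\text{ good}\}\ge J/2\}$ we have $\max_{|x|\le\varepsilon n^{1/2}}\rwp_x(\xi[0,\infty)\cap A_n=\emptyset)\le(1-c_0)^{J/2}\le\varepsilon^{\upsilon}$ for any $\upsilon\le\tfrac13\log_2\tfrac1{1-c_0}$ (for $\varepsilon$ small; for $\varepsilon$ bounded away from $0$ the claim is trivial), so the event inside the probability in \eqref{reversed_intersection} is contained in $\mathcal G_n^{\,c}$, and it remains to show $\P_0(\mathcal G_n^{\,c})\le C\varepsilon^M$.

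For this I would use two facts. First, for each single $j$, $\P_0(j\text{ not good})\le p$ with $p=p(c_1,d)$ as small as desired by taking $c_1$ small: rescaling space by $n^{-1/2}$ and comparing $A_n$ — through \eqref{eq:t0_trick}, \eqref{eq:absolutecontinuity2} and Lemma \ref{lem:easy_Z} — to the range $\brwrange_{n^2}$ of the critical branching random walk conditioned on $\#\ttree c=n^2$, and thence to the Brownian snake range $\RR$, the event $\{j\text{ not good}\}$ becomes a lower–tail event for the capacity charged by $\RR$ in an annulus of scale $\asymp 2^j\varepsilon\in(0,1]$, which is small by a Bessel$(3)$–type estimate (the snake deposits at least a Bessel$(3)$ amount of local time near the centre of any ball) together with the self–similarity of $\RR$. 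Second, the events $\{j\text{ not good}\}$ at well–separated scales are, up to a small error, independent: in the discrete picture $\mathcal A_j$ is essentially filled by the grafted adjoint trees rooted at spine vertices with index in a time window of length $\asymp R_j^{\,2}$, windows at distinct scales carrying independent trees, while the spine walk $V_\X$ lies at scale $\asymp R_j$ at time $\asymp R_j^{\,2}$ for all but a fraction of the scales (near–independence of a random walk across dyadic scales, failing with probability $\le\varepsilon^{M'}$). Combining these, the number of non–good scales is stochastically dominated by a $\mathrm{Bin}(J,p)$ plus an event of probability $\varepsilon^{M'}$, whence $\P_0(\mathcal G_n^{\,c})\le 2^J p^{J/2}+\varepsilon^{M'}=(4p)^{J/2}+\varepsilon^{M'}\lesssim\varepsilon^{\frac12\log_2(1/(4p))}+\varepsilon^{M'}$. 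Given $M$, choosing $c_1$ so small that $\tfrac12\log_2\tfrac1{4p}\wedge M'\ge M$, and then $\upsilon:=\tfrac13\log_2\tfrac1{1-c_0}>0$ and $C$ large, yields \eqref{reversed_intersection}.

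The hard part is the second fact above: extracting from the branching/recursive structure — the invariant tree's spine carrying i.i.d.\ grafted trees, or equivalently the self–similarity of the Brownian snake — a genuine concentration for the number of scales at which the branching range is thick, so that $\asymp\log_2(1/\varepsilon)$ roughly–independent, each–likely events generate the factor $\varepsilon^M$; this is precisely why the conclusion is stronger than a mere $o_n(1)$. The cleanest implementation is the comparison to $\brwrange_{n^2}$ together with the scaling–limit and capacity estimates for the conditioned branching random walk established in \cite{bai2022convergence} (see also \cite{bdh-23+,asselah2023local}), while the large–jump truncation of $\xi$ on the spheres $\partial\ball(0,R_j)$ is the secondary technical point and is what necessitates the third–moment assumption in \eqref{hyp-xi}.
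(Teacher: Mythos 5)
The paper does not prove this lemma from scratch: it is imported verbatim from \cite[Lemma 4.13]{bai2022convergence}, and the only thing the authors verify is that the proof given there for simple random walks goes through for a general $\xi$ satisfying \eqref{hyp-xi}, the single point requiring attention being \cite[Lemma 4.6]{bai2022convergence}, which needs the precise Green function asymptotics $g_{(\xi)}(x)=(C+o(1))|x|^{-3}$; these follow from Uchiyama's theorem precisely because of the third-moment assumption. Your attempt rebuilds the whole statement, which is far more than the paper does, and it also misplaces the role of the third moment: you claim it enters only through overshoot control at the spheres $\partial\ball(0,R_j)$, whereas it is what guarantees the sharp Green function asymptotics on which the underlying hitting estimates rely.

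As a self-contained argument, your multi-scale skeleton (good scales defined by a capacity lower bound in disjoint dyadic annuli, last-exit decomposition, iteration of the strong Markov property at the exit times $\tau_j$, uniformity in $x\in\ball(0,\varepsilon n^{1/2})$) is sound and is in the spirit of the cited proof. But the two probabilistic inputs are asserted rather than proved, and they are exactly the content of the lemma. First, the single-scale estimate $\P_0(j\text{ not good})\le p$ with $p$ small as $c_1\to 0$: a lower bound on the occupation or local time of the branching range in $\mathcal A_j$ (your ``Bessel(3)-type estimate'') does not by itself bound $\Cap(A_n\cap\mathcal A_j)$ from below; one must also bound the energy $\sum_{w,w'}|w-w'|^{-3}$ of the occupation measure from above, i.e.\ control the two-point function of $V_+$ restricted to the annulus, which is a genuine second-moment computation and not a consequence of self-similarity alone. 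Second, the claimed near-independence of the events $\{j\text{ good}\}$ across dyadic scales, needed to convert a single-scale probability $p$ into the bound $\varepsilon^{M}$: which grafted subtrees of the invariant tree charge which annulus is determined jointly by the spine walk and by the (heavy-tailed) sizes of the grafted trees, and the stochastic domination of the number of bad scales by a binomial is not a routine consequence of the grafted trees being independent. Without these two steps carried out, the proposal is a plausible outline of the argument in \cite{bai2022convergence}, not a proof.
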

\begin{proof}
Though \cite{bai2022convergence} only considered simple random walks, a
general random walk $\xi$ that is centered with finite variance,
aperiodic and irreducible will validate every argument except for
\cite[Lemma 4.6]{bai2022convergence}, which relies on the asymptotics of
the  Green's function, $g_{(\xi)}(x)=\sum_{n\ge 0}\rwp_0(\xi_n=x).$
Given the existence of finite third moment of $\xi$, we can apply  
\cite[Theorem 2]{MR1625467} (with $N=5,m=0$) to get 
$g_{(\xi)}(x)=(C+o(1))|x|^{-3}$.
Thus we still get \cite[Lemma 4.6]{bai2022convergence} for $\xi$
satisfying~\eqref{hyp-xi}
instead of a simple random walk. 
\end{proof}

We end this subsection by some estimates  on the branching random walks
$V_+$ and $V_c$ and on the graph distance on $\T_+$. We consider $\T_+$
as a subgraph of $\T_\infty$ and, for all $0\le i<j$,   we    denote by ${\rm d_{gr}}(\T_+(i), \T_+(j))$    the graph distance in $\T_\infty$  between the two vertices $\T_+(i)$ and $\T_+(j)$, which is the number of edges in the geodesic path  connecting $\T_+(i)$ to $\T_+(j)$ in $\T_\infty$.

\begin{lemma}\label{lem:3.7}
Assume \eqref{hyp-tree} and \eqref{hyp-brw}  with some $q>4$.  

(i) For any $0< \zeta< \frac14 - \frac1{q} $,    there exists   some positive constants $a'$ and $C=C_{a'}$ such that  all $ n\ge1$ and $0<\eta<1$, we 
have  
\begin{equation}   
\P_0\Big( 
\max_{0 \le i< j \le n,\,  0\le  j -i\le \eta  n} |  \tbrw+{}(j)-   \tbrw+{}(i)| \ge \eta^\zeta n^{1/4}\Big) 
\le C  \, \eta^{a'}.  \label{increment-V+}
 \end{equation}

(ii) There exists some $C=C_q>0$ such that for any $n\ge 1$, \begin{equation}   \E_0\parsof*{ \max_{z\in \brwrange_c}|z|^q}{\#\ttree c=n} \le C\,  n^{q/4}. \label{max-Vc}
 \end{equation}

(iii)   For any $r\ge  1$, there is some positive constant $C$ such that for any $0\le i<j$,  \begin{equation}    \E\Big({\rm d_{gr}}(\T_+(i), \T_+(j))^{r} \Big) \le C \, (j-i)^{r/2}.
  \label{dTiTj}\end{equation}

 \end{lemma}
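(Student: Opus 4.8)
The plan is to treat the three parts separately, each reducing to a moment estimate for a random walk or a branching random walk indexed by a tree. For part (iii), the idea is that $\mathrm{d_{gr}}(\T_+(i),\T_+(j))$ can be read off from the Lukasiewicz walk $(Y_n)$ (more precisely from the Lukasiewicz-type exploration coding $\T_+$): the graph distance between the $i$th and $j$th vertices in depth-first order equals $H(i)+H(j)-2\min_{i\le k\le j}H(k)$, where $H$ is the height process, which in turn is controlled by the number of descents of the Lukasiewicz walk on $[i,j]$, i.e.\ essentially by $Y_i+Y_j-2\min_{i\le k\le j}Y_k$ up to constants coming from the offspring law. Since $(Y_n)$ is a centered random walk with finite variance (as $\mu$ has finite variance by \eqref{hyp-tree}), the increment $Y_j-Y_i$ and the running-min term are each bounded in $L^r$ by $C(j-i)^{r/2}$ for every $r\ge 1$ by the Marcinkiewicz--Zygmund and Doob inequalities; summing these bounds gives \eqref{dTiTj}. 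One has to be slightly careful that $\T_+$ is not a tree but a forest-like object, yet the relevant quantity is still the difference of heights in $\T_\infty$, and the $L^r$ bound is uniform over where the two vertices sit.

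For part (i), I would decompose the spatial increment $V_+(j)-V_+(i)$ along the geodesic in $\T_\infty$ joining $\T_+(i)$ to $\T_+(j)$: it is a sum of $N:=\mathrm{d_{gr}}(\T_+(i),\T_+(j))$ i.i.d.\ $\theta$-steps. Conditionally on $N$, a tail bound on $|V_+(j)-V_+(i)|$ follows from the heavy-tail hypothesis in \eqref{hyp-brw} ($\theta(\ball(0,r)^c)\le c\,r^{-q}$) combined with a truncation argument: the contribution of steps of size $\le \eta^{\zeta}n^{1/4}$ is handled by a standard maximal/Bernstein estimate since its variance is $\lesssim N$, while the probability that some step exceeds $\eta^{\zeta}n^{1/4}$ is $\lesssim N\,(\eta^\zeta n^{1/4})^{-q}$. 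Then I would take a union bound over the $\lesssim n^2$ pairs $(i,j)$ with $j-i\le\eta n$, using \eqref{dTiTj} with a large moment $r$ to say $N\lesssim (\eta n)^{1/2}$ with overwhelming probability (for pairs with $j-i\le \eta n$, this is $\lesssim \eta^{1/2}n^{1/2}$, so a walk of that length has maximal displacement $\lesssim \eta^{1/4-\delta}n^{1/4}$ with high probability); the constraint $\zeta<\tfrac14-\tfrac1q$ is exactly what makes the heavy-tail term and the Gaussian term both summable after optimizing the truncation level and the power of $\eta$ lost to the union bound. The main obstacle here is bookkeeping the three competing scales — the union-bound factor $n^2$, the typical graph distance $(\eta n)^{1/2}$, and the truncation level — so that the final exponent $a'>0$ comes out positive; this is where $q>4$ is essential.

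For part (ii), I would use the absolute-continuity relation \eqref{eq:absolutecontinuity2} (or rather the comparison with $\widehat V_-$, hence via \eqref{eq:t0_trick} with $V_+$) to replace the conditioned-BRW quantity $\E_0(\max_{z\in\brwrange_c}|z|^q\mid \#\T_c=n)$ by an unconditioned expectation over $V_+$ or $\widehat V_-$ on the first $n$ explored vertices, up to a constant $C_a$; then $\max_{0\le k\le n}|V_+(k)|$ is bounded via part (i) with $\eta$ of order $1$ (or a direct version of the same truncation argument), giving $\E_0[\max_{k\le n}|V_+(k)|^q]\lesssim n^{q/4}$, which is the content of \eqref{max-Vc}. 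I expect part (i) to be the hard part; parts (ii) and (iii) are comparatively routine once (i) and the Lukasiewicz coding are in hand.
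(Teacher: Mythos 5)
The paper's proof of this lemma is essentially a pointer: parts (i) and (ii) are imported verbatim from the companion paper \cite{bai2022convergence} (formulas (4.11) and (4.24) there), and part (iii) is reduced, via the invariance of $\T_\infty$ under rerooting, to the distribution of $\mathrm{d_{gr}}(\T_+(j-i),\varnothing)$, for which the moment bound is again taken from \cite{bai2022convergence}. Your proposal instead tries to reconstruct the three estimates from first principles, which is a legitimate alternative goal, but several of the steps as sketched do not go through.

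For part (iii), the identity $\mathrm{d_{gr}}(\T(i),\T(j))=H(i)+H(j)-2\min_{i\le k\le j}H(k)$ is a statement about a \emph{single} plane tree traversed in depth-first order; $\T_+$ is a disconnected forest inside $\T_\infty$, the distance is taken in $\T_\infty$ (possibly through the spine $\X$, which lies in $\T_-$), and the labels $\T_+(i)$ skip over the spine vertices $\varnothing_n$. So the depth-first identity does not apply as stated. Moreover, the height process $H$ is not ``essentially $Y_i+Y_j-2\min Y$'': the exact relation is $H(n)=\#\{k<n:\ Y_k=\min_{k\le\ell\le n}Y_\ell\}$, a count of weak records, which merely has the same $\sqrt n$ scaling as $Y$. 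Getting $L^r$ bounds for $H$ from that identity requires its own argument, which is precisely what the cited reference supplies; your Marcinkiewicz--Zygmund/Doob reduction would control $Y$ but does not directly control $H$. The paper avoids all of this by rerooting, which turns the two-point distance into a single root-to-$k$th-vertex distance in $\T_\infty$.

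For part (i), the union bound over the $\asymp\eta n^2$ pairs $(i,j)$ with $j-i\le\eta n$ is too lossy. With only moment bounds $\E[N^r]\lesssim(j-i)^{r/2}$ from (iii), you control the graph distance $N$ up to a factor $A$ at probability cost $A^{-r}$, independent of $n$; to survive a union bound over $\asymp n^2$ pairs you would need to choose $A$ growing like a power of $n$, and then the allowed displacement $\sqrt{A\,(\eta n)^{1/2}}$ overshoots the target $\eta^\zeta n^{1/4}$. What is needed is a block or chaining argument: split $[0,n]$ into $\asymp\eta^{-1}$ blocks of length $\eta n$, use translation invariance \eqref{eq:translational_invariance}, and control the maximal displacement within a single block by a GRR-type estimate. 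This is exactly how Lemma~\ref{l:max_diam2-V+} in the paper proceeds (see Lemma~\ref{l:increment}), and it is also the structure of the argument in \cite{bai2022convergence}. Your reduction of $V_+(j)-V_+(i)$ to a sum of $N$ i.i.d.\ $\theta$-steps along the geodesic, and the truncation to separate the Gaussian and heavy-tail contributions, are both sound ingredients; the gap is the combinatorics of the union bound.

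For part (ii), the absolute-continuity inequality \eqref{eq:absolutecontinuity2} controls only $F(V_c[0,\lfloor am\rfloor])$ for fixed $a<1$; it cannot be applied with $k=m$ because the density $\Phi_{m,k}$ degenerates as $k\uparrow m$. So you cannot directly pass from the conditional law of the \emph{full} range $\brwrange_c$ to the unconditioned $\widehat V_-$ as written. To cover the whole range you must add the reversed depth-first observation that $\T_c\subset\T_c[0,\tfrac35\#\T_c]\cup\overleftarrow{\T_c}[0,\tfrac35\#\T_c]$ on the event that the height of $\T_c$ is not too large (cf.\ \eqref{eq:j0} and Fig.~\ref{fig:tree_overlap}), and use the symmetry between the forward and reversed explorations. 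With that extra step your reduction to (i) would be fine. So the overall plan for (ii) is salvageable, but as stated it has a genuine gap.
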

  \begin{proof} The statements (i) and (ii) come  from  \cite[(4.11), (4.24)]{bai2022convergence}.
 
  For (iii), we   deduce from the invariance  $\T_\infty $ by
  rerooting  
  that for any $j>i$, $   {\rm d_{gr}}(\T_+(i), \T_+(j)) $ is distributed as ${\rm d_{gr}}(\T_+(j-i), \varnothing) $. By using  \cite[(4.7), (4.9), (4.10)]{bai2022convergence}, and we note that (4.9) and (4.10)  hold  for any exponent $r \ge 1$ instead of $q/2$ there, we get \eqref{dTiTj}.
    \end{proof}

\subsection{Increments of \texorpdfstring{$V_+$}{}}\label{sub:increment}
 We first present a general result to estimate  the increments of a discrete-time process:
 
 \begin{lemma}\label{l:increment} Let $\alpha \in (0,1], b>0, p> \max(\frac1\alpha, \frac{b}{\alpha})$ and $ H, K\ge 0$ and $n\ge 1$. Let  $\QQ_0, ..., \QQ_n$ be real-valued random variables such that   for any $0\le i \le j \le n$, \begin{equation}   \e (|\QQ_j- \QQ_i|^p) \le \, H (j-i)^{\alpha p } + K (j-i)^b .  \label{GRR-1}   \end{equation}  For any $\max(1, b) < \gamma< \alpha p$,  there exists some positive constant $c=c(\alpha, \gamma, p, b)$ only depending on $\alpha, \gamma, p, b$ such that \begin{equation}    \e\Big( \max_{0\le i \le j \le n} |\QQ_j-\QQ_i|^p\Big) \le c\, \big( H n^{\alpha p} + K n^{\gamma}\big).  \label{GRR-2} \end{equation}
 \noindent In particular, if  $\QQ_0=0$, then \begin{equation}    \e\Big( \max_{0\le i  \le n} |\QQ_i|^p\Big) \le c \,\big( H n^{\alpha p} + K n^{\gamma}\big).  \label{GRR-3} \end{equation}
 \end{lemma}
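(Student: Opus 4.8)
The plan is to follow the classical Garsia–Rodemich–Rumsey / chaining argument, adapted to the discrete setting, but organized in a dyadic way so that the two terms on the right of \eqref{GRR-1} are handled separately. First I would reduce \eqref{GRR-2} to the one-sided statement: it suffices to bound $\e\big(\max_{0\le i\le n}|\QQ_i-\QQ_0|^p\big)$, since $|\QQ_j-\QQ_i|\le |\QQ_j-\QQ_0|+|\QQ_i-\QQ_0|$ gives $\max_{i\le j}|\QQ_j-\QQ_i|^p\le 2^p\max_i |\QQ_i-\QQ_0|^p$ (after translating so that the role of $\QQ_0$ is played by an arbitrary reference index, one still only loses a constant). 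So \eqref{GRR-3} is the heart of the matter, and then \eqref{GRR-2} follows with a worse constant.

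For \eqref{GRR-3}, I would use the standard dyadic chaining. Assume first $n=2^N$ is a power of $2$ (the general case follows by monotonicity in $n$, enlarging $n$ to the next power of $2$, which only changes constants since $n\mapsto n^{\alpha p}$ and $n\mapsto n^\gamma$ are doubling). For each level $k=0,1,\dots,N$ partition $\{0,1,\dots,n\}$ into the dyadic blocks of length $2^{N-k}$, and write any index $i$ as an endpoint at the finest level reached by successively refining; then telescoping along the dyadic tree gives $|\QQ_i-\QQ_0|\le \sum_{k=0}^{N} D_k$ where $D_k:=\max_{\ell}|\QQ_{(\ell+1)2^{N-k}}-\QQ_{\ell 2^{N-k}}|$ is the maximal increment over a dyadic block of generation $k$. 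By \eqref{GRR-1} applied to each of the $2^k$ blocks of length $2^{N-k}$, and a union bound,
\[
\e(D_k^p)\le \sum_{\ell=0}^{2^k-1}\e\big(|\QQ_{(\ell+1)2^{N-k}}-\QQ_{\ell2^{N-k}}|^p\big)\le 2^k\Big(H\,2^{(N-k)\alpha p}+K\,2^{(N-k)b}\Big).
\]
Hence $\|D_k\|_p\le \big(2^k H 2^{(N-k)\alpha p}\big)^{1/p}+\big(2^k K 2^{(N-k)b}\big)^{1/p}= H^{1/p}2^{(N-k)\alpha}2^{k/p}+K^{1/p}2^{(N-k)b/p}2^{k/p}$. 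Summing over $k$ with Minkowski's inequality, $\|\max_i|\QQ_i-\QQ_0|\|_p\le \sum_{k=0}^N \|D_k\|_p$: the $H$-series is $H^{1/p}2^{N\alpha}\sum_k 2^{-k(\alpha-1/p)}$, which converges because $p>1/\alpha$ gives $\alpha-1/p>0$, yielding $\lesssim H^{1/p}n^{\alpha}$; the $K$-series is $K^{1/p}2^{Nb/p}\sum_k 2^{-k(b/p-1/p)}2^{k\cdot 0}$—wait, more carefully it is $K^{1/p}\sum_k 2^{(N-k)b/p+k/p}=K^{1/p}2^{Nb/p}\sum_k 2^{k(1-b)/p}$. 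If $b\le 1$ this sum is $O(1)$ and we get $\lesssim K^{1/p}n^{b/p}\le K^{1/p}n^{\gamma/p}$; if $b>1$ the sum is dominated by its last term $2^{N(1-b)/p}$, giving $K^{1/p}2^{N/p}=K^{1/p}n^{1/p}\le K^{1/p}n^{\gamma/p}$ again (using $\gamma>1$ in that case, and $\gamma>b$ is not even needed here—only $\gamma\ge\max(1,b)\cdot$ something; in fact one checks the exponent produced is $\max(b,1)/p<\gamma/p$ whenever $\gamma>\max(1,b)$, which is the hypothesis). Raising to the $p$-th power gives \eqref{GRR-3}; the cross term in $(a+b)^p\le 2^p(a^p+b^p)$ is harmless.

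The main obstacle is purely bookkeeping: one must verify that the geometric series in $k$ indeed converge (or are dominated by their extreme term) under the stated condition $p>\max(1/\alpha,b/\alpha)$ together with $\max(1,b)<\gamma<\alpha p$, and that the $K$-term exponent that comes out, namely $\max(1,b)/p$, can be absorbed into $\gamma/p$—this is exactly why the hypothesis asks for $\gamma>\max(1,b)$ rather than $\gamma>b/\alpha$. There is also the minor nuisance of passing from $n$ a power of $2$ to general $n$ and from the one-sided maximum over $|\QQ_i-\QQ_0|$ to the two-sided maximum over $|\QQ_j-\QQ_i|$; both cost only universal constants depending on $\alpha,\gamma,p,b$, which is all that \eqref{GRR-2}–\eqref{GRR-3} claim. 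I do not foresee any genuinely delicate point beyond getting these exponents to line up.
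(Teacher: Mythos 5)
Your proof is correct and takes a genuinely different, more self-contained route from the paper's. The paper passes to continuous time: it sets $\QQ^{(n)}_t$ to be the linear interpolation of $n^{-\alpha}\QQ_{\lfloor nt\rfloor}$ on $[0,1]$, proves $\e\big(|\QQ^{(n)}_t-\QQ^{(n)}_s|^p\big)\le c_p(t-s)^\gamma\big(H+Kn^{\gamma-\alpha p}\big)$ by a three-case analysis on whether $s,t$ fall in the same, adjacent, or separated subintervals of length $1/n$, and then invokes the Barlow--Yor version of the Garsia--Rodemich--Rumsey lemma to bound the H\"older seminorm, which it scales back to the discrete maximum. You instead run the dyadic chaining directly in the discrete setting. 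The telescoping bound $|\QQ_i-\QQ_0|\le\sum_{k=0}^N D_k$, the union bound $\e(D_k^p)\le 2^k\big(H2^{(N-k)\alpha p}+K2^{(N-k)b}\big)$, and the $L^p$-Minkowski summation are all sound; the reduction to a one-sided maximum (costing a factor $2^p$) and to $n$ a power of two (costing a doubling constant, since $n^{\alpha p}$ and $n^\gamma$ are doubling) are standard. Both routes produce a constant depending only on $\alpha,\gamma,p,b$; yours avoids any external chaining lemma, while the paper's avoids redoing the geometric summation at the price of the interpolation bookkeeping.

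One small but real bookkeeping slip: in the analysis of the $K$-series $K^{1/p}2^{Nb/p}\sum_{k=0}^{N}2^{k(1-b)/p}$, you have the two regimes swapped. When $b>1$ the exponent $(1-b)/p$ is negative, so the sum is $O(1)$ (dominated by the $k=0$ term, not the last term), giving $\lesssim K^{1/p}n^{b/p}$, and one uses $\gamma>b$. When $b<1$ the exponent is positive, so the sum is comparable to its \emph{last} term $2^{N(1-b)/p}$, giving $\lesssim K^{1/p}n^{1/p}$, and one uses $\gamma>1$. When $b=1$ there is an extra factor $N+1=O(\log n)$, absorbed because $\gamma>1$. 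In every case the resulting exponent is $\max(1,b)/p<\gamma/p$, which is exactly the summary you state at the end, so the two mislabeled sentences do not affect the final conclusion and the proof stands.
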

 
 When $K=0$, the above lemma is the discrete-time version of the Garsia-Rodemich-Rumsey lemma. We shall apply Lemma \ref{l:increment} to estimate the increments of $V_+$ and  the term $K(j-i)^b$ will appear when we consider a truncated version of $V_+$, with $K$ depending on $n$. 
 
\begin{proof} Consider $(\QQ^{(n)}_t)_{0\le t \le 1}$ the linear interpolation of the process $(n^{-\alpha} \QQ_{\lfloor n t \rfloor})_{0\le t \le 1}$.  We are going to apply the Garsia-Rodemich-Rumsey lemma for  $\QQ^{(n)}$. We claim that  there exists some $c_p>0$ such that for all $0 \le s < t\le 1$,  \begin{equation}    \e \Big( |\QQ^{(n)}_t- \QQ^{(n)}_s|^p\Big) \le \, c_p  (t-s)^\gamma \big( H+ K n^{\gamma-\alpha p}\big).  \label{Thetant-s}\end{equation}

In fact, if for some $0\le i \le n$, $\frac{i}{n} \le s < t \le \frac{i+1}{n}$, then $\QQ^{(n)}_t-\QQ^{(n)}_s=  n^{1-\alpha}(t-s) (\QQ_{i+1}-\QQ_i)$, hence $$ \e \Big( |\QQ^{(n)}_t- \QQ^{(n)}_s|^p\Big) \le n^{p(1-\alpha)}(t-s)^p  (H+K)\le (H+K) \, n^{\gamma-\alpha p}\, (t-s)^\gamma ,  $$

\noindent proving \eqref{Thetant-s} (with $c_p\geq 1$) in this case as $\gamma< \alpha p$. 
If for some $0\le i < j< n$, $\frac{i}{n} \le s <\frac{i+1}{n}$ and
$\frac{j}{n} \le t <\frac{j+1}{n}$, we  distinguish two subcases: either
$j=i+1$, then we use twice the above estimate to the couples
$(s,\frac{i+1}{n})$ and $(\frac{i+1}{n}, t)$ and get that $$ \e \Big( |\QQ^{(n)}_t- \QQ^{(n)}_s|^p\Big) 
\le 2^p (H+K) \, n^{\gamma-\alpha p}\, (t-s)^\gamma;$$

\noindent or $j\ge i+2$, then $t-s\ge \frac1{n}$ and 
   \begin{eqnarray*}    && \e \Big( |\QQ^{(n)}_t- \QQ^{(n)}_s|^p\Big) 
   \\
   &\le&
    3^{p-1} \e \Big( |\QQ^{(n)}_t- \QQ^{(n)}_{j/n}|^p\Big) +  3^{p-1} \e \Big( |\QQ^{(n)}_{i/n}- \QQ^{(n)}_{s}|^p\Big) + 3^{p-1} \e \Big( |\QQ^{(n)}_{j/n}- \QQ^{(n)}_{i/n}|^p\Big)
   \\
&\le&  2\, 3^{p-1}  n^{-\alpha p }  (H+K) + 3^{p-1} n^{-\alpha p} \e(|\QQ_j- \QQ_i|^p)
\\
&\le&   c_p  H (t-s)^{\alpha p} + c_p K (t-s)^b n^{b- \alpha p}
\\
&\le&
c_p  (t-s)^\gamma \big( H+ K n^{\gamma-\alpha p}\big), 
\end{eqnarray*}
 
\noindent where in the last inequality we have used the facts that $b< \gamma< \alpha p$ and $(t-s)^{b-\gamma} \le n^{\gamma-b}$ in  this case. Then the proof of \eqref{Thetant-s} is complete. 

Now in view of \eqref{Thetant-s},  we are entitled to apply   \cite[(3a.3)]{Barlow-Yor} and get that for any (arbitrarily) fixed $0 < \delta < \gamma-1$, $$ \e\left(\sup_{0\le s < t \le 1} \frac{|\QQ^{(n)}_t-\QQ^{(n)}_s|^p}{|t-s|^\delta} \right) \le C \, \big( H+ K n^{\gamma-\alpha p}\big).$$

\noindent Hence, we get  $$\e\left(\max_{0\le i < j \le n}  |\QQ_j-\QQ_i|^p  \right)= n^{\alpha p}  \e\left(\sup_{0\le s < t \le 1}  |\QQ^{(n)}_t-\QQ^{(n)}_s|^p  \right) \le C \, \big( H n^{\alpha p} + K n^{\gamma}\big),$$

\noindent proving \eqref{GRR-2}. 
\end{proof}

We then apply this lemma to get a control on the increments of $V_+$. 

\begin{lemma}\label{l:max_diam2-V+}
Assume \eqref{hyp-tree} and \eqref{hyp-brw} with some $q>4$. For any  $p\ge \frac{4}{\frac14- \frac{1}{q}}$,  there exists  some $ C>0$ such that for any  $\eta \in (0, 1)$ and $r>0$,  we have   \footnote{The term  $ k\eta  n$ is understood as its integer part. Similar remark applies elsewhere without further explanations.} 
\begin{equation}\label{eq:max_diam2-V+} 
\limsup_{n\to\infty} \P_0\Big( 
\max_{0\le k \le  \frac1\eta} \max_{0\le i \le \eta  n} | V_+(i+ k  
 \eta  n)- V_+(k \eta  n )| \ge  r\,  n^{1/4} \Big) 
\le C    \, r^{-p} \, \eta^{\frac{p}{4}-1}.
 \end{equation}
 Consequently, \begin{equation}\label{eq:max_diam2-Y++} 
\limsup_{n\to\infty} \P_0\Big( 
\max_{0\le k \le  \frac1\eta} \max_{0\le i \le \eta  n} | V_\Y(i+ k  
 \eta  n)- V_\Y(k \eta  n )| \ge  r\,  n^{1/4} \Big) 
\le \frac{C}{\mu(0)}    \, r^{-p} \, \eta^{\frac{p}{4}-1}.
 \end{equation}
 \end{lemma}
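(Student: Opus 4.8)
The plan is to derive \eqref{eq:max_diam2-V+} from a $p$-th moment estimate combined with a union bound over the at most $2/\eta$ values of $k$, and then to read off \eqref{eq:max_diam2-Y++} from \eqref{eq:t0_trick}. Since $V_+(m)=V_\infty(m)$ for $m\ge 0$, the translation invariance \eqref{eq:translational_invariance} shows that for each $k$ the process $(V_+(i+k\eta n)-V_+(k\eta n))_{0\le i\le \lfloor\eta n\rfloor}$ has the same law as $(V_+(i)-V_+(0))_{0\le i\le N}$ with $N:=\lfloor\eta n\rfloor$. Hence it suffices to prove $\limsup_{n\to\infty}\P_0(\max_{0\le i\le N}|V_+(i)-V_+(0)|\ge r n^{1/4})\le C r^{-p}\eta^{p/4}$ with $C$ depending only on $p,d,\mu,\theta$, and then multiply by $2/\eta$.

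I would then truncate: fix $\Lambda_n:=n^{\beta}$ with $\tfrac1q<\beta<\tfrac14$ (the precise value is pinned down at the end) and let $V_+^{(\Lambda_n)}$ be the branching random walk on the same tree $\T_\infty$ with displacements $X_e\ind_{\{|X_e|\le \Lambda_n\}}$. The geometric observation is that $\{V_+(i)-V_+(0):0\le i\le N\}$ depends only on the $X_e$ for $e$ in the edge set $\mathcal E_N$ formed by the union of the ancestral paths in $\T_\infty$ of $\T_+(0),\dots,\T_+(N)$; since in a depth-first ordering a vertex comes after its ancestors, $\mathcal E_N$ is exactly the set of parent-edges of $\T_+(1),\dots,\T_+(N)$ together with the spine edges $(\varnothing_j,\varnothing_{j-1})$ for $1\le j\le m(N)$, where $\T^\text{d}_{m(N)}$ is the subtree containing $\T_+(N)$; in particular $\#\mathcal E_N=N+m(N)$. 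A Chernoff estimate in the spirit of Lemma \ref{lem:easy_Z} (using $\mu(0)\in(0,1)$) gives $\P(m(N)\ge C_1 N)\le e^{-c_1 N}$, so, the $X_e$ being independent of $\T_\infty$,
\[
\P_0(\exists\, 0\le i\le N:\, V_+(i)-V_+(0)\ne V_+^{(\Lambda_n)}(i)-V_+^{(\Lambda_n)}(0))\le \P(m(N)\ge C_1 N)+ c\,(1+C_1)N\,\Lambda_n^{-q},
\]
which tends to $0$ as $n\to\infty$ (for fixed $\eta>0$) because $\beta>\tfrac1q$.

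It remains to bound $\P_0(\max_{0\le i\le N}|V_+^{(\Lambda_n)}(i)-V_+^{(\Lambda_n)}(0)|\ge r n^{1/4})$ by Markov's inequality and Lemma \ref{l:increment}. For $0\le i\le j\le N$, conditionally on $\T_\infty$ the increment $V_+^{(\Lambda_n)}(j)-V_+^{(\Lambda_n)}(i)$ is a sum of $D:={\rm d_{gr}}(\T_+(i),\T_+(j))$ i.i.d.\ copies of $Z:=X\ind_{\{|X|\le\Lambda_n\}}$, $X\sim\theta$ — here the symmetry of $\theta$ from \eqref{hyp-brw} is used, so that the signs along the geodesic are irrelevant and $\E Z=0$. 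Rosenthal's inequality (valid as $p>2$), with $\E|Z|^2\le\E|X|^2<\infty$, $\E|Z|^p\le C(1+\Lambda_n^{(p-q)_+})$, and $\E D^{p/2}\le C(j-i)^{p/4}$, $\E D\le C(j-i)^{1/2}$ from Lemma \ref{lem:3.7}(iii), yields
\[
\E_0(|V_+^{(\Lambda_n)}(j)-V_+^{(\Lambda_n)}(i)|^p)\le C(j-i)^{p/4}+C(1+\Lambda_n^{(p-q)_+})(j-i)^{1/2}.
\]
This is \eqref{GRR-1} with $\alpha=\tfrac14$, $b=\tfrac12$, $H=C$, $K=K_n:=C(1+\Lambda_n^{(p-q)_+})$, so Lemma \ref{l:increment} (applied coordinatewise, with $p>4$ and $\gamma\in(1,p/4)$ close to $1$) gives $\E_0(\max_{0\le i\le N}|V_+^{(\Lambda_n)}(i)-V_+^{(\Lambda_n)}(0)|^p)\le C'(N^{p/4}+K_n N^{\gamma})$. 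Dividing by $r^p n^{p/4}$ and using $N=\lfloor\eta n\rfloor$, the first term is $\le C'r^{-p}\eta^{p/4}$ and the second is $\le C'r^{-p}(1+n^{\beta(p-q)_+})(\eta n)^{\gamma}n^{-p/4}$, which vanishes as $n\to\infty$ provided $\beta(p-q)_+ +\gamma<p/4$. The admissible $\beta$ fill the interval $(\tfrac1q,\tfrac{p/4-\gamma}{p-q})$, nonempty (for $\gamma$ near $1$) precisely because $q>4$, and this compatibility of exponents — which rests on the sharp count $\#\mathcal E_N=O(N)$ rather than the crude $\sum_{i\le N}\E\,{\rm d_{gr}}(\varnothing,\T_+(i))=O(N^{3/2})$ — is the one delicate point; everything else is routine. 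Combining the two displays proves \eqref{eq:max_diam2-V+}, and \eqref{eq:max_diam2-Y++} follows at once: the bad event there is of the form $\{F((V_\Y(i))_{i\ge0})=1\}$, so by \eqref{eq:t0_trick} its probability is at most $\mu(0)^{-1}\P_0(F((V_+(i))_{i\ge0})=1)$, i.e.\ $\mu(0)^{-1}$ times the bad event in \eqref{eq:max_diam2-V+}.
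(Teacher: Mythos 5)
Your proof is correct and follows essentially the same route as the paper's: translation invariance plus a union bound over the $\le 1/\eta+1$ windows, a truncation of the displacements at a polynomial level, Rosenthal's inequality applied conditionally on the graph distance (using Lemma \ref{lem:3.7}(iii)), the Garsia--Rodemich--Rumsey Lemma \ref{l:increment} with $\alpha=\tfrac14$ and $b=\tfrac12$, and finally \eqref{eq:t0_trick} for \eqref{eq:max_diam2-Y++}. The only differences are cosmetic: the paper fixes the truncation level at $b_n=n^{1/8+1/(2q)}$ and uses the crude bound $\E|X^{(n)}|^p\le b_n^p$, whereas you keep $\Lambda_n=n^{\beta}$ abstract and use the sharper moment bound $\E|Z|^p\le C(1+\Lambda_n^{(p-q)_+})$ to exhibit the admissible range of $\beta$; and the paper runs the truncation over the full block $[0,n]$ at once while you do it window by window (both give an $o_n(1)$ error after the union bound). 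One small caveat: at the boundary $p=q$ the bound $\E|Z|^p\le C(1+\Lambda_n^{(p-q)_+})$ acquires a logarithmic factor, but since all that is needed is $K_nN^\gamma n^{-p/4}\to 0$, this does not affect the conclusion.
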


 Typically    we    will    choose     $r=    \eta^\zeta$    for    some
 $\zeta\in   (0,  \frac14)$   and   $p$  large   enough   so  that   the
 right-hand-side of  \eqref{eq:max_diam2-V+} goes to $0$  as $\eta\to0$.
 Comparing  such a  choice  of $r$  in  Lemma \ref{l:max_diam2-V+}  with
 \eqref{increment-V+},     we     remark      that     the     condition
 $\zeta< \frac14  - \frac1{q}$ is relaxed to $\zeta<1/4$ 
 at   the    expense   of   the    limit   as   $n   \to    \infty$   in
 \eqref{eq:max_diam2-V+}. This relaxation is crucial for the forthcoming
 Lemma \ref{l:neighbor}, where we must select $\zeta$ sufficiently close
 to $\frac14$ (in fact we need  $\zeta > \frac1{d}$) to control the size
 of a neighborhood of $V_+$ or of $\widehat V_-$.

 \begin{proof}  It   is  enough  to  show   \eqref{eq:max_diam2-V+},  as
   \eqref{eq:max_diam2-Y++}  follows   from  \eqref{eq:max_diam2-V+}  by
   \eqref{eq:t0_trick}. To this end, we  first use a truncation argument
   (Step   1)  to   remove  the   big   jumps  in   $V_+$,  then   Lemma
   \ref{l:increment}  (Step  2)  to   estimate  the  increments  of  the
   truncated version of $V_+$.

{\bf Step 1:}
Set
\[
b_n:=n^{\frac 1 8+\frac 1 {2q}}.
\]
Recall that on a branching random walk, $X_e$ denotes the displacement on an edge $e$; we denote
\[
X^{(n)}_e:=X_e\,  1_{\{|X_e|<b_n\}},
\]
and we write $V_{+,n}$ for the corresponding branching random walk where displacements on edges are $(X^{(n)}_e)$ instead of $(X_e)$. Our first step is to show that we may replace $\tbrw{+}{}[0,n]$ by $\tbrw{+,n}{}[0,n]$, i.e.  
\begin{align}\label{eq:no_large_jump}
\limsup_{n\rightarrow\infty}\P\pars*{\tbrw{+,n}{}[0,n]=\tbrw+{}[0,n]}=1.
\end{align}

 Denote by ${\tt t}_n$ the subtree of $\T_\infty$ spanned by $\T_+[0,n]$ and 
  $\Delta_n:=\#({\tt t}_n\cap \X)$ the number of intersections of ${\tt t}_n$ with the spine $\X$.  Note that  the subtree of $\T_\infty$ spanned by $\T_+\backslash \T_0^\text{d}$ and the root $\varnothing$, in depth-first order,  has the same distribution as $\widehat \T_-$.  Since  $\T_+\backslash \T_0^\text{d}$ is independent of $\T_0^\text{d}$, we get that for any $1\le j \le n+1$ and $k\ge 1$, $\p(\Delta_n =  k | \#\T_0^\text{d}=j) = \p(\#(\widehat \T_-[0, n+k-j] \cap \X) = k)\le \p(\#(\widehat \T_-[0, n+k] \cap \X) \ge  k).$ It follows that $\p(\Delta_n =  k)= \sum_{j=1}^{n+1} \p(\Delta_n =  k , \,  \#\T_0^\text{d}=j) \le  \p(\#\widehat \T_-[0, n+k] \cap \X) \ge  k)$, hence  \begin{eqnarray*}    \p(\Delta_n\ge n) 
  &\le& \sum_{k=n}^\infty \p(\#(\widehat \T_-[0, n+k] \cap \X) \ge  k) 
  \\
  &\le& \sum_{k=n}^\infty \p\Big(\#(\widehat \T_-[0, n+k] \cap \X) \ge  \frac{n+k}{2}\Big) 
  \\
  &\le& \sum_{k=n}^\infty e^{-c (n+k) } \le C \, e^{-c \, n},
 \end{eqnarray*}

\noindent where the third inequality is due to Lemma \ref{lem:easy_Z}
(with $r=1/2$ there). Note that there are $n+\Delta_n$ edges in ${\tt
  t}_n$. By union bounds and \eqref{hyp-brw}, we get
$$
 \P\Big(\max_{e\in {\tt t}_n}|X_e|\ge b_n\Big)
\le  2n\P(|X|\ge b_n)+ C \, e^{-c n } \le C  n b_n^{-q}+  C \, e^{-c n }
\le
C'\, n^{\frac 1 2-\frac q 8}
$$
 
\noindent for all large $n$, where $X$ denotes a random variable with distribution $\theta$. 
In other words, with probability larger than $1-C'\, n^{\frac 1 2-\frac q 8}$, there is no edge $e\in {\tt t}_n$ such that $X_e\ne X^{(n)}_e$, hence $\tbrw{+,n}{}[0,n]=\tbrw+{}[0,n]$ and we get   \eqref{eq:no_large_jump}. 
Then  it suffices to prove \eqref{eq:max_diam2-V+}  for $V_{+,n}[0,n]$ instead of $\tbrw+{}[0,n]$.

 \medskip
{\bf Step 2}: 
  For $n\ge j>i\ge 0$, 
recall that ${\rm d_{gr}}(\T_+(i), \T_+(j))$ denotes  the graph distance in $\T_\infty$ between the two vertices $\T_+(i)$ and $\T_+(j)$.    Conditioning on $\{{\rm d_{gr}}(\T_+(i), \T_+(j))=k\}$,  $V_{+,n}(j)- V_{+, n}(i)$ is the sum of $k$ iid copies of $X^{(n)}:= X 1_{\{|X|\le b_n\}}$, where $X$ is distributed as $\theta$. As $X^{(n)}$ is centered (by the symmetry of $\theta$), we use the Rosenthal inequality  \cite[Theorem 2.9]{Petrov}, and get that for any $p>1$, there exists some $c_p>0$ such that $$\E\Big( |V_{+,n}(j)- V_{+, n}(i)|^p \, \big|\,  {\rm d_{gr}}(\T_+(i), \T_+(j))=k\Big) \le c_p \big( k \E(|X^{(n)}|^p ) +   k^{p/2} \E(|X^{(n)}|^2)^{p/2}\big).$$

\noindent Note that $ \E(|X^{(n)}|^p ) \le b_n^p $ and $ \E(|X^{(n)}|^2) \le \E(|X|^2)<\infty$.  Then by \eqref{dTiTj}, there is some positive constant $C$ such that for any $0\le i < j\le n$ and $p>4$,   $$
\E\Big( |V_{+,n}(j)- V_{+, n}(i)|^p  \Big)
\le 
C\, \big( b_n^p (j-i)^{1/2} + (j-i)^{p/4}\big).$$

\noindent    Applying \eqref{GRR-3} to $\Upsilon_\cdot=V_{+, n}(i+\cdot)$, $\alpha=\frac14$, $b=\frac12$ and   any  $\gamma \in (1, p/4)$, we get some positive constant $C'$ such that for any $0\le i < \ell\le n$, \begin{equation}   
\E\Big( \max_{i\le j \le \ell} |V_{+,n}(j)- V_{+, n}(i)|^p  \Big)
\le 
C'\, \big( (\ell-i)^{p/4} + b_n^p (\ell-i)^\gamma\big). \label{maxV+n}
 \end{equation}

This, in view of the union bound  and the  Markov inequality,  yields that \begin{eqnarray*} && \P_0\Big( 
\max_{0\le k \le  \frac1\eta} \max_{0\le i \le \eta  n} | V_{+, n}(i+ k  
 \eta  n)- V_{+, n}(k \eta  n )| \ge r\,  n^{1/4} \Big) 
 \\
 &\le&
C'\, \eta^{-1} \,  r^{-p} \,  n^{-p/4} \big( \eta^{p/4} n^{p/4}+ b_n^p \eta^{\gamma} n^\gamma\big) 
 \\
 &=& C' \, r^{-p} \, \eta^{p/4 - 1} + C'\, r^{-p}\, \eta^{ \gamma-1 } \,  b_n^p n^{-p/4 +\gamma}. \end{eqnarray*}

\noindent Now for any $p\ge  \frac{4}{\frac14- \frac{1}{q}}$, 
 we fix an arbitrary $\gamma\in (1, 2)$ so that $(\frac18+ \frac1{2q})p - \frac{p}{4} + \gamma <0$, 
then $b_n^p n^{-p/4 +\gamma}\to 0$ as $n\to \infty$, and we get \eqref{eq:max_diam2-V+} for $V_{+, n}$, hence for $V_+$ by Step 1. 
 \end{proof}

The following lemma says that with high probability the number of points in the neighbor of $\htbrw-{}[0,m]$ is not too big, such an estimate  will be important in estimating the forthcoming probability term $p'_{m, n}$ defined in \eqref{p'mn}.

\begin{lemma}\label{l:neighbor} Let $d\ge 5$. Assume  \eqref{hyp-tree} and \eqref{hyp-brw}  with $q> 4$.  For any $M>0$ and $\gamma\in (0, d-4)$, there exist  some positive constants  $ C=C_d$ and $C'= C_{M, \gamma}$ such that for any   $\eta \in (0, 1)$, \begin{equation}\label{eq:max_diam5}
 \limsup_{m\to\infty}  \P_0\Big( 
\#\Big( \big(\htbrw-{}[0,m]\big)^{\eta m^{1/4}}\cap \z^d\Big)
>
  C\,  \eta^\gamma m^{d/4}\Big) 
\le   C'\,   \eta^{M}  .
\end{equation}
\end{lemma}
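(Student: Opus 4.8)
The plan is to cover the $\eta m^{1/4}$-neighborhood of $\htbrw-{}[0,m]$ by balls of a fixed radius (of order $\eta m^{1/4}$) centered at a sparse subsequence of the walk $\htbrw-{}$, and then bound the number of lattice points in the neighborhood by (number of such balls) $\times$ (volume of each ball). More precisely, I would first split the index range $[0,m]$ into $1/\eta^4$ blocks of length $\eta^4 m$ (integer parts), and write $\htbrw-{}=\brwrange_\X\cup\brwrange_\Y$ as in \eqref{def-X}--\eqref{def-Y}. Using \eqref{eq:t0_trick} to pass from $\brwrange_\Y$ to $\brwrange_+$ at the cost of a factor $1/\mu(0)$, it suffices to control separately the neighborhood of the spine walk $\brwrange_\X[0,m]$ and that of $\tbrw+{}[0,m]$ (here I am sloppy about the time change between $\Y$ and $+$; the depth-first index of $\Y$ dominates the $+$-index up to the same block decomposition, so this is harmless after enlarging constants). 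The spine part is a genuine $\theta$-random walk in $\z^d$ with $d=5$, whose range has $O(m)$ points and whose $\eta m^{1/4}$-neighborhood therefore contains $O(m\cdot(\eta m^{1/4})^d)=O(\eta^d m^{1+d/4})$ points — but this is $\ll \eta^\gamma m^{d/4}$ only because $d/4>1$ for $d=5$, so the spine contributes negligibly; one needs $m\,(\eta m^{1/4})^d\le \eta^\gamma m^{d/4}$, i.e. $\eta^{d-\gamma}\le m^{-1}$, which fails for fixed $\eta$, so in fact the spine must be handled by the same GRR-type covering as $V_+$, not by the trivial volume bound. Let me correct: for both $\brwrange_\X$ and $\tbrw+{}$ I will use the increment bound of Lemma \ref{l:max_diam2-V+} (and its analogue for $\brwrange_\X$, which is even easier being an ordinary random walk, via \eqref{max-Vc}-type estimates or directly \eqref{increment-V+}).

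The key covering step: on the event that $\max_{0\le k\le 1/\eta^4}\max_{0\le i\le\eta^4 m}|V_+(i+k\eta^4 m)-V_+(k\eta^4 m)|\le \eta m^{1/4}$ (apply Lemma \ref{l:max_diam2-V+} with its $\eta$ taken to be $\eta^4$ and its $r$ taken to be $\eta m^{1/4}/( \eta^4 m)^{1/4}=\eta^{0}$... — more carefully, with block length $\ell=\eta^{4}m$ one has $(\ell)^{1/4}=\eta m^{1/4}$, so the displacement within a block of length $\ell$ is $\le r(\ell)^{1/4}=r\eta m^{1/4}$, and choosing $r$ a small constant gives a block-oscillation bound $\le \eta m^{1/4}$ with probability $\ge 1-C r^{-p}(\eta^4)^{p/4-1}=1-Cr^{-p}\eta^{p-4}$, which is $\le C'\eta^M$ once $p$ is large), the whole range $\tbrw+{}[0,m]$ is covered by the $N:=1/\eta^4$ balls $\ball(V_+(k\eta^4 m),2\eta m^{1/4})$, $k=0,\dots,N$. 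Hence its $\eta m^{1/4}$-neighborhood is covered by $N+1$ balls of radius $3\eta m^{1/4}$, containing at most $(N+1)\cdot C_d(\eta m^{1/4})^d=C_d\,\eta^{-4}\,\eta^d m^{d/4}=C_d\,\eta^{d-4}m^{d/4}$ lattice points. Since $d=5$ this is $C\eta m^{d/4}$, matching the target exponent $\gamma<d-4=1$ after shrinking $\eta$ (or, for general $d$, giving $\eta^{d-4}\le\eta^\gamma$). Doing the same for $\brwrange_\X$ (ordinary $\theta$-walk, so the block-oscillation estimate follows from \eqref{max-rw} combined with a union bound over $N=\eta^{-4}$ blocks, or again from a GRR argument with exponent $p$ large using the finite-$q$-moment hypothesis), and combining via $\htbrw-{}[0,m]=\brwrange_\X[0,m]\cup\brwrange_\Y[0,m']$ with $m'$ of the same order as $m$, we get \eqref{eq:max_diam5} with $C=C_d$ and $C'=C_{M,\gamma}$, after choosing $p=p(M)$ large enough that $Cr^{-p}\eta^{p-4}\le C'\eta^M$ and absorbing the constant $r$.

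The main obstacle I anticipate is bookkeeping the time change between the depth-first index in $\widehat\T_-$ (equivalently in $\Y$) and the index of $\tbrw+{}$ used in Lemma \ref{l:max_diam2-V+}: the identity $\brwrange_\Y\overset{(\text{law})}{=}(\brwrange_+\mid \T_0^{\rm d}=\{\varnothing\})$ from \eqref{eq:t0_trick} identifies the \emph{ranges} as sets but I must make sure the block decomposition $[0,m]=\bigcup_k[k\eta^4 m,(k+1)\eta^4 m]$ transfers correctly — a clean way is to note that $\#\httree-[0,m]=m+\Delta$ with $\Delta=\#(\httree-[0,m]\cap\X)\le m$ deterministically (and $\le m/2$ outside an exponentially small event by Lemma \ref{lem:easy_Z}), so that the $+$-index stays below $2m$, and then apply Lemma \ref{l:max_diam2-V+} with $m$ replaced by $2m$ and block length $\eta^4(2m)$; the extra factor $2$ only changes constants. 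A secondary point is that the neighborhood must be taken in $\z^d$ and the balls in $\r^d$, but $\#(\ball(x,\rho)\cap\z^d)\le C_d\rho^d$ for $\rho\ge 1$, and $\eta m^{1/4}\ge 1$ for $m$ large (with $\eta$ fixed), which is exactly why the statement is phrased with $\limsup_{m\to\infty}$. Everything else is a routine union bound and an application of Lemma \ref{l:max_diam2-V+} and \eqref{eq:t0_trick}.
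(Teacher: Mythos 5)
Your covering strategy is the right one, but there is a quantitative error in the choice of block length, and it is exactly the error that the hypothesis $\gamma\in(0,d-4)$ is there to avoid. You take $N=\eta^{-4}$ blocks of length $\eta^4 m$ and claim that Lemma \ref{l:max_diam2-V+} gives per-block oscillation $\le r\,(\eta^4 m)^{1/4}=r\eta m^{1/4}$, with failure probability $C r^{-p}(\eta^4)^{p/4-1}=Cr^{-p}\eta^{p-4}$ for a fixed small constant $r$. This misreads the lemma: in \eqref{eq:max_diam2-V+} the oscillation threshold is $r\,n^{1/4}$ with $n$ the \emph{total} range length, not $r\,(\eta n)^{1/4}$. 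To obtain per-block oscillation $\le\eta m^{1/4}$ you must take the lemma's $r$ equal to $\eta$, and then the right-hand side becomes $C\,\eta^{-p}\,(\eta^4)^{p/4-1}=C\,\eta^{-4}$, which does not decay as $\eta\to 0$. In other words, with block length $\eta^4 m$ the target oscillation $\eta m^{1/4}$ is exactly the diffusive scale $(\eta^4 m)^{1/4}$ of the block, so the per-block probability has no smallness to give. The paper's fix is to take $N=\eta^{-1/\zeta}$ blocks with $\zeta=\frac1{d-\gamma}\in(\frac1d,\frac14)$, i.e.\ block length $\eta^{d-\gamma}m$ with $d-\gamma>4$: the target $\eta m^{1/4}$ then exceeds the block scale $\eta^{(d-\gamma)/4}m^{1/4}$ by a positive power of $1/\eta$, and \eqref{eq:max_diam2-Y++} applied with $\eta\mapsto\eta^{1/\zeta}$, $r\mapsto\eta$ gives failure probability $C\eta^{-p+(d-\gamma)(p/4-1)}\le C\eta^M$ once $p$ is large. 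The lattice-point count then gives $\eta^{-(d-\gamma)}\cdot C_d\,\eta^d m^{d/4}=C_d\,\eta^{\gamma}m^{d/4}$, matching the statement; your count $\eta^{d-4}m^{d/4}$ was formally stronger, but it rests on an oscillation bound that fails.

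The second, smaller, discrepancy concerns the spine. You propose to cover $\brwrange_\X$ by a separate block argument, invoking \eqref{max-rw}. That route is awkward: \eqref{max-rw} is only a second-moment bound and gives per-block failure probability $O(r^{-2})$, too weak to beat the union bound over $\eta^{-2}$ blocks; moreover the number $\Delta_m=\#(\httree-[0,m]\cap\X)$ of spine points seen up to depth-first index $m$ is random, and you would need a sharp upper bound on it with polynomially high probability (Lemma \ref{lem:easy_Z} only gives $\Delta_m\le rm$). The paper sidesteps all this by showing in \eqref{distanceXY} that each spine point is within graph distance $O(\log m)$ of a $\Y$-point (geometric attachment, each spine vertex carries a non-trivial left subtree with probability $1-\mu(0)$), hence within spatial distance $o(\eta m^{1/4})$ of $\brwrange_\Y$; the $\Y$-covering therefore covers $\X$ for free after enlarging the radius by a constant factor. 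This proximity-to-$\Y$ argument is both cleaner and what makes the proof close without any separate estimate on $\Delta_m$.
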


\begin{proof}  Let $\zeta := \frac1{d-\gamma}\in (\frac1{d}, \frac14)$.  Applying
  \eqref{eq:max_diam2-Y++} (with $\eta$ replaced by $\eta^{1/\zeta}$ and
  $r$ by $\eta$) to a sufficiently large $p$, we get that for any $M>1$, there is some positive constant $C$  such that for all  $\eta \in (0, 1)$, 
\begin{equation}\label{eq:max_diam2}
\limsup_{m\to\infty} \P_0\Big( 
\max_{0\le k \le  \eta^{-1/\zeta}} \max_{0\le j \le \eta^{1/\zeta} m} | V_\Y(j+ k  
 \eta^{1/\zeta} m)- V_\Y(k \eta^{1/\zeta} m )| \ge \eta  m^{1/4} \Big) 
\le C    \, \eta^{M}.
\end{equation}

\noindent Recall  the notations  $\brwrange_{\X, \widehat V_-}^{(m)}=\brwrange_\X\cap\htbrw-{}[0,m]$, $\brwrange_{\Y, \widehat V_-}^{(m)}=\brwrange_\Y\cap\htbrw-{}[0,m]$ and  $V_\Y[i, j]:= \{V_\Y(i), V_\Y(i+1), ..., V_\Y(j)\}$.  Since $\brwrange_{\Y, \widehat V_-}^{(m)} \subset V_\Y[0,m]$,  we deduce from \eqref{eq:max_diam2} that
\begin{align}
1-\P_0\pars*{\brwrange_{\Y, \widehat V_-}^{(m)}\text{ can be covered by }\eta^{-{1/\zeta}}\text{ balls of radius }\eta m^{1/4}}
\le C \, \eta^{M} +o_m(1), \label{eq:max_diam2bis}
\end{align}

\noindent where $o_m(1) \to 0$ as $m\to\infty$ and $o_m(1)$ may depend on $\eta$.

Let $j\ge 1$. Consider the event $\{\dd(x, \brwrange_{\Y, \widehat V_-}^{(m)})\ge j\}$ with $x \in \brwrange_{\X, \widehat V_-}^{(m)}$. There is some $i$ such that $x=V_\infty(\varnothing_i)$. 
Let 
$K_i:=\dd_{gr}(\varnothing_i,\Y\cap\widehat {\mathcal T}_-[0,m])$, where
we recall that $\dd_{gr}$ is the graph distance in $\T_\infty $. Then $\dd(x, \brwrange_{\Y, \widehat V_-}^{(m)})$ is less than the displacement of a $\theta$-walk at time $K_i$.
Since each spine point has no attached tree with probability $\widetilde \mu(0)=1-\mu(0)$, we deduce from the union bound that for any $j, \ell\ge 1$,\begin{eqnarray} \P_0\Big( \max_{x\in \brwrange_{\X, \widehat V_-}^{(m)}}\dd(x, \brwrange_{\Y, \widehat V_-}^{(m)})\ge j\Big)
&\le&
\P_0\Big(\max_{0\le i \le m-1} K_i \ge \ell \Big) +  m \P_0(|S_\ell| \ge j)
\nonumber
\\
&\le&
m (1-\mu(0))^\ell +   m j^{-q'} \E_0(|S_\ell|^{q'}),
    \end{eqnarray}
\noindent where we choose (and then fix) an arbitrary constant $q'\in (4, q)$. 
By \cite[Theorem 2.10]{Petrov},  $$ \E_0(|S_{\ell}|^{q'}) \le C\, \ell^{q'/2}.
$$

\noindent 
 Take $\ell=\lfloor \frac{q'}{\mu(0)} \log m \rfloor$  and $j= \eta m^{1/4}$. There exists some $m_0=m_0(q',   \mu(0))\ge 1$ such that for all $m\ge m_0$,    \begin{equation} \P_0\Big( \max_{x\in \brwrange_{\X, \widehat V_-}^{(m)}}\dd(x, \brwrange_{\Y, \widehat V_-}^{(m)})\ge \eta m^{1/4}  \Big) 
 \le     C'  \eta^{-q'} \, m^{1- q'/4} (\log m)^{q'/2}.
   \label{distanceXY}\end{equation}

 \noindent   
 Note that on $\{\max_{x\in \brwrange_{\X, \widehat V_-}^{(m)}}\dd(x, \brwrange_{\Y, \widehat V_-}^{(m)})< \eta m^{1/4} \}$, if $\brwrange_{\Y, \widehat V_-}^{(m)}$ can be covered by $\eta^{-{1/\zeta}}$  balls of radius $\eta m^{1/4}$, then $\htbrw-{}[0,m]=\brwrange_{\X, \widehat V_-}^{(m)}\cup \brwrange_{\Y, \widehat V_-}^{(m)}$  can be covered by $\eta^{-{1/\zeta}}$  balls of radius $2\eta m^{1/4}$. 
  It follows from \eqref{eq:max_diam2bis} and \eqref{distanceXY} that 
\begin{eqnarray*}   && 1-\P_0\pars*{
\htbrw-{}[0,m]\text{ can be covered by }\eta^{-{1/\zeta}}\text{ balls of radius }2\eta m^{1/4}}
\\
&\le& C\,    \eta^{M} +   C'  \eta^{-q'} \, m^{1- q'/4} (\log m)^{q'/2}+ o_m(1)
\\
&=& C\,    \eta^{M}  + o_m(1).
\end{eqnarray*}

Finally, when $\htbrw-{}[0,m]$ is covered by $\eta^{-{1/\zeta}}\text{ balls of radius }2\eta m^{1/4}$, its $(\eta m^{1/4})$-neighborhood is covered by $\eta^{-{1/\zeta}}\text{ balls of radius }3\eta m^{1/4}$,
hence
\[\#\Big( \big(\htbrw-{}[0,m]\big)^{\eta m^{1/4}}\cap \z^d\Big)\le \eta^{-{1/\zeta}} \,  c_d\,\eta^d \, m^{d/4}= c_d\, \eta^\gamma\, m^{d/4},
\]

\noindent for some positive constant $c_d$ only depending on $d$. This proves \eqref{eq:max_diam5}.
\end{proof}

\subsection{Intersection probabilities between \texorpdfstring{$\tbrw\pm{}$}{} and \texorpdfstring{$\xi$}{}}
\label{sec:3.2}

The proof of Proposition \ref{p:intersection} can be outlined as follows. By ordering the vertices of $\T_c$  using the depth first search and its reversed sense  (see \eqref{eq:j0}), we may reduce the problem of studying $I(\varepsilon, n)$, defined in \eqref{def-Iepsilon}, to that of  the non-intersection probability $$\mathbf P_0\otimes \rwp_{x_n} 
\pars*{ \tbrw c{}\bracks*{0,\frac 3 5m} \cap (\xi[0,n])^{\varepsilon n^{1/2}} \not=\emptyset, \tbrw c{}\bracks*{0,{\frac 4 5}m} \cap \xi[0,n] =\emptyset \, \Big|\, \#\ttree c=m} ,
$$

\noindent where the choice $(\frac35, \frac45)$ can be replaced by any $(c, c+\lambda)$ with $c \in (\frac12, 1)$ and $\lambda\in (0, 1-c)$. 
Furthermore,  the main contribution to $I(\varepsilon, n)$ comes from those $m$ of order $n^2$, say $m \in [ \delta n^2, \frac1{\delta} n^2]$ for small $\delta>0$. By \eqref{eq:absolutecontinuity2}, the above probability is dominated, up to a multiplicative constant,  by the corresponding probability for $\widehat V_-$ instead of $V_c$ conditioned on $\{\#\ttree c=m\}$. This latter probability is estimated  in the following result.

\begin{proposition} \label{l:p(m,n)} Let $d=5$.  Assume  \eqref{hyp-tree},  \eqref{hyp-brw}  with $q> 4$ and \eqref{hyp-xi}. Fix $\lambda>0$. 
For each $\varepsilon>0$, $x\in \r^5$  and $x_n=[x\sqrt n] \in \z^5 $, we define
\[ 
p_{m, n}  :=  \mathbf P_0\otimes \rwp_{x_n} \pars*{ \htbrw -{}[0, m] \cap (\xi[0,n])^{\varepsilon n^{1/2}} \not=\emptyset,  \htbrw -{}[0, (1+\lambda) m] \cap \xi[0,n] =\emptyset }.
\]
There is  some $c>0$ such that for  any $\delta\in (0, \frac19)$, there is   $C=C_{\delta, x, \lambda}>0$ such that  for any $ \varepsilon \in (0,1)$,  
\[
\limsup_{n\to\infty}\max_{m \in [ \delta n^2, \frac1{\delta} n^2]} p_{m, n} \le C  \,     \varepsilon^c .   
\]
\end{proposition}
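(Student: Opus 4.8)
The plan is the following. On the event defining $p_{m,n}$, the range of $\htbrw-{}$ first reaches the $\varepsilon\sqrt n$-neighbourhood of $\xi[0,n]$ at some depth-first time $\tau\le m$; from that instant there is still a budget of $\lambda m\asymp_\delta n^2$ further vertices, out of which we extract a fresh branching random walk with $\asymp n^2$ vertices started within $O(\varepsilon\sqrt n)$ of $\xi[0,n]$. By Lemma~\ref{lem:reversed_intersection} such a configuration meets $\xi[0,n]$ unless an event of probability $\lesssim\varepsilon^{c}$ occurs, which is the content of the proposition.

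\emph{Step 1 (good events).} Fix $\lambda>0$ and $\delta\in(0,\tfrac19)$ and let $m\in[\delta n^2,\delta^{-1}n^2]$, so $m^{1/4}\asymp_\delta n^{1/2}$. I would restrict to the event on which: $\xi[0,n]\subset\ball(x_n,\varepsilon^{-1}\sqrt n)$ (complement probability $\le C\varepsilon^{2}$ by~\eqref{max-rw}); $\htbrw-{}[0,(1+\lambda)m]\subset\ball(0,Rm^{1/4})$ for $R$ large, with polynomially small complement by~\eqref{max-Vc}, \eqref{eq:absolutecontinuity2}, \eqref{eq:t0_trick}; $\#(\httree-[0,(1+\lambda)m]\cap\X)\le\tfrac12(1+\lambda)m$, with exponentially small complement by Lemma~\ref{lem:easy_Z}; and $\#\big((\htbrw-{}[0,m])^{\varepsilon\sqrt n}\cap\z^5\big)\le C\varepsilon^{\gamma}m^{5/4}$ with $\gamma$ close to $1$, whose complement has probability $\le C\varepsilon^{M}$ by Lemma~\ref{l:neighbor} (applied with $\eta\asymp_\delta\varepsilon$). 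These cost at most $C\varepsilon^{c}+o_n(1)$, so it remains to bound $p_{m,n}$ on the good event.

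\emph{Step 2 (first entry and extraction).} On the good event and on the event $A_{m,n}$ inside $p_{m,n}$, let $\tau\le m$ be the first $k$ with $\widehat V_-(k)\in(\xi[0,n])^{\varepsilon\sqrt n}$, put $v:=\httree-(\tau)$, $y:=\widehat V_-(\tau)$, and fix $\xi_j\in\xi[0,n]$ with $|y-\xi_j|\le\varepsilon\sqrt n$. Since $\xi$ is independent of the branching random walk, $\tau$ is a stopping time for the filtration generated by $\xi$ and by the depth-first exploration of $\httree-$, and we condition on the stopped $\sigma$-field $\mathcal F_\tau$. If $v=\varnothing_i$ is a spine vertex, then $\mathrm{time}(\varnothing_i)=\tau\le m$, the $\httree-$-subtree rooted at $\varnothing_i$ equals $\httree-[\tau,\infty)$, and conditionally on $\mathcal F_\tau$ it is a fresh branching random walk started at $V_\X(i)=y$; by the rerooting invariance of $\T_\infty$ (\cite[Section~2.2]{bai2020capacity}) together with~\eqref{eq:absolutecontinuity1}--\eqref{eq:absolutecontinuity2} and~\eqref{eq:t0_trick}, its first $\lambda m$ vertices (all visited by time $(1+\lambda)m$, since $\tau\le m$) carry a range $V'$ stochastically comparable, up to a $\delta,\lambda$-dependent constant, to that of a critical branching random walk conditioned to have $\asymp n^2$ vertices. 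If instead $v\in\Y$, then by~\eqref{eq:t0_trick} this case is dominated, up to the factor $\mu(0)^{-1}$, by the analogous event for $\tbrw+{}$, where $v$ sits inside one of the critical subtrees composing $\tbrw+{}$; applying~\eqref{increment-V+}--\eqref{eq:max_diam2-V+} dyadically with an exponent $\zeta\in(\tfrac1d,\tfrac14)$ shows that, outside probability $\le C\varepsilon^{M}$, every depth-first block of $\le(2^{k}\varepsilon)^{1/\zeta}n^2$ consecutive vertices of $\tbrw+{}$ has diameter $\le 2^{k}\varepsilon\,n^{1/2}$ for all $0\le k\le\log_2(1/\varepsilon)$; this localizes within $\varepsilon\sqrt n$ of $y$ a suitable ancestor $w$ of $v$ (on the spine if necessary) whose subtree's first $\asymp n^2$ vertices are visited within the budget, and~\eqref{dTiTj} with~\eqref{eq:absolutecontinuity1}--\eqref{eq:absolutecontinuity2} again identifies the resulting $V'$ as a fresh $\asymp n^2$-vertex branching random walk range, comparable to a large critical branching random walk, started within $2\varepsilon\sqrt n$ of $\xi_j$ and independent of $\xi$ given $\mathcal F_\tau$. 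In all cases $V'\subset\htbrw-{}[0,(1+\lambda)m]$.

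\emph{Step 3 (non-intersection).} On $A_{m,n}$ one has $V'\cap\xi[0,n]=\emptyset$. Replacing $\xi[0,n]$ by $\xi[j,n]$ when $j\le n/2$ and by $\xi[0,j]$ (whose time-reversal is a random walk of the same type issued from $\xi_j$) when $j>n/2$, one obtains in either case a random walk of length $\ge n/2$ issued from a point of $\ball(\text{root of }V',\,2\varepsilon\sqrt n)$ that avoids $V'$; $n/2$ steps suffice to traverse the $\asymp\log(1/\varepsilon)$ dyadic scales, between $\varepsilon\sqrt n$ and $\sqrt n$, on which $V'$ is dense, which is precisely the regime treated in~\cite{bai2022convergence}. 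Translating the root of $V'$ to the origin and applying Lemma~\ref{lem:reversed_intersection} — with $n$ replaced by a fixed multiple, $\varepsilon$ by $C\varepsilon$, $M$ large, and in its Fubini form in which the random walk avoids the branching random walk — gives
\[
\mathbf P\big(V'\cap\xi[0,n]=\emptyset \mid \mathcal F_\tau\big)\le C\,\varepsilon^{\upsilon}
\]
outside an event of $\mathbf P$-probability $\le C\varepsilon^{M}$. Hence $\mathbf P(A_{m,n}\mid\mathcal F_\tau)\le C\varepsilon^{\upsilon}$ off that event; integrating over $\mathcal F_\tau$, summing the spine and non-spine cases, and combining with Step~1 yields $\limsup_{n}\max_{m\in[\delta n^2,\,\delta^{-1}n^2]}p_{m,n}\le C\varepsilon^{\min(\upsilon,M)}$, i.e. the claim with $c=\min(\upsilon,M)$.

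\emph{Main obstacle.} The heart of the matter — and the reason for introducing the auxiliary trees and the increment estimates of Sections~\ref{sec:tadj}--\ref{sub:increment} — is the extraction in Step~2 in the non-spine case: one must exhibit, within the depth-first exploration after the first entry time, a sub-branching-random-walk that is simultaneously of size $\asymp n^2$, visited within the budget $(1+\lambda)m$, localized within $O(\varepsilon\sqrt n)$ of the entry point (hence of $\xi[0,n]$), and conditionally independent of $\xi$ with a distribution comparable to that of a large critical branching random walk — all with constants uniform in $m\in[\delta n^2,\delta^{-1}n^2]$. The delicate sub-case is when the entry vertex lies deep inside a macroscopic critical subtree, which is exactly where one needs~\eqref{eq:max_diam2-V+} with exponent $\zeta$ arbitrarily close to $\tfrac14$, and therefore its $\limsup_{n\to\infty}$ formulation rather than the sharper but here insufficient bound~\eqref{increment-V+}.
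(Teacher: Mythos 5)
Your proposal runs the Markov argument in the wrong direction, and this creates a gap that cannot be patched with the tools available in the paper. You put a stopping time $\tau$ on the \emph{depth-first exploration of $\widehat V_-$} and condition on $\mathcal F_\tau$, which contains the full trajectory $\xi[0,n]$. From that point you need a bound on $\mathbf P\big(V'\cap\xi[0,n]=\emptyset\mid\mathcal F_\tau\big)$, i.e.\ a statement of the form: for a typical fixed random-walk trajectory $K=\xi[0,n]$ and a fresh branching random walk $V'$ of size $\asymp n^2$ started within $O(\varepsilon\sqrt n)$ of $K$, the miss probability $\mathbf P_y(V'\cap K=\emptyset)$ is $\lesssim\varepsilon^\upsilon$. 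You propose to get this from Lemma~\ref{lem:reversed_intersection} ``in its Fubini form,'' but Lemma~\ref{lem:reversed_intersection} is a tail bound on the \emph{BRW-measurable} random variable $\max_x \rwp_x(\xi[0,\infty)\cap V_+[0,n^2]=\emptyset)$; there is no Fubini argument that converts a concentration bound for a function of the BRW into a concentration bound for the dual function of $\xi$. In fact, proving the dual estimate is essentially as hard as Proposition~\ref{p:intersection} itself — this is precisely the ``random walk trajectory is non-polar for branching capacity'' content of the theorem you are helping to establish — so the reduction is close to circular. The paper avoids this entirely by applying the strong Markov property to $\xi$ at the stopping time $\tau_{m,n}=\inf\{i:\dd(\xi_i,\brwrange^{(m)}_{\Y,\widehat V_-})\le 2\varepsilon\sqrt n\}$, keeping the BRW range fixed, and then only ever needs Lemma~\ref{lem:reversed_intersection} in its stated form, after cutting $V_\Y$ into $\varepsilon^{-1/\zeta}$ translation-invariant blocks and applying it blockwise via Lemma~\ref{l:xi-new}.

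Two further difficulties you gloss over. First, extracting a ``fresh'' size-$\asymp n^2$ sub-BRW at the non-spine entry vertex $v$ is not sound: the subtree rooted at $v$ is indeed a fresh Galton--Watson tree independent of $\mathcal F_\tau$, but it is typically much too small; and the subtrees rooted at the ancestors of $v$ have already been partially explored by time $\tau$, so conditionally on $\mathcal F_\tau$ their unexplored parts are neither fresh nor distributed as segments of $\widehat V_-$ or $V_+$ — no amount of dyadic use of \eqref{eq:max_diam2-V+} repairs the fact that the conditional law of $\widehat V_-[\tau,\tau+\lambda m]$ is not a translate of a critical BRW. Second, because you fix $\xi$ in $\mathcal F_\tau$, you have no analogue of the paper's split $p_{m,n}\le p'_{m,n}+p''_{m,n}$ into a ``late entry of $\xi$'' term and an ``early entry'' term. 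This split is what allows the paper, after stopping $\xi$, to still have $\varepsilon^r n$ fresh steps with which to hit $\widehat V_-$; without it one must separately control the contribution where the entry happens near the end of $\xi$, which your Step~3's crude cut into $\xi[0,j]$ and $\xi[j,n]$ with length $\ge n/2$ does not do — the entry time $j$ can be arbitrarily close to $0$ or $n$. In short, the plan is not a reorganization of the paper's proof but a genuinely different one, and its central step (the dual of Lemma~\ref{lem:reversed_intersection}) is missing.
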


This subsection is devoted to the proof of Proposition \ref{l:p(m,n)}. Consider   $\varepsilon \in (0, 1)$. Let $r>0$ be some small constant whose value will be determined later. We have \[
p_{m, n} \le p'_{m, n} + p''_{m, n},
\]

\noindent with 
\begin{eqnarray}     p'_{m, n}
& :=  &
\P_0\otimes \rwp_{x_n} \Big( \htbrw-{}[0, m] \cap \xi[(1-\varepsilon^r) n , n ]^{\varepsilon n^{1/2}} \not=\emptyset\Big), \label{p'mn}
\\
p''_{m, n}&:=&  \P_0\otimes \rwp_{x_n} \Big( \htbrw-{}[0, m] \cap \xi[0, (1-\varepsilon^r)n ]^{\varepsilon n^{1/2}} \not=\emptyset,   \nonumber
\\
&& \qquad \htbrw-{}[0, (1+\lambda)m] \cap \xi[0, n]=\emptyset \Big). \label{p''mn}
\end{eqnarray}

We outline the steps in estimating  $p'_{m, n}$ and $p''_{m, n}$ as follows. 
\medskip

{\bf (i) Estimate $p'_{m, n}$.}  Let $r' \in (0, r)$. According to the
random walk estimate \eqref{max-rw}, with high probability as
$\varepsilon\rightarrow 0$, $\xi[(1-\varepsilon^r)n,n]$ is contained  in $\ball(\xi_n,\varepsilon^{r'/2} n^{1/2}) $ the ball centered at $\xi_n$ with a radius of $\varepsilon^{r'/2} n^{1/2}$. Therefore, estimating $p'_{m, n}$ boils down to evaluating whether $\xi_n$ is in $\big(\htbrw-{}[0,m]\big)^{\eta n^{1/2}}$, where $\eta$ is some power of $\varepsilon$. This is the motivation for estimating $ \#\big(\htbrw-{}[0,m]\big)^{\eta n^{1/2}}$, a task addressed in Lemma \ref{l:neighbor}.  
 
\medskip
{\bf (ii) Estimate $p''_{m, n}$.} We divide $\httree-$ into two pieces $\X$ and $\Y$, as done in \eqref{def-X} and \eqref{def-Y}, and give an estimate on their ranges, see Fig.~\ref{fig:tree_XY}.    By Lemma \ref{lem:easy_Z}, there are few points in $V_\X\cap \htbrw-{}[0, m] $, then  we may replace  $\htbrw-{}$ by $V_\Y$ in $p''_{m, n}$.  This, in view of \eqref{eq:t0_trick},   boils down to estimating the corresponding probability for $V_+$. To achieve this, we will cut the range of $V_+$ into a certain number $\lfloor \varepsilon^{-1/\zeta}\rfloor $ of pieces. An application of the strong Markov property of $\xi$ allows us to reduce the problem to estimating the expectation of the maximum over $\lfloor \varepsilon^{-1\zeta}\rfloor$ of identically distributed random variables,  whose common law is that of $q_{\varepsilon, r}(n)$, where  
\begin{equation}    q_{\varepsilon, r}(n):= \max_{|x|\le  4 \varepsilon n^{1/2}, \, x\in \z^d}  \rwp_x(\xi[0,  \varepsilon^r n]\cap \tbrw+{}[0, \varepsilon n^2]=\emptyset)   1_{\{\tbrw+{}[0,\varepsilon  n^2]\subset\ball(0,\varepsilon^\zeta n^{1/2})\}},  \label{def:qrepilson}\end{equation}

\noindent with $\zeta\in (0, \frac14-\frac1{q})$ a fixed constant.  We mention that  $\rwp_x$ only computes the probability with respect to the random walk $\xi$ starting from $x$, so that $q_{\varepsilon, r}(n)$  is a {\it random variable} depending on $V_+$.  Finally, $q_{\varepsilon, r}(n)$ is estimated in the following Lemma \ref{l:xi-new}.

 \medskip

\begin{lemma} \label{l:xi-new} Let $d=5$.  Assume  \eqref{hyp-tree},  \eqref{hyp-brw}  with $q>4$ and \eqref{hyp-xi}.    Let $\zeta\in (0, \frac14-\frac1{q})$  and $q_{\varepsilon, r}(n)$ be as in \eqref{def:qrepilson} and $r\in (0, 2 \zeta)$. For any $M>0$, there exist  some $\upsilon=\upsilon_{r,\zeta}>0$ and $C=C_{r, \zeta}>0$ such that for all  $\varepsilon\in (0, 1)$,  
\begin{equation}   
\limsup_{n\to\infty}\P_0(q_{\varepsilon, r}(n) \ge \varepsilon^\upsilon) \le C\,  \varepsilon^M.\label{eq:qrepsilon}
 \end{equation} 
\end{lemma}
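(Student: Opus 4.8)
The plan is to leverage the key input, Lemma~\ref{lem:reversed_intersection}, which controls the non-intersection probability between $\xi$ and $\tbrw+{}[0, n^2]$ uniformly over starting points in a ball of radius $\varepsilon n^{1/2}$. The obstacle is that $q_{\varepsilon, r}(n)$ differs from the quantity in \eqref{reversed_intersection} in three ways: (a) the random walk $\xi$ runs only up to time $\varepsilon^r n$ rather than to $\infty$; (b) the branching random walk runs up to time $\varepsilon n^2$ rather than $n^2$; (c) there is the extra indicator $1_{\{\tbrw+{}[0,\varepsilon n^2]\subset\ball(0,\varepsilon^\zeta n^{1/2})\}}$ restricting the range of $V_+$; and the ball for the starting point $x$ has radius $4\varepsilon n^{1/2}$ rather than $\varepsilon n^{1/2}$. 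So the strategy is to rescale: set $\tilde n := \lceil\varepsilon^{1/2} n\rceil$, so that $\varepsilon n^2 \approx \tilde n^2$ and $\varepsilon^{r} n = \varepsilon^{r-1/2}\,\tilde n \cdot \varepsilon^{1/2}\cdot\dots$; more precisely $\varepsilon n^{1/2}\asymp\varepsilon^{3/4}\tilde n^{1/2}$, so the starting-point ball has radius of order $\varepsilon^{3/4}\tilde n^{1/2}\le \tilde n^{1/2}$, which is fine. After this rescaling, we are looking at $\max_{|x|\le \tilde n^{1/2}}\rwp_x(\xi[0,\varepsilon^{r-1/2}\tilde n^{3/2}\cdot(\text{const})]\cap \tbrw+{}[0,\asymp \tilde n^2]=\emptyset)$, and since $\varepsilon^{r-1/2}\tilde n^{3/2}\to\infty$ relative to $\tilde n$ (as $r<1/2 \le 2\zeta$ forces $r<1/2$, wait—we need $r<2\zeta<1/2$, so $r-1/2<0$ and the horizon $\varepsilon^r n$ dwarfs $\tilde n$ but we must compare to $\tilde n$-scale quantities), the horizon $\varepsilon^r n \gg \tilde n$ for small $\varepsilon$, so running $\xi$ only up to $\varepsilon^r n$ is not a genuine restriction relative to the BRW scale $\tilde n^2$; cutting $\xi$ short only increases the non-intersection probability, but that increase is controlled because the BRW already lives on scale $\tilde n^2 \ll \varepsilon^r n$ in time and scale $\tilde n^{1/2}$ in space, so $\xi$ has "enough time" within $\varepsilon^r n$ to hit it with the probability guaranteed by Lemma~\ref{lem:reversed_intersection}.

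Concretely, first I would apply Lemma~\ref{lem:reversed_intersection} with $n$ replaced by $\tilde n$ and with the arbitrary exponent there set to some large $M'$: there exist $\upsilon_0, C_0>0$ so that $\limsup_{\tilde n\to\infty}\P_0(\max_{|x|\le \tilde n^{1/2}}\rwp_x(\xi[0,\infty)\cap\tbrw+{}[0,\tilde n^2]=\emptyset)\ge \tilde n^{\text{-small}}) \le C_0 \tilde n^{-M'}$, i.e. in terms of $\varepsilon$, $\le C_0\varepsilon^{M'/2}$. Second, I would remove the indicator in \eqref{def:qrepilson}: on the complementary event $\{\tbrw+{}[0,\varepsilon n^2]\not\subset\ball(0,\varepsilon^\zeta n^{1/2})\}$ the random variable $q_{\varepsilon, r}(n)$ is zero by definition, so the indicator only helps and can simply be bounded by $1$; it plays no role in the \emph{upper} bound on $q_{\varepsilon,r}(n)$ and is irrelevant here (it matters only later when $q_{\varepsilon,r}$ is used inside the sum over pieces). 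Third — the real work — I would handle the truncated time horizons. For the BRW: $\tbrw+{}[0,\varepsilon n^2] \subseteq \tbrw+{}[0, C\tilde n^2]$ for a suitable constant $C$ (by choice of $\tilde n$), and enlarging the BRW range only \emph{decreases} the non-intersection probability, so this is the easy direction. For $\xi$: we have $\rwp_x(\xi[0,\varepsilon^r n]\cap\tbrw+{}\neq\emptyset)\ge \rwp_x(\xi[0,\infty)\cap\tbrw+{}\neq\emptyset) - \rwp_x(\xi[0,\infty)\cap\tbrw+{}\neq\emptyset,\ \xi[0,\varepsilon^r n]\cap\tbrw+{}=\emptyset)$, and the last term is bounded by the probability that $\xi$, started in $\ball(0,4\varepsilon n^{1/2})$, takes more than $\varepsilon^r n$ steps to reach the set $\tbrw+{}[0,\varepsilon n^2]$, which (on the event that the BRW range sits inside $\ball(0,\varepsilon^\zeta n^{1/2})$, if we keep that indicator, or unconditionally using that the relevant BRW points that matter are within distance $O(\varepsilon^{1/4}n^{1/2})$ with overwhelming probability by \eqref{max-Vc}/\eqref{increment-V+}) is a random-walk fluctuation estimate: $\xi$ needs roughly $(\varepsilon^\zeta n^{1/2})^2 = \varepsilon^{2\zeta}n$ steps to traverse that region, and since $r<2\zeta$ we have $\varepsilon^r n \gg \varepsilon^{2\zeta}n$, so by \eqref{max-rw} the overshoot probability is $\le C(\varepsilon^{r}/\varepsilon^{2\zeta})^{-1} = C\varepsilon^{2\zeta - r}$, a positive power of $\varepsilon$.

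Putting these together: on the good event (complement of the exceptional event from Lemma~\ref{lem:reversed_intersection}, which has probability $\ge 1 - C_0\varepsilon^{M'/2}$ in the limit), we have for every $x$ with $|x|\le 4\varepsilon n^{1/2}\le\tilde n^{1/2}$ that $\rwp_x(\xi[0,\infty)\cap\tbrw+{}[0,C\tilde n^2]=\emptyset)\le \tilde n^{-\text{small}}\asymp\varepsilon^{\text{small}}$, and hence $\rwp_x(\xi[0,\varepsilon^r n]\cap\tbrw+{}[0,\varepsilon n^2]=\emptyset) \le \varepsilon^{\text{small}} + C\varepsilon^{2\zeta-r}$, which is $\le \varepsilon^{\upsilon}$ for a suitable small $\upsilon=\upsilon_{r,\zeta}>0$ once $\varepsilon$ is small; for $\varepsilon$ not small the bound \eqref{eq:qrepsilon} is trivial by adjusting $C$. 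Taking the max over $x$ and then the $\limsup$ over $n$, the probability that $q_{\varepsilon,r}(n)\ge \varepsilon^\upsilon$ is bounded by the probability of the exceptional event plus the probability of the random-walk-overshoot failing, both of which are $\le C\varepsilon^{M}$ for the prescribed $M$ by choosing $M'$ large and, if needed, splitting $\upsilon$ further. I expect the main obstacle to be the bookkeeping in the time-truncation of $\xi$: one must argue carefully (using that on the support of the indicator the BRW range is confined to $\ball(0,\varepsilon^\zeta n^{1/2})$, and that the starting point is within $4\varepsilon n^{1/2}$ of the origin, so the total spatial extent $\xi$ must cover is $O(\varepsilon^\zeta n^{1/2})$) that the condition $r<2\zeta$ is exactly what makes $\varepsilon^r n$ steps more than enough, via the quadratic space-time scaling of the random walk and the estimate \eqref{max-rw}.
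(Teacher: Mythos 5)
Your overall structure matches the paper's: rescale (the paper applies Lemma~\ref{lem:reversed_intersection} with the substitution $(n',\varepsilon')=(\varepsilon^{1/2}n,4\varepsilon^{3/4})$) so as to get control of $\max_{|x|\le 4\varepsilon n^{1/2}}\rwp_x(\xi[0,\infty)\cap\tbrw+{}[0,\varepsilon n^2]=\emptyset)$, then pass from $\xi[0,\infty)$ to $\xi[0,\varepsilon^r n]$. Dropping the indicator in $q_{\varepsilon,r}$ as you suggest is fine, and the rescaling step is essentially the paper's.

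However, the time-truncation step contains a genuine gap. You write the additive decomposition
\begin{equation*}
\rwp_x\big(\xi[0,\varepsilon^r n]\cap\tbrw+{}=\emptyset\big)\le\rwp_x\big(\xi[0,\infty)\cap\tbrw+{}=\emptyset\big)+\rwp_x\big(\xi\text{ first hits }\tbrw+{}\text{ strictly after time }\varepsilon^r n\big),
\end{equation*}
and assert that the second term is bounded by $C\varepsilon^{2\zeta-r}$ "by \eqref{max-rw}". This does not follow: \eqref{max-rw} bounds $\rwp_0(\max_{i\le n}|\xi_i|\ge s\sqrt n)$, a tail on the maximal displacement, whereas the event here is a \emph{return-and-hit} event. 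Conditionally on $\xi_{\varepsilon^r n}$ being at distance of order $2\ell_n$ from the origin (with $\ell_n=\varepsilon^\zeta n^{1/2}$), the probability that $\xi$ later enters $\ball(0,\ell_n)$ is a fixed constant bounded away from $0$ and $1$ — not a power of $\varepsilon$. So without a transience/Markov-property argument, your additive error can be order $1$, and the proof fails.

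The paper sidesteps this by arguing multiplicatively. It uses the inclusion
\begin{equation*}
\{\xi[0,\infty)\cap\tbrw+{}=\emptyset\}\supseteq\{\xi[0,\varepsilon^r n]\cap\tbrw+{}=\emptyset,\,|\xi_{\varepsilon^r n}|>2\ell_n,\,\xi[\varepsilon^r n,\infty)\cap\ball(0,\ell_n)=\emptyset\},
\end{equation*}
applies the Markov property at time $\varepsilon^r n$, and bounds below by $c:=\inf_{R\ge R_0}\inf_{|y|>2R}\rwp_y(\xi[0,\infty)\cap\ball(0,R)=\emptyset)>0$ (transience in $d=5$). This yields
\begin{equation*}
q_{\varepsilon,r}(n)\le \frac1c\max_{|x|\le 4\varepsilon n^{1/2}}\rwp_x\big(\xi[0,\infty)\cap\tbrw+{}[0,\varepsilon n^2]=\emptyset\big)+\max_{|x|\le 4\varepsilon n^{1/2}}\rwp_x\big(|\xi_{\varepsilon^r n}|\le 2\ell_n\big),
\end{equation*}
where the first term is small by the rescaled Lemma~\ref{lem:reversed_intersection} (this is where $q_{\varepsilon,r}(n)$ being already small makes the constant factor $c^{-1}$ harmless), and the second is bounded via the local limit theorem \eqref{rw-local} by $C\varepsilon^{(\zeta-r/2)d}$, which is where $r<2\zeta$ is used. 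Note also that the tool is \eqref{rw-local}, not \eqref{max-rw}. Your additive route could in principle be salvaged by replacing qualitative transience with the quantitative decay of the hitting probability of $\ball(0,\ell_n)$ from distance $K\ell_n$ (of order $K^{-(d-2)}$) and optimizing over $K$, but as written your overshoot bound is unsupported.
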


\begin{proof} Only small $\varepsilon$ needs to be considered. 
Apply a change of variables 
$(n',\varepsilon')=(\varepsilon^{1/2}n, 4\varepsilon^{3/4})$ to
\eqref{reversed_intersection} in Lemma~\ref{lem:reversed_intersection}, we deduce that there are some $\upsilon', C', C''>0$ such that
\begin{align}\label{eq:known_ST}
\limsup_{n\rightarrow \infty } \P_0\pars*{\max_{|x|\le   4 \varepsilon n^{1/2}, x\in \z^5}  \rwp_x(\xi[0, \infty)\cap \tbrw +{}[0,  \varepsilon n^2]=\emptyset) \ge C' \varepsilon^{\upsilon'}} \le C'' \varepsilon^{M}.
\end{align}

Let $\ell_n=\varepsilon^\zeta n^{1/2}$. On the event $\{\tbrw+{}[0,\varepsilon  n^2]\subset\ball(0,\ell_n)\}$, we deduce from the Markov property of $\xi$ at $\varepsilon^r n$ that \begin{eqnarray*}
 &&
 \rwp_x(\xi[0, \infty)\cap \tbrw+{}[0,\varepsilon  n^2]=\emptyset)
 \\
 &\ge& 
 \rwp_x\Big(
\xi[0,\varepsilon^{r} n]\cap \tbrw+{}[0,\varepsilon n^2]=\emptyset,\,|\xi_{\varepsilon^{r}n}|>2\ell_n,
\,\xi[\varepsilon^{r}n,\infty)\cap \ball(0,\ell_n)=\emptyset\Big)
\\
&\ge&
\inf_{|y|\ge 2 \ell_n}\rwp_y\pars*{\xi[0,\infty)\cap \ball(0,\ell_n)=\emptyset}\,\,  \rwp_x\pars*{
\xi[0,\varepsilon^{r}n]\cap \tbrw+{}[0,\varepsilon n^2]=\emptyset,
\,|\xi_{\varepsilon^{r}n}|>2\ell_n}
\\
&\ge& c \,\rwp_x\pars*{
\xi[0,\varepsilon^{r}n]\cap \tbrw+{}[0,\varepsilon n^2]=\emptyset,
\,|\xi_{\varepsilon^{r}n}|>2\ell_n},
\end{eqnarray*}

\noindent for all $n$ such that $\ell_n \ge R_0$, where $R_0>0$ is a large enough constant such that  
\begin{align*}
c=\inf_{R\ge R_0}\inf_{|y|>2R}\rwp_y\pars*{\xi[0,\infty)\cap \ball(0,R)=\emptyset}>0.
\end{align*}

\noindent By the local limit theorem for the random walk \eqref{rw-local}, there is $C'''>0$ such that \begin{align*}
\max_{|x|\le 4\varepsilon n^{1/2}}\rwp_x(|\xi_{\varepsilon^{r}n}|\le 2\ell_n)
\le \rwp_0(|\xi_{\varepsilon^{r}n}|\le 3\ell_n)
\le C''' \varepsilon^{\zeta d - rd/2}.
\end{align*}

\noindent It follows that $$q_{\varepsilon, r}(n) 
\le
\frac1{c}  \max_{|x|\le   4 \varepsilon n^{1/2}}  \rwp_x(\xi[0, \infty)\cap \tbrw +{}[0,  \varepsilon n^2]=\emptyset) +  C''' \varepsilon^{\zeta d - rd/2}.$$

\noindent By \eqref{eq:known_ST}, we obtain Lemma \ref{l:xi-new} by choosing any $\upsilon\in (0, \min(\upsilon',  (\zeta   - r /2)d)).$ 
\end{proof}

We are now ready to present the proof of Proposition \ref{l:p(m,n)}.
\begin{proof}[Proof of Proposition \ref{l:p(m,n)}] As   in Lemma \ref{l:xi-new}, let $ \zeta\in (0, \frac14-\frac1{q})$ and fix some constant $r \in (0, 2\zeta)$. Recall \eqref{p'mn} and \eqref{p''mn}.  Let $\delta\in (0, \frac19)$.  It is enough to show that there is  some $c>0$ independent of $\delta$, and   some  constant $C_\delta>0$ such that  for any small $ \varepsilon >0$,  
\begin{eqnarray}    
\limsup_{n\to\infty}\max_{m \in [ \delta n^2, \frac1{\delta} n^2]} p'_{m, n} &\le&  C_\delta  \,     \varepsilon^c .   \label{estimate:p'} 
\\
\limsup_{n\to\infty}\max_{m \in [ \delta n^2, \frac1{\delta} n^2]} p''_{m, n} &\le&  C_\delta  \,     \varepsilon^c .   \label{estimate:p''}
\end{eqnarray}

We mention that the exact values of $C_\delta, c$ are not important, they may change from one paragraph to another. 

\medskip
 {\bf (i) Proof of \eqref{estimate:p'}.} 
Fix some  $0<r'<r$. Apply \eqref{max-rw} with $(n,s)=(\varepsilon^r n,\varepsilon^{(r'-r)/2})$ there, we have
\begin{equation*}
\rwp_{x_n}\pars*{\max_{(1-\varepsilon^r) n  \le k \le n } |\xi_k-\xi_n| \ge \varepsilon^{r'/2} n^{1/2} } \le C\,  \varepsilon^{r-r'}   . 
\end{equation*}

 Since $\varepsilon \le \varepsilon^{r'/2}$, we deduce  that for all $m \in [ \delta n^2, \frac1{\delta} n^2]$,   \begin{equation} 
 p'_{m, n}
  \le    C\, \varepsilon^{r-r'} + \P_0\otimes \rwp_{x_n} \Big( \xi_n \in \big(\htbrw-{}[0, m]\big)^{2\varepsilon^{r'/2}n^{1/2}}\Big) . \label{p'-1}
 \end{equation}

To estimate the above probability term, we   apply \eqref{eq:max_diam5} to $m=n^2/\delta$, $\eta= 2 \delta^{1/4} \varepsilon^{r'/2}   $  with $d=5$, $\gamma\in (0,1)$ and $M=1$,  to see that there are some $C_\delta, c_\delta>0$  (we may choose $c_\delta:= C 2^\gamma \delta^{-(d-\gamma)/4}$ and $C_\delta:= 2 C' \delta^{1/4}$ with $C, C'$ as in \eqref{eq:max_diam5})   such that   \begin{eqnarray}     && \limsup_{n\to\infty} \max_{m \in [ \delta n^2, \frac1{\delta} n^2]}\P_0\Big( 
\#\big((\htbrw-{}[0,m])^{2\varepsilon^{r'/2}n^{1/2}} \cap \z^5\big)
>
  c_\delta \, \varepsilon^{\gamma r'/2}n^{5/2}\Big) 
  \nonumber \\
  &\le& \limsup_{n\to\infty} \P_0\Big( 
\#\big((\htbrw-{}[0,\frac1{\delta} n^2])^{2\varepsilon^{r'/2}n^{1/2}} \cap \z^5\big)
>
  c_\delta \, \varepsilon^{\gamma r'/2}n^{5/2}\Big)
  \nonumber
  \\
&\le&  C_\delta \varepsilon^{  r'/2}    .
 \label{p'-2}
\end{eqnarray}

\noindent  Moreover, by \eqref{rw-local}, for any finite set $A\subset\z^5$,
\begin{equation*}
\rwp_{x_n}(\xi_n\in A) \le C\,  n^{-5/2}\#A.
\end{equation*}

\noindent It follows that  on  $\{\#\big((\htbrw-{}[0,m])^{2\varepsilon^{r'/2}n^{1/2}} \cap \z^5\big)
\le 
  c_\delta \, \varepsilon^{\gamma r'/2} \, n^{5/2}\}$, \begin{eqnarray*}  \rwp_{x_n} \Big( \xi_n \in \big(\htbrw-{}[0, m]\big)^{2\varepsilon^{r'/2}n^{1/2}}\Big)   
  \le
   C\, c_\delta\, \varepsilon^{\gamma r'/2}, \end{eqnarray*}

\noindent which in view of \eqref{p'-1} and \eqref{p'-2} yield that   for all $m \in [ \delta n^2, \frac1{\delta} n^2]$,   $$ 
 p'_{m, n}
\le  C\, \varepsilon^{r-r'} +  C_\delta \varepsilon^{ r'/2}      +   C\, c_\delta\, \varepsilon^{\gamma r'/2} + o_n(1),$$    
    \noindent with $o_n(1)$ independent of $m$ and $o_n(1) \to 0$ as $n\to\infty$. This proves \eqref{estimate:p'}.
\medskip

{\bf (ii) Proof of \eqref{estimate:p''}.} Recall \eqref{p''mn} for the definition of $p''_{m,n}$. By  \eqref{Vym},  $ \widehat  V_-[0, m]= \brwrange_{\X, \widehat V_-}^{(m)}\cup \brwrange_{\Y, \widehat V_-}^{(m)}$. We are going to apply \eqref{distanceXY} with $\eta= \varepsilon \delta^{1/4}$. Note that for all $m \in [ \delta n^2, \frac1{\delta} n^2]$, on the event $\{\max_{x\in \brwrange_{\X, \widehat V_-}^{(m)}}\dd(x, \brwrange_{\Y, \widehat V_-}^{(m)})< \varepsilon \delta^{1/4} m^{1/4} \}$, $\htbrw-{}[0, m] \cap \xi[0, (1-\varepsilon^r)n ]^{\varepsilon n^{1/2}} \not=\emptyset$ implies that $\brwrange_{\Y, \widehat V_-}^{(m)}\cap \xi[0, (1-\varepsilon^r)n ]^{2\varepsilon n^{1/2}} \not=\emptyset$. It follows from  \eqref{distanceXY} that  for all $m \in [ \delta n^2, \frac1{\delta} n^2]$ and $n$ large enough, 
\begin{equation}  p''_{m, n} 
\le p'''_{m, n}  + o_n(1),  \label{def-p'''mn} \end{equation} 

\noindent where as before, $o_n(1)$ is independent of $m$ and $o_n(1)\to1$ as $n\to\infty$, and  $$   p'''_{m, n}   :=  \P_0\otimes \rwp_{x_n} \Big( \brwrange_{\Y, \widehat V_-}^{(m)}\cap \xi[0, (1-\varepsilon^r)n ]^{2\varepsilon n^{1/2}} \not=\emptyset,  \brwrange_{\Y, \widehat V_-}^{((1+\lambda)m)} \cap \xi[0, n]=\emptyset \Big) . 
$$

Let \begin{align*}
\tau_{m,n}:=\inf\{i\ge 0: \dd(\xi_i, \brwrange_{\Y, \widehat V_-}^{(m)})\le  2\varepsilon n^{1/2}\}.
\end{align*} 

\noindent We have \begin{eqnarray}    p'''_{m, n}  &=&
\P_0\otimes \rwp_{x_n} \Big( \tau_{m, n} \le (1-\varepsilon^r)n ,  \brwrange_{\Y, \widehat V_-}^{((1+\lambda)m)} \cap \xi[0, n]=\emptyset \Big)
\nonumber
\\
&\le&
\E_0\Big(\max_{\dd(y, \brwrange_{\Y, \widehat V_-}^{(m)})\le  2\varepsilon n^{1/2}, \, y\in \z^5}  \rwp_y(\xi[0, \varepsilon^r n ]\cap \brwrange_{\Y, \widehat V_-}^{((1+\lambda)m)}=\emptyset) \Big) ,
\label{p'''mn} \end{eqnarray}

\noindent where the inequality follows from the strong Markov property of $\xi$ at $\tau_{m, n}$.  Let $$A_m:= \Big\{\#(\httree -[0, (1+\lambda) m ]\cap \X) < \lambda m/2\Big\}.$$

\noindent On $A_m$, we have \begin{equation}    \brwrange_{\Y, \widehat V_-}^{(m)} \subset V_\Y[0, m] \subset V_\Y[0, (1+\frac\lambda2)m] \subset \brwrange_{\Y, \widehat V_-}^{((1+\lambda)m)}. \label{Am2} \end{equation}

\noindent
By Lemma \ref{lem:easy_Z}, there is some positive constant $c_\delta$ such that   $$\max_{m \in [ \delta n^2, \frac1{\delta} n^2]} \P_0\Big(A_m^c \Big) \le   e^{-c_\delta n^2},$$

\noindent which, in view of \eqref{p'''mn} and \eqref{Am2}, implies that 
\begin{eqnarray*}    
p'''_{m, n} 
&\le & 
 e^{-c_\delta n^2}+
\E_0\Big(   1_{A_m}\max_{\dd(y, \brwrange_{\Y, \widehat V_-}^{(m)})\le  2\varepsilon n^{1/2}, \, y\in \z^5}  \rwp_y(\xi[0, \varepsilon^r n ]\cap \brwrange_{\Y, \widehat V_-}^{((1+\lambda)m)}=\emptyset) \Big)  
\\
&\le&
  e^{-c_\delta n^2}+
\E_0\Big(   \max_{\dd(y, V_\Y[0,m]) \le  2\varepsilon n^{1/2}, \, y\in \z^5}  \rwp_y(\xi[0, \varepsilon^r n ]\cap V_\Y[0, (1+\frac\lambda2)m]=\emptyset) \Big) .\end{eqnarray*}

Recall that $\zeta \in (0, \frac14-\frac1{q})$.  Let $$B_\Y:= \Big\{\max_{0\le k \le  \varepsilon^{-1/\zeta}} \max_{0\le j \le \varepsilon^{1/\zeta} m} | V_\Y(j+ k  
\varepsilon^{1/\zeta} m)- V_\Y(k \varepsilon^{1/\zeta} m )| < \varepsilon  n^{1/2}\Big\}.$$

On the event $ B_\Y$,   for any $y\in \z^d$ such that $\dd(y, V_\Y[0,m])
\le  2\varepsilon n^{1/2}$, there is some $0\le k <
\varepsilon^{-{1/\zeta}}$ such that $|y-V_\Y(k\varepsilon^{1/\zeta}
m)|\le 3 \varepsilon n^{1/2}< 4 \varepsilon n^{1/2}$, furthermore
\[
  \rwp_y(\xi[0, \varepsilon^r n ]\cap V_\Y[0,
  (1+\frac\lambda2)m]=\emptyset)  \le \rwp_y(\xi[0, \varepsilon^r n]\cap
  V_\Y[k\varepsilon^{1/\zeta} m,k\varepsilon^{1/\zeta} m+\lambda
  m/2]=\emptyset).
\]
It follows that\begin{equation}    \E_0\Big(   \max_{\dd(y, V_\Y[0,m]) \le  2\varepsilon n^{1/2}, \, y\in \z^5}  \rwp_y(\xi[0, \varepsilon^r n ]\cap V_\Y[0,(1+\frac\lambda2)m]=\emptyset) \Big)
\le
\P_0(B_\Y^c)+  J_{\eqref{cut-1}},  \label{cut-1} \end{equation}

\noindent with  $$J_{\eqref{cut-1}} :=  \E_0\Big(\max_{0\le k <  \varepsilon^{-1/\zeta}} \max_{|y-V_\Y(k\varepsilon^{1/\zeta} m)|\le 4 \varepsilon n^{1/2}, y\in \z^5}  \rwp_y(\xi[0, \varepsilon^r n]\cap V_\Y[k\varepsilon^{1/\zeta} m,k\varepsilon^{1/\zeta} m+\lambda m/2]=\emptyset)\Big). $$

 Applying \eqref{eq:max_diam2-Y++}  to $ \eta=\varepsilon^{1/\zeta}, r= \varepsilon \delta^{1/4}$ and $p$ sufficiently large such that $p (\frac1{4\zeta}- 1)- \frac1\zeta \ge 1$ and $p\ge  \frac{4}{\frac14-\frac1\zeta}$,  we get that uniformly in $m \in [ \delta n^2, \frac1{\delta} n^2]$, $$ \P_0(B_\Y^c) \le  C_\delta  \, \varepsilon + o_n(1),
 $$
 
 \noindent where $C_\delta:= \frac{C}{\mu(0)} \delta^{-p/4}$ in notation of \eqref{eq:max_diam2-Y++}. Therefore we have shown that uniformly in $m \in [ \delta n^2, \frac1{\delta} n^2]$, \begin{equation} p'''_{m, n} 
 \le  
 C_\delta \, \varepsilon + o_n(1)  +     J_{\eqref{cut-1}}. \label{p'''-08} \end{equation}

 For $J_{\eqref{cut-1}}$,  we deduce from \eqref{eq:t0_trick} that
\begin{equation}   
J_{\eqref{cut-1}}
\le    
 \frac 1 {\mu(0)}\E_0\Big( \max_{0\le k <  \varepsilon^{-{1/\zeta}}} U_k\Big), \label{maxUk}
 \end{equation}
where for $\varepsilon$ small enough so that $\varepsilon n^2\leq
\lambda m/2$ and for each $k$,
\begin{align*}  
U_k:=\max_{|y-\tbrw+{}(k\varepsilon^{1/\zeta} m)|\le 4 \varepsilon
  n^{1/2}, \, y\in \z^5}  \rwp_y(\xi[0, \varepsilon^r
  n]\cap\tbrw+{}[k\varepsilon^{1/\zeta} m,k\varepsilon^{1/\zeta}
  m+\varepsilon n^2]=\emptyset)  .  
\end{align*}

To estimate $\max_{0\le k <  \varepsilon^{-{1/\zeta}}} U_k$, we cut $V_+$ into smaller pieces. Consider the event $$B_{V_+}= B_{V_+}(m, n):= \Big\{\max_{0 \le i < j \le (1+\frac\lambda2)m, \, j-i \le \varepsilon    n^2}  |V_+(j)- V_+(i)| \le \varepsilon^{  \zeta} n^{1/2}\Big\}.$$

\noindent By \eqref{increment-V+},  we deduce from the union bound and  the translation invariance for $V_+$ in \eqref{eq:translational_invariance} that for some positive constant $a'$, \begin{equation}\max_{m \in [ \delta n^2, \frac1{\delta} n^2]}\P_0\big(B_{V_+}^c\big) \le C_\delta\, \varepsilon^{ a'}. \end{equation}

\noindent It follows that \begin{equation} \E_0\Big( \max_{0\le k <  \varepsilon^{-{1/\zeta}}} U_k\Big)
 \le C_\delta\, \varepsilon^{  a'} + \E_0\Big( \max_{0\le k <  \varepsilon^{-{1/\zeta}}}   U_k 1_{B_{V_+}}\Big)
\le
C_\delta\, \varepsilon^{  a'} +   \E_0\Big( \max_{0\le k <  \varepsilon^{-{1/\zeta}}} q^{(k)}_{\varepsilon, r}(n)  \Big), \label{max-Uk2}
\end{equation}

\noindent where for each $k\ge 0$,
\begin{align*}  
q^{(k)}_{\varepsilon, r}(n):=U_k 1_{\{\max_{0\le i \le  \varepsilon n^2} |V_+(k \varepsilon^{1/\zeta} m +i)- V_+(k \varepsilon^{1/\zeta} m)| \le \varepsilon^{\zeta} n^{1/2} \}}.  
\end{align*}

By \eqref{eq:translational_invariance}, $(q^{(k)}_{\varepsilon, r}(n))_{0\le k< \varepsilon^{-1/\zeta}}$ are identically distributed as $q_{\varepsilon, r}(n)$, which was defined in \eqref{def:qrepilson}.  Applying Lemma \ref{l:xi-new} to an arbitrary  $M>\frac1\zeta$,  there exists some   $\upsilon$ such that for all large $n$, 
$$ \P_0(q_{\varepsilon, r}(n)> \varepsilon^\upsilon) \le C\,
\varepsilon^{M} +o_n(1).$$
By union bounds, 
\begin{equation*} 
\P_0\pars*{\max_{0\le k <  \varepsilon^{-{1/\zeta}}} q^{(k)}_{\varepsilon, r}(n)  > \varepsilon^\upsilon}
\le \varepsilon^{-{1/\zeta}}\, \P_0(q_{\varepsilon, r}(n)> \varepsilon^\upsilon) 
 \le C\,  \varepsilon^{M-{1/\zeta}}.   
\end{equation*}  
Since $q^{(k)}_{\varepsilon, r}(n)\le 1$, we have
\begin{equation*} 
\E_0\pars*{
\max_{0\le k <  \varepsilon^{-{1/\zeta}}}q^{(k)}_{\varepsilon, r}(n)} \le  \varepsilon^\upsilon+ C\,\varepsilon^{M-{1/\zeta}} +o_n(1)   .   
\end{equation*}

\noindent Going back to \eqref{max-Uk2}, we get that $$ \E_0\Big( \max_{0\le k <  \varepsilon^{-{1/\zeta}}} U_k\Big)
 \le C_\delta\, \varepsilon^{  a'} + \varepsilon^\upsilon+ C\,\varepsilon^{M-{1/\zeta}}+o_n(1) ,$$
 
 \noindent which  together with \eqref{def-p'''mn},  \eqref{p'''-08} and \eqref{maxUk}, imply that 
 for all large $n$ and $m \in [ \delta n^2, \frac1{\delta} n^2]$, 
\begin{equation*}  
p''_{m, n} \le    o_n(1)  + C_\delta\, \varepsilon + C_\delta\, \varepsilon^{  a'}+\varepsilon^\upsilon +  C\, \varepsilon^{M-{1/\zeta}},   
\end{equation*} 
implying   \eqref{estimate:p''} and completing the proof of Proposition \ref{l:p(m,n)}. 
\end{proof}

\subsection{Proof of Proposition \ref{p:intersection}} 
\label{sec:3.4}

\begin{figure}[ht]
\centering
\begin{minipage}{.5\linewidth}
  \centering
  \includegraphics[width=.9\linewidth]{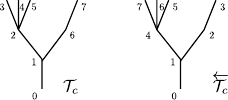}
  \caption{\scriptsize An illustration of $\ttree{c}$ and $\ottree{c}$.}
\label{fig:tree_reversed}
\end{minipage}%
\begin{minipage}{.5\linewidth}
  \centering
  \includegraphics[width=.6\linewidth]{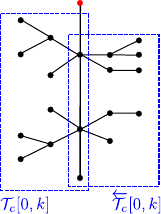}
\caption{\scriptsize
In order that the vertex on the top does not belong to $\ttree c[0,k]\cup\ottree c[0,k]$, there must be at least $2k+1-\#\ttree c$ points on the line connecting it to the root. In this figure, $k=10$.}
\label{fig:tree_overlap}
\end{minipage}
\end{figure}

As shown in Fig.~\ref{fig:tree_reversed}, 
we write $\ttree c(0), \ttree c(1),\dots$ as $\ttree c$ in its depth-first order, 
and we write  $\ottree c(0), \ottree c(1),\dots$ as the depth-first order in the reversed sense.

Observe that if a vertex does not belong to $\ttree c[0,\frac 3 5\#\ttree c]\cup\ottree c[0,\frac 3 5\#\ttree c]$, then the line connecting it to the root contains at least $\frac 1 5\#\ttree c$ points, as shown in Fig.~\ref{fig:tree_overlap}. Hence there is some positive constant $C$ such that for every $\delta>0,n\ge 1$,
\begin{equation}\label{eq:j0}
\begin{aligned}
&\mathbf P \pars*{\ttree c\not\subset \ottree c\bracks*{0,\frac 3 5\#\ttree c}\cup \ttree c\bracks*{0,\frac 3 5\#\ttree c}, \#\ttree c\in[\delta n^2,n^2/\delta]}\\
\le&\P\pars*{\text{height of $\ttree c$ is at least }\delta n^2/5}
\\
\le & C\,  \delta^{-1}n^{-2}
\end{aligned}
\end{equation}

\noindent where the last inequality follows from the classical estimate on the branching process: \begin{equation}    \P\pars*{\text{height of $\ttree c$ is at least } j} \sim \frac{2}{\sigma^2 j}, \qquad j\to \infty. \label{height-Tc} \end{equation}

For simplicity we may and will assume that the offspring distribution $\mu$ is aperiodic, as one can be easily adapt the proof line by line for the periodic case.
In particular, given aperiodicity, by Dwass \cite{dwass} and local central limit theorem (see \cite[Theorem 2.3.9]{Lawler-Limic}),
\begin{align}\label{eq:ctree} 
 n^{3/2}  \P(\#\ttree c=n) \sim \frac{1}{\sigma\sqrt{2\pi}} , \quad n \to\infty. 
\end{align}

\noindent Therefore \begin{equation} \P(\#\ttree c > j) \sim \frac{2}{\sigma \sqrt{2\pi j}}, \qquad j\to \infty. \label{total-Tc}   \end{equation}

\noindent 
 We are now ready to prove Proposition \ref{p:intersection}.
\begin{proof}[Proof of Proposition \ref{p:intersection}]    Let $\delta>0$ be small, we have
\begin{eqnarray*} 
&& I(\varepsilon, n) \le  \mathbf P \pars*{\ttree c\not\subset \ottree c\bracks*{0,\frac 3 5\#\ttree c}\cup \ttree c\bracks*{0,\frac 3 5\#\ttree c}, \#\ttree c\in[\delta n^2,n^2/\delta]}
\nonumber \\
\quad && +2\sum_{m= \delta n^2}^{n^2/\delta} \mathbf P_0\otimes \rwp_{x_n} 
\pars*{ \tbrw c{}\bracks*{0,\frac 3 5m} \cap (\xi[0,n])^{\varepsilon n^{1/2}} \not=\emptyset, \tbrw c{}\bracks*{0,{\frac 4 5}m} \cap \xi[0,n] =\emptyset, \#\ttree c=m} 
\nonumber \\
\quad && +\P(\#\ttree c> n^2/\delta) + \P_{x_n}  \big( \#\ttree c < \delta n^2 , \, \brwrange_c \cap  \ball(0,\eta |x_n|+\varepsilon n^{1/2})  \not=\emptyset\big)\nonumber
\\
&&=:  J_0(\delta,n)
+2\sum_{m= \delta n^2}^{n^2/\delta} J_1(\varepsilon, n,m) 
+   J_2(\delta, n)+ J_3(\delta, \varepsilon, n).\nonumber
\end{eqnarray*}

We have already bounded $J_0$ in \eqref{eq:j0}. 
For $J_2$, we deduce from   \eqref{total-Tc} that for some positive constant $C$, \begin{equation*}  J_2(\delta, n)= \P(\#\ttree c> n^2/\delta)  \le C\,  \delta^{1/2} n^{-1}.  \end{equation*}

 For $J_3$, when $\varepsilon<(1-\eta)|x|/4$, we have for all large $n$, $\brwrange_c \cap  \ball(0,\eta |x_n|+\varepsilon n^{1/2})  \not=\emptyset$ implies that there is some $z\in \brwrange_c$ such that $|z-x_n| \ge |x_n|- (\eta |x_n|+\varepsilon n^{1/2}) \ge \frac12 (1-\eta)|x| n^{1/2}.$ It follows that   
\begin{eqnarray*} 
J_3(\delta, \varepsilon, n)
& \le&
 \P_0 \pars*{ \max_{z\in \brwrange_c} |z| \ge \frac12 (1-\eta)|x| n^{1/2},  \#\ttree c < \delta n^2 }.
\end{eqnarray*}

\noindent By \eqref{max-Vc}, there exists some $C>0$ such that for all $m\ge 1$, $\E_0\parsof*{\max_{z\in \brwrange_ c}|z|^q}{\#\ttree c=m} \le C\,  m^{q/4} $. Using Markov's inequality and  \eqref{eq:ctree},  we have that for some positive constants $C_{\eta, x}$ and $C'_{\eta, x} $, 
\begin{align*}  
J_3(\delta, \varepsilon, n)
=&
\sum_{m=1}^{\delta n^2}\P_0(\#\ttree c=m)\P_0\parsof*{\max_{z\in \brwrange_c}|z|\ge \frac12 (1-\eta)|x| n^{1/2}}{\#\ttree c=m}\\
\le & C_{\eta, x} n^{-q/2} \, \sum_{m=1}^{\delta n^2} m^{-3/2}
 \E_0\big(\max_{z\in \brwrange_c}|z|^{q} \, |\, \#\ttree c=m\big)
 \\
\le  &  C'_{\eta, x}  \, \delta^{1/2}\, n^{-1}.  
\end{align*} 
 
Finally, by taking $k=4m/5$ in \eqref{eq:absolutecontinuity2}, we can find some universal constant $C>0$ such that $J_1(\varepsilon, n, m)$ is bounded above by
\begin{align*}
C\, \P_0\otimes \rwp_{x_n} \pars*{\htbrw-{}[0, {3m}/{5}] \cap (\xi[0,n])^{\varepsilon n^{1/2}} \not=\emptyset,  \htbrw-{}[0, {4m}/{5}] \cap \xi[0,n] =\emptyset }\P(\#\ttree c=m).
\end{align*}
Then by Proposition \ref{l:p(m,n)}, we deduce that for some positive constants $c$,   $C_\delta$ and $C'_\delta$ such that  for all $\delta n^2\le m \le n^2/\delta$, 
\[
J_1(\varepsilon, n, m)
\le (C_\delta\,   \varepsilon^c +o_n(1))\, \P(\#\ttree c=m)  ,
\]
hence by  \eqref{eq:ctree}
\[
\sum_{m= \delta n^2}^{n^2/\delta} J_1(\varepsilon, n, m)  
\le (C_\delta\,   \varepsilon^c+ o_n(1))\, \P(\#\ttree c \ge \delta n^2)
\le   C'_\delta\, ( \varepsilon^c\,  + o_n(1))\, n^{-1} .
\]

Combine the above estimates, we see that  \begin{align*}
\limsup_{n\to \infty} n\, \Big(J_0(\delta,n)
+2\sum_{m= \delta n^2}^{n^2/\delta} J_1(\varepsilon, n,m) 
+   J_2(\delta, n)+ J_3(\delta, \varepsilon, n)\Big) 
\le C\,  \delta^{1/2}+C'_{\eta, x}  \, \delta^{1/2} + C'_\delta\,  \varepsilon^c,
\end{align*}
and we conclude Proposition \ref{p:intersection} by letting $\varepsilon\to0$ then $\delta\to 0$.\end{proof}

We end the paper by a remark on the condition $\{\xi[0,n]\subset\ball\pars*{0,\eta{|x_n|}}\}$ in Proposition \ref{p:intersection}:

\begin{remark}  \label{r:B-eta}  
Without the condition $\xi[0,n]\subset\ball\pars*{0,\eta{|x_n|}}$, Proposition \ref{p:intersection} is no longer true. In fact, we have $\brwrange_c=\{x_n\}$ with probability $\mu(0)>0$, then 
\begin{align*}
&\,n\,\mathbf P_{x_n}\otimes \rwp_0 \pars*{ \brwrange_c \cap \cnbd{(\xi[0,n])}{{\varepsilon \sqrt n}} \not=\emptyset, \brwrange_c \cap \xi[0,n] =\emptyset
}\\
&\ge   \, \mu(0)  \,n\,\rwp_0 \pars*{ \{x_n\} \cap \cnbd{(\xi[0,n])}{{\varepsilon \sqrt n}} \not=\emptyset, \{x_n\} \cap \xi[0,n] =\emptyset}\\
&\ge   \,\mu(0)  \, n\,\big(\rwp_0 \pars*{ \ball(x\sqrt n,\varepsilon\sqrt n) \cap \xi[0,n] \not=\emptyset}-\rwp_0 \pars*{\{x_n\} \cap \xi[0,n] \neq \emptyset}\big)
\\
& \ge   \mu(0)  \, c_\varepsilon\, n 
\end{align*}
which diverges as $n\to\infty$, where the last inequality follows from the facts that $c_\varepsilon:=\liminf_{n\to\infty} \rwp_0 \pars*{ \ball(x\sqrt n,\varepsilon\sqrt n) \cap \xi[0,n] \not=\emptyset} >0$ and $\lim_{n\to\infty}\rwp_0 \pars*{\{x_n\} \cap \xi[0,n] \neq \emptyset}= 0$.  
\end{remark}

\end{document}